\documentclass[reqno,a4paper,11pt]{amsart}
\parindent=15pt
\parskip=3pt
\setlength{\textwidth}{7in}
\setlength{\oddsidemargin}{-24pt}
\setlength{\evensidemargin}{-24pt}
\setlength{\textheight}{9.2in}
\setlength{\topmargin}{-5pt}

\usepackage[ngerman, english]{babel}
\usepackage[all,poly]{xy}
\usepackage[mathcal]{eucal}
\usepackage{enumerate}
\usepackage{amsmath,amssymb, amsthm, amsfonts}
\usepackage[pagebackref,colorlinks]{hyperref}
\usepackage{graphicx}
\usepackage{mathabx}

\theoremstyle{plain}
\newtheorem {lemma}{Lemma}
\newtheorem {proposition}[lemma]{Proposition}
\newtheorem {theorem}[lemma]{Theorem}
\newtheorem {SCT}[lemma]{Sandwich Classification Theorem}
\newtheorem {corollary}[lemma]{Corollary}

\theoremstyle{definition}
\newtheorem{definition}[lemma]{Definition}

\newtheorem*{remark}{Remark}

\newcommand{\N}{\mathbb{N}}
\DeclareMathOperator*{\plus}{\overset{.}{\bigplus}}
\newcommand{\+}{\overset{.}{+}}
\newcommand{\minus}{\overset{.}{-}}
\newcommand{\h}{\mathfrak{H}}
\newcommand{\hp}{\leftharpoonup}
\newcommand{\tr}{\operatorname{tr}}
\newcommand{\K}{\mathfrak{L}}
\newcommand{\M}{\mathfrak{M}}

\newcommand{\GL}{\operatorname{GL}}

\newcommand{\E}{\operatorname{E}}
\newcommand{\C}{\operatorname{C}}
\newcommand{\U}{\operatorname{U}}
\newcommand{\EU}{\operatorname{EU}}

\newcommand{\NU}{\operatorname{NU}}
\newcommand{\CU}{\operatorname{CU}}

\newcommand{\Center}{\operatorname{Center}}

\newcommand{\Mat}{\operatorname{M}}

\newcommand{\ind}{\operatorname{ind}}
\newcommand{\V}{\mathcal{V}}
\newcommand{\Hy}{\mathcal{H}}
\newcommand{\I}{\mathfrak{I}}
\newcommand{\J}{\mathfrak{J}}
\DeclareFontEncoding{LS1}{}{}
\DeclareFontSubstitution{LS1}{stix}{m}{n}
\DeclareSymbolFont{symbols2}{LS1}{stixfrak} {m} {n}
\DeclareMathSymbol{\operp}{\mathbin}{symbols2}{"A8}

\title[The E-normal structure of Petrov's odd unitary groups]{The E-normal structure of Petrov's odd unitary groups over commutative rings}
\author{Raimund Preusser}
\address{Department of Mathematics,
University of Brasilia, Brazil}
\email{raimund.preusser@gmx.de}
\subjclass[2010]{20G35, 20H25} 
\keywords{odd unitary groups, E-normal subgroups, sandwich classification}

\begin{document}

\begin{abstract}
For an odd quadratic space $\V$ of Witt index $\geq 3$ over a commutative ring with pseudoinvolution, we classify the subgroups of the odd unitary group $\U(\V)$ that are normalized by the elementary subgroup $\EU_{(e_1,e_{-1})}(\V)$ defined by a hyperbolic pair $(e_1,e_{-1})$ in $\V$. Further we correct some minor mistakes that exist in the literature on odd unitary groups.
\end{abstract}

\maketitle

\section{Introduction}
In the second half of the 19th century mathematicians became interested in the normal subgroups of classical groups, at first over finite fields and then over infinite fields. The case of finite fields was of special interest because normal subgroups here yielded concrete examples of finite nonabelian simple groups, which were important for Galois theory. In 1870, C. Jordan \cite{jordan} showed that the normal subgroups of the general linear group $\GL_n(\mathbb{F}_p)$, where $\mathbb{F}_p$ is a finite field of prime order $p$, are precisely the overgroups of the elementary subgroup $\E_n(\mathbb{F}_p)$ and the subgroups of $\Center(\GL_n(\mathbb{F}_p))$. In 1900, L. E. Dickson \cite{dickson, dickson_book} extended Jordan's result to all fields and further to all classical groups. In the period 1948-1963, J. Dieudonne \cite{dieudonne_book_1,dieudonne_1,dieudonne_2,dieudonne_book_2} generalized Dickson's results to division rings. 

In 1961, W. Klingenberg \cite{klingenberg_1} proved that if $R$ is a commutative local ring, $n\geq 3$, and $H$ is a subgroup of $\GL_n(R)$, then 
\begin{equation}
H\text{ is normal}~\Leftrightarrow~ \exists \text{ ideal }I:~\E_n(R,I)\subseteq H\subseteq \C_n(R,I)\end{equation}
where $E_n(R,I)$ denotes the elementary subgroup of level $I$ and $\C_n(R,I)$ the full congruence subgroup of level $I$. If $R$ is a field or division ring one gets back the results of Dickson resp. Dieudonne. Klingenberg proved analog results for the symplectic and the orthogonal groups \cite{klingenberg_2,klingenberg_3}. In \cite{klingenberg_4}, Klingenberg tried to generalize (1) to arbitrary (not necessarily commutative) local rings $R$. 

In 1964, H. Bass \cite{bass} noticed that Klingenberg had made a mistake. What Klingenberg really had shown in \cite{klingenberg_4} was that if $R$ is a local ring, $n\geq 3$, and $H$ is a subgroup of $\GL_n(R)$, then
\begin{equation}
H\text{ is E-normal}~\Leftrightarrow~ \exists \text{ ideal }I:~\E_n(R,I)\subseteq H\subseteq \C_n(R,I).\end{equation}
Recall that a subgroup of a classical group $G$ is called {\it E-normal} if it is normalized by the elementary subgroup of $G$. Further Bass showed that (2) holds true if $R$ is an arbitrary ring and $n$ is large enough with respect to the stable rank of $R$, including the case that $R$ is semilocal and $n\geq 3$. From that moment on the focus of research changed. It was now on the E-normal structure of classical and classical-like groups instead of the normal structure. Amazingly, it seems that today it is still unknown what the normal subgroups of the general linear group over a commutative ring are (while a description of the E-normal subgroups is available, see the next paragraph).    

In 1972, J. S. Wilson \cite{wilson} proved that (2) holds if $R$ is commutative and $n\geq 4$. In 1973,  I. Z. Golubchik \cite{golubchik} added the case $n=3$. In 1981, L. Vaserstein \cite{vaserstein} showed that (2) holds if $R$ is almost commutative (i.e. module finite over its center) and $n\geq 3$.

A natural question is if similar results hold for other classical groups. In 1969, A.
Bak \cite{bak_2} defined hyperbolic unitary groups $\U_{2n}(R,\Lambda)$ which include as special cases the classical Chevalley groups of type $C_m$ and $D_m$. He proved that if $H$ is a subgroup of $\U_{2n}(R,\Lambda)$, then
\begin{align}
H \text{ is E-normal}~\Leftrightarrow~ \exists\text{ form ideal } (I,\Gamma):\EU_{2n}((R,\Lambda),(I,\Gamma)) \subseteq H\subseteq \CU_{2n}((R,\Lambda),(I,\Gamma))
\end{align}
if $n$ is large enough with respect to the Bass-Serre dimension of $R$. In 1995, L. N. Vaserstein and H. You \cite{vaserstein-you} gave an incorrect proof of (3) for the case that $R$ is almost commutative and $n\geq 3$. The proof can be repaired when $2$ is invertible in $R$. In 2012, H. You \cite{you} proved that (3) holds if $R$ is commutative and $n\geq 4$ and one year later H. You and X. Zhou \cite{you-zhou} added the case that $n=3$. In 2017, the author \cite{preusser} proved that (3) holds if $R$ is almost commmutative and $n\geq 3$.

In 2005, V. A. Petrov \cite{petrov} defined odd unitary groups $\U(\V)$. These groups are very general, they include as special cases all the classical Chevalley groups, Bak's hyperbolic unitary groups and the Hermitian groups of G. Tang \cite{tang}. Recently Bak and the author \cite{bak-preusser} studied a subclass of Petrov's odd unitary groups, big enough to contain all the classical Chevalley groups, and classified the E-normal subgroups of the members of this subclass. Also recently W. Yu, Y. Li and H. Liu \cite{yu-li-liu} studied the E-normal subgroups of odd unitary groups under a $\Lambda$-stable rank condition. 

In this paper we investigate the E-normal subgroups of Petrov's odd unitary groups. The main result is the following: Let $\V$ be an odd quadratic space of Witt index $\geq 3$ over a commutative ring with pseudoinvolution. Choose three mutually orthogonal hyperbolic pairs $(e_1,e_{-1}),(e_2,e_{-2}),(e_3,e_{-3})$ in $\V$ and let $\V_0$ be the orthogonal complement of the odd quadratic space spanned by $e_1,e_2,e_3,e_{-3},e_{-2},e_{-1}$. Then the odd unitary group $\U(\V)$ is isomorphic to the odd hyperbolic unitary group $\U_{6}(\V_0)$ see \cite[p. 4756]{petrov}. We prove that if $H$ is a subgroup of $\U_{6}(\V_0)$, then
\begin{equation}
H \text{ is E-normal}~\Leftrightarrow~\exists\text{ odd form ideal } \I_0:\EU_{6}(\V_0,\I_0)\subseteq H\subseteq \CU_{6}(\V_0,\I_0).
\end{equation}
A priori, the notion of E-normality depends on the choice of the hyperbolic pairs $(e_1,e_{-1}),(e_2,e_{-2}),(e_3,e_{-3})$. Petrov has shown that it depends only on the choice of one hyperbolic pair, see \cite[Section 7]{petrov}. 

Furthermore we correct a couple of small mistakes that have been made in \cite{petrov} and later were  repeated in \cite{yu-tang} and \cite{yu-li-liu}. See the remarks in Sections 3 and 4.

The rest of the paper is organized as follows. In Section 2 we recall some standard notation
which will be used throughout the paper. In Section 3 we recall the definitions of Petrov's odd unitary groups and some important subgroups. In Section 4 we recall the definitions of the odd hyperbolic unitary groups and some important subgroups. In Section 5 we prove (4).

\section{Notation}
$\N$ denotes the set of positive integers and $\N_0$ the set of nonnegative integers. If $G$ is a group and $g,h\in G$, we let $^hg:=hgh^{-1}$ and $[g,h]:=ghg^{-1}h^{-1}$. By a ring we will always mean an associative ring with $1$ such that $1\neq 0$. Ideal will mean two-sided ideal. If $x_1,\dots,x_n$ are elements of a ring $R$, then we denote by $I(x_1,\dots,x_n)$ the ideal of $R$ generated by the $x_i$'s. The set of all $n\times n$ matrices over a ring $R$ is denoted by $\Mat_n(R)$.

\section{Petrov's odd unitary groups}
\subsection{Odd unitary groups}
Throughout Section 3, $R$ denotes a ring and $~\bar{}~$ a pseudoinvolution on $R$. Recall that a {\it pseudoinvolution} on $R$ is an additive map $~\bar{}: R\rightarrow R$, $x\mapsto \bar x$ such that $\overline{xy}=\bar y \bar 1^{-1}\bar x$ and $\bar{\bar{x}}=x$ for any $x,y\in R$.
\begin{definition}
Let $V$ be a right $R$-module. A biadditive map $B:V\times V\rightarrow R$ such that $B(vx,wy)=\bar x \bar 1^{-1}B(v,w) y$ for any $v,w\in V$ and $x,y\in R$ is called a {\it sesquilinear form on $V$}. A sesquilinear form $B$ on $V$ is called an {\it anti-Hermitian form on $V$} if $B(v,w)=-\overline{B(w,v)}$ for any $v,w\in V$. If $B$ is an anti-Hermitian form on $V$, then the pair $(V,B)$ is called an {\it anti-Hermitian space}.
\end{definition}

\begin{definition}
Let $(V,B)$ be an anti-Hermitian space. The {\it Heisenberg group $\h$ of $(V,B)$} is defined as follows. As a set $\h=V\times R$. The composition is given by
\[(v,x)\+(w,y)=(v+w,x+y+B(v,w)).\]
The inverse $\minus (v,x)$ of an element $(v,x)\in\h$ is given by $\minus(v,x)=(-v,-x+B(v,v))$.
\end{definition}

\begin{definition}
Let $(V,B)$ be an anti-Hermitian space. Define a right action $\hp$ of the multiplicative monoid $(R,\cdot)$ on $\h$  by 
\[(v,x)\hp y=(vy,\bar y\bar 1^{-1}xy).\]
This action has the property that $(\alpha\+\beta)\hp y=\alpha\hp y\+ \beta\hp y$ for any $\alpha, \beta\in \h$ and $y\in R$. \\
\end{definition}

\begin{definition}
Let $(V,B)$ be an anti-Hermitian space. The {\it trace map} $\tr:\h\rightarrow R$ is defined by 
\[\tr(v,x)=x-\bar x-B(v,v).\]
One checks easily that $\tr$ is a group homomorphism from $\h$ to $(R,+)$.
\end{definition}

\begin{definition}
Let $(V,B)$ be an anti-Hermitian space. Set 
\begin{align*}
\K_{\min}&:=\{(0,x+\bar x)\mid x\in R\},\\
\K_{\max}&:=\ker\tr.
\end{align*}
A subgroup $\K$ of $\h$ that is stable under $\hp$ and lies between $\K_{\min}$ and $\K_{\max}$ is called an {\it odd form parameter}. Since $\K_{\min}$ and $\K_{\max}$ are subgroups of $\h$ that are stable under $\hp$, they are the smallest resp. the largest odd form parameter. 
\end{definition}

One checks easily that $\K_{\min}$ contains the commutator subgroup of $\h$. Hence any odd form parameter is a normal subgroup of $\h$.
\begin{definition}\label{defoqs}
Let $(V,B)$ be an anti-Hermitian space. Define the map 
\begin{align*}
Q:V&\rightarrow \h\\
v&\mapsto (v,0)
\end{align*}
Obviously
\begin{align*}
&Q(vx)=Q(v)\hp x~\forall v\in V, x\in R\text{ and}\\
&Q(v+w)=Q(v)\+Q(w)\minus(0,B(v,w))~\forall v,w\in V.
\end{align*}
If $\K$ is an odd form parameter, then the map $Q_\K:V\rightarrow \h/\K$ induced by $Q$ is called the {\it odd quadratic form on $V$ defined by $\K$}. The triple $\V:=(V, B, Q_\K)$ is called an {\it odd quadratic space}. 
\end{definition}

\begin{remark}
In \cite{petrov} the odd quadratic form $Q_\K$ is not explicitly mentioned, but it appears implicitly in the definition of an odd unitary group. Petrov calls the pair $q=(B,\K)$ an odd quadratic form and the pair $(V,q)$ and odd quadratic space.
\end{remark}

Until the end of Section 4, $\V=(V, B, Q_\K)$ denotes an odd quadratic space. 
\begin{definition}
The group 
\[\U(\V)=\{\sigma\in \GL(V)\mid B(\sigma v,\sigma w)=B(v,w)\land Q_\K(\sigma v)=Q_\K(v)~\forall v,w\in V\}\]
is called the {\it odd unitary group of $\V$}.
\end{definition}

In the following we will denote the inverse of an element $\sigma\in \U(\V)$ by $\widetilde \sigma$ instead of $\sigma^{-1}$ in order to get nicer-looking formulas.
 
\subsection{Eichler-Siegel-Dickson transvections}
\begin{definition}
Two elements $u,v\in V$ are called {\it orthogonal} if $B(u,v)=0$. An element $w\in V$ is called {\it isotropic} if $Q_\K(w)=0$. 
\end{definition}
\begin{definition}
Let $u\in V$ be an isotropic element and $(v,x)\in \K$ such that $u$ and $v$ are orthogonal. Then the endomorphism $T_{uv}(a)$ of $V$ mapping
\[w\mapsto w+u\bar 1^{-1}(B(v,w)+xB(u,w))+vB(u,w)\]
is called an {\it Eichler-Siegel-Dickson transvection} or an {\it ESD transvection}.
\end{definition}
\begin{lemma}\label{lemesd}
The ESD transvections lie in $\U(\V)$. Furthermore
\begin{align*}
&T_{uv_1}(x)T_{uv_2}(y)=T_{u,v_1+v_2}(x+y+B(v_1,v_2))\text{ and}\\
&^\sigma\!T_{u,v}(x)=T_{\sigma u,\sigma v}(x)\text{ for any }\sigma \in \U(\V).
\end{align*}
\end{lemma}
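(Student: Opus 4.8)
The plan is to establish the two displayed identities first; invertibility of $T_{uv}(x)$ — hence membership in $\GL(V)$, which is part of membership in $\U(\V)$ — then falls out of the composition formula, and afterwards one checks directly that $T_{uv}(x)$ preserves $B$ and $Q_\K$. Before starting I would record three elementary facts. First, if $u$ is isotropic then $Q(u)=(u,0)\in\K\subseteq\ker\tr$, so $0=\tr(u,0)=-B(u,u)$, i.e. $B(u,u)=0$; moreover $Q(u)\hp a\in\K$ for every $a\in R$ since $\K$ is $\hp$-stable. Second, if $u\perp v$ then $B(v,u)=-\overline{B(u,v)}=0$ as well. Third, for $\sigma\in\U(\V)$ the vector $\sigma u$ is again isotropic, $\sigma u\perp\sigma v$, and $(\sigma v,x)\in\K$ whenever $(v,x)\in\K$: indeed $Q_\K(\sigma v)=Q_\K(v)$ yields $k:=\minus Q(v)\+Q(\sigma v)\in\K$, and since every element $(0,y)$ is central in $\h$ one gets $(\sigma v,x)=Q(\sigma v)\+(0,x)=Q(v)\+k\+(0,x)=(v,x)\+k\in\K$. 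These remarks guarantee that all the ESD transvections written below really are ESD transvections.

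For the composition formula I would apply $T_{uv_2}(y)$ and then $T_{uv_1}(x)$ to a vector $w\in V$. Using $B(u,u)=0$, $B(u,v_i)=B(v_i,u)=0$ and the sesquilinearity of $B$, one obtains $B(u,T_{uv_2}(y)w)=B(u,w)$ and $B(v_1,T_{uv_2}(y)w)=B(v_1,w)+B(v_1,v_2)B(u,w)$; substituting and grouping the coefficients of $u$, $v_1$ and $v_2$ gives precisely $T_{u,v_1+v_2}(x+y+B(v_1,v_2))w$, the parameter being admissible because $(v_1+v_2,\,x+y+B(v_1,v_2))=(v_1,x)\+(v_2,y)\in\K$. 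Specializing $(v_2,y)=\minus(v,x)$ gives $T_{uv}(x)\,T_{u,-v}(-x+B(v,v))=T_{u,0}(0)=\operatorname{id}_V$, and the same on the other side, so $T_{uv}(x)\in\GL(V)$.

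Next, write $T_{uv}(x)w=w+z$ with $z=u\bar 1^{-1}a+vb$, $a=B(v,w)+xB(u,w)$, $b=B(u,w)$. Expanding $B(w+z,w'+z')$ by biadditivity and sesquilinearity, every correction term cancels once one uses $B(u,u)=0$, $B(u,v)=B(v,u)=0$, the anti-Hermitian law and the pseudoinvolution identity $\overline{xy}=\bar y\bar 1^{-1}\bar x$ (which gives, e.g., $\overline{\bar 1^{-1}a}\,\bar 1^{-1}=\bar a$); hence $B$ is preserved. For the odd quadratic form, a short computation with $Q(a+b)=Q(a)\+Q(b)\minus(0,B(a,b))$ shows that $Q_\K(w+z)=Q_\K(w)$ is equivalent to $(z,-B(w,z))\in\K$. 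Using in addition $Q(va)=Q(v)\hp a$ and $u\perp v$ one rewrites
\[(z,-B(w,z))=\bigl(Q(u)\hp(\bar 1^{-1}a)\+(v,x)\hp b\bigr)\+\bigl(0,\ \bar cb+\overline{\bar cb}\bigr),\qquad c:=B(v,w),\]
where the first summand lies in $\K$ by $\hp$-stability together with $Q(u)\in\K$ and $(v,x)\in\K$, and the second lies in $\K_{\min}\subseteq\K$. Therefore $T_{uv}(x)\in\U(\V)$.

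Finally, for $\sigma\in\U(\V)$ I would compute $(\sigma\,T_{uv}(x)\,\widetilde\sigma)(w)=\sigma\bigl(\widetilde\sigma w+u\bar 1^{-1}(B(v,\widetilde\sigma w)+xB(u,\widetilde\sigma w))+vB(u,\widetilde\sigma w)\bigr)$; by $R$-linearity of $\sigma$ and the relations $B(u,\widetilde\sigma w)=B(\sigma u,w)$, $B(v,\widetilde\sigma w)=B(\sigma v,w)$ this collapses to $T_{\sigma u,\sigma v}(x)w$, a genuine ESD transvection by the third preliminary remark. I expect the $Q_\K$-invariance to be the main obstacle: one must work modulo $\K$ inside the non-abelian group $\h$ and recognize every correction term as an $\hp$-translate of one of the elements $Q(u)$, $(v,x)$ already known to lie in $\K$, up to a term in $\K_{\min}$. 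The unusual form of the pseudoinvolution (e.g. $\overline{\bar cb}=\bar b\bar 1^{-1}c$) makes this bookkeeping delicate, and it is crucial here that $u$ be genuinely isotropic and that the whole pair $(v,x)$, not merely the condition $v\perp u$, belong to $\K$.
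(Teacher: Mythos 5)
Your proof is correct and is essentially the argument the paper itself omits: Lemma \ref{lemesd} is justified there only by a reference to Section 4 of Petrov's paper, and your direct verification (the composition formula, invertibility via the choice $(v_2,y)=\minus(v,x)$, preservation of $B$, and the decomposition $(z,-B(w,z))=\bigl(Q(u)\hp(\bar 1^{-1}a)\+(v,x)\hp b\bigr)\+\bigl(0,\bar c b+\overline{\bar c b}\bigr)$ with the first summand in $\K$ by $\hp$-stability and the second in $\K_{\min}$) is exactly that standard computation, and your identities $\overline{\bar 1^{-1}a}\,\bar 1^{-1}=\bar a$ and $\overline{\bar c b}=\bar b\,\bar 1^{-1}c$ check out. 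One small point: in the $B$-preservation step the cancellation leaves the residual term $\bar b\,\bar 1^{-1}(\bar x-x+B(v,v))\,b'$ (with $b=B(u,w)$, $b'=B(u,w')$), so in addition to the identities you list you must invoke $(v,x)\in\K\subseteq\ker\tr$, i.e.\ $x-\bar x-B(v,v)=0$ --- which is consistent with your closing remark that membership of the whole pair $(v,x)$ in $\K$ is essential.
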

\begin{proof}
See \cite[Section 4]{petrov}.
\end{proof}
\subsection{Principal congruence subgroups}
The submodule $V^{ev}:=\{v\in V\mid (v,x)\in \K\text{ for some }x\in R\}$ of $V$ is called the {\it even part of $V$}. We denote the Heisenberg group of the anti-Hermitian space $(V^{ev}, B|_{V^{ev}})$ by $\h^{ev}$. It is easy to see that any $\sigma \in \U(\V)$ restricts to an automorphism of $V^{ev}$ (in fact any $\sigma \in \U(\V)$ has the stronger property that $(\sigma v,x)\in \K$ for any $(v,x)\in\K$).
\begin{definition}\label{defrelfp}
Let $I$ be an ideal of $R$ such that $\bar I=I$. Set 
\begin{align*}
\M_{\min}&:=\K\hp I\+\{(0,x+\bar x)\mid x\in I\},\\
\M_{\max}&:=\{(v,x)\in \K\mid x\in I, B(v,w)\in I~\forall w\in V^{ev}\}.
\end{align*}
A subgroup $\M$ of $\h$ that is stable under $\hp$ and lies between $\M_{\min}$ and $\M_{\max}$ is called an {\it relative odd form parameter of level $I$}. Since $\M_{\min}$ and $\M_{\max}$ are subgroups of $\h$ that are stable under $\hp$, they are the smallest resp. the largest relative odd form parameter of level $I$. If $\M$ is a relative odd form parameter of level $I$, then the pair $\I=(I,\M)$ is called an {\it odd form ideal}. 
\end{definition}

\begin{remark}
In \cite{petrov}, $\M_{\max}$ was defined as $\M_{\max}=\{(u, a)\in\K \mid a\in I, B_0(u, v)\in I\text{ for any } v\in V_0\}$. Obviously the zeros should be removed. Further $V$ should be replaced by $V^{ev}$ because otherwise it is not clear that the level of an E-normal subgroup as defined in Definition \ref{deflevel} is an odd form ideal. In an e-mail to the author Petrov confirmed that the definition of $\M_{\max}$ in Definition \ref{defrelfp} above is correct. Unfortunately, Petrov's definition of $\M_{\max}$ was copied in \cite{yu-tang} and \cite{yu-li-liu}. In \cite{yu-li-liu} this led to an unnatural definition of the level of an E-normal subgroup and as a consequence to a very strange looking main result.
\end{remark}

One checks easily that any relative odd form parameter $\M$ is a normal subgroup of $\h^{ev}$. The map $Q_{\M}:V^{ev}\rightarrow \h^{ev}/\M$ induced by the map $Q$ defined in Definition \ref{defoqs} is called the {\it relative odd quadratic form on $V^{ev}$ defined by $\M$}.
\begin{definition}
Let $\I=(I,\M)$ be an odd form ideal. The subgroup
\[\U(\V, \I)=\{\sigma\in\U(\V)\mid Q_{\M}(\sigma v)=Q_{\M}(v)~\forall v\in V^{ev}\}\]
of $\U(\V)$ is called the {\it principal congruence subgroup of level $\I$}. The normalizer of $\U(\V, \I)$ in $\U(\V)$ is called the {\it normalized principal congruence subgroup of level $\I$} and is denoted by $\NU(\V, \I)$.
\end{definition}

\begin{remark}
In \cite{petrov}, the principal congruence subgroup of level $\I$ was defined as the subgroup of $\GL(V)$ consisting of all elements $g$ such that $(gv - v, (v - gv, v)_q)$ lies in $\M$ for every $v\in V$. Obviously $\GL(V)$ should be replaced by $\U(V,q)$. Further ``for every $v\in V$" should be replaced by ``for every $v\in V^{ev}$" (Petrov agreed on that). The reason for that is, that otherwise it is not clear that in the hyperbolic case the relative elementary subgroup of a certain level is contained in the principal congruence subgroup of the same level.
\end{remark}

\begin{definition}\label{defaction}
Let $\sigma\in \U(\V)$. If $\I=(I,\M)$ is an odd form ideal, set $^{\sigma}\I:=(I,{}^{\sigma}\M)$ where $^{\sigma}\M:=\{(\sigma v,x)\mid (v,x)\in \M\}$. This defines a left action of $\U(\V)$ on the set of all form ideals.
\end{definition}
\begin{proposition}\label{proptild}
Let $\sigma\in \U(\V)$ and $\I$ an be an odd form ideal. Then $^{\sigma}\!\U(\V, \I)=\U(\V, {}^{\sigma}\I)$.
\end{proposition}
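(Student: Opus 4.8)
The plan is to exhibit both $^{\sigma}\U(\V,\I)$ and $\U(\V,{}^{\sigma}\I)$ as the two sides of a single transport-of-structure identity. First I would introduce the map $\bar\sigma\colon\h^{ev}\to\h^{ev}$, $(v,x)\mapsto(\sigma v,x)$. Since $\sigma$ restricts to an automorphism of $V^{ev}$, this is well defined; since $B(\sigma v,\sigma w)=B(v,w)$ it respects the Heisenberg composition, so $\bar\sigma$ is a group automorphism of $\h^{ev}$; and from $(v,x)\hp y=(vy,\bar y\bar 1^{-1}xy)$ one checks $\bar\sigma(\alpha\hp y)=\bar\sigma(\alpha)\hp y$ for all $\alpha\in\h^{ev}$, $y\in R$. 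Using the ``stronger property'' recorded just before Definition \ref{defrelfp} (that $\sigma$ maps $\K$ into $\K$) together with the surjectivity of $\sigma|_{V^{ev}}$, one sees that $\bar\sigma$ carries $\M_{\min}$ onto $\M_{\min}$ and $\M_{\max}$ onto $\M_{\max}$; hence $\bar\sigma$ sends every relative odd form parameter of level $I$ to another one, and in particular $\bar\sigma(\M)={}^{\sigma}\M$, which re-proves that $^{\sigma}\I$ is an odd form ideal as asserted in Definition \ref{defaction}. Consequently $\bar\sigma$ descends to a group isomorphism $[\bar\sigma]\colon\h^{ev}/\M\to\h^{ev}/{}^{\sigma}\M$.

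Next I would record the intertwining identity $[\bar\sigma]\circ Q_\M=Q_{{}^{\sigma}\M}\circ\sigma$ on $V^{ev}$, which is immediate from $\bar\sigma\circ Q=Q\circ\sigma$ (both composites send $v$ to $(\sigma v,0)$) after passing to the quotients. Then everything collapses to a one-line equivalence. Applying $[\bar\sigma]^{-1}$ gives $Q_\M(u)=[\bar\sigma]^{-1}Q_{{}^{\sigma}\M}(\sigma u)$ for every $u\in V^{ev}$, so for $\rho\in\U(\V)$ and $w\in V^{ev}$,
\[
Q_\M\bigl(\widetilde\sigma\rho\sigma\,w\bigr)=[\bar\sigma]^{-1}Q_{{}^{\sigma}\M}\bigl(\sigma\widetilde\sigma\rho\sigma\,w\bigr)=[\bar\sigma]^{-1}Q_{{}^{\sigma}\M}\bigl(\rho(\sigma w)\bigr),\qquad Q_\M(w)=[\bar\sigma]^{-1}Q_{{}^{\sigma}\M}(\sigma w).
\]
Since $[\bar\sigma]^{-1}$ is a bijection and $\sigma$ is a bijection of $V^{ev}$, the two left-hand quantities agree for all $w\in V^{ev}$ precisely when $Q_{{}^{\sigma}\M}(\rho v)=Q_{{}^{\sigma}\M}(v)$ for all $v\in V^{ev}$. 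In other words $\widetilde\sigma\rho\sigma\in\U(\V,\I)$ if and only if $\rho\in\U(\V,{}^{\sigma}\I)$, which is exactly the asserted equality $^{\sigma}\U(\V,\I)=\U(\V,{}^{\sigma}\I)$.

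The argument is purely formal, so I do not expect a genuine obstacle; the one point that needs care is the bookkeeping in the first step, namely checking cleanly that $\bar\sigma$ is an $\hp$-equivariant group automorphism of $\h^{ev}$ that carries $\M_{\min}$ and $\M_{\max}$ to themselves. It is here that the isometry property of $\sigma$, the invariance of $\K$ under $\sigma$, and the invariance of $V^{ev}$ under $\sigma$ all get used; in particular the clause ``$B(v,w)\in I$ for all $w\in V^{ev}$'' in the definition of $\M_{\max}$ must be transported through $\sigma$ using the surjectivity of $\sigma|_{V^{ev}}$. Once that setup is in place, the proof is the displayed chain of equalities together with the bijectivity remarks.
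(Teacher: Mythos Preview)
Your argument is correct. The core content is the same as the paper's, but you package it differently: the paper works at the element level, using the explicit formula $Q_\M(\tau u)=Q_\M(u)\iff(\tau u-u,B(u-\tau u,u))\in\M$, and then pushes this through $\sigma$ using $B(\sigma\cdot,\sigma\cdot)=B(\cdot,\cdot)$ to obtain $(\sigma\tau\widetilde\sigma v-v,B(v-\sigma\tau\widetilde\sigma v,v))\in{}^{\sigma}\M$; it proves only the inclusion $^{\sigma}\U(\V,\I)\subseteq\U(\V,{}^{\sigma}\I)$ and notes that this suffices by applying the same inclusion with $\widetilde\sigma$ and ${}^{\sigma}\I$. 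You instead lift the whole situation to the Heisenberg group, observing that $\bar\sigma(v,x)=(\sigma v,x)$ is an $\hp$-equivariant automorphism of $\h^{ev}$ carrying $\M$ onto ${}^{\sigma}\M$, so that $[\bar\sigma]\circ Q_\M=Q_{{}^{\sigma}\M}\circ\sigma$ and the equivalence drops out symmetrically. Your route has the minor advantage of yielding both inclusions at once and of making transparent why Definition~\ref{defaction} really does define an action on odd form ideals; the paper's route has the advantage of being shorter and avoiding the setup of $\bar\sigma$. Either way the only substantive inputs are that $\sigma$ preserves $B$ and restricts to a bijection of $V^{ev}$, and both proofs use exactly these.
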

\begin{proof}
Clearly it suffices to show that $^{\sigma}\!\U(\V, \I)\subseteq \U(\V, {}^{\sigma}\I)$. Let $\tau\in\U(\V, \I)$ and $v\in V^{ev}$. Then $\widetilde\sigma v\in V^{ev}$. Since $\tau$ preserves $Q_\M$, we have
\begin{align*}
(\tau\widetilde\sigma v-\widetilde\sigma v,B(\widetilde\sigma v-\tau\widetilde\sigma v,\widetilde\sigma v))\in\M.
\end{align*}
Since $\sigma$ preserves $B$, it follows that 
\begin{align*}
(\tau\widetilde\sigma v-\widetilde\sigma v,B(v-\sigma\tau\widetilde\sigma v,v))\in\M
\end{align*}
and hence
\begin{align*}
(\sigma\tau\widetilde\sigma v-v,B(v-\sigma\tau\widetilde\sigma v,v))\in{}^{\sigma}\M.
\end{align*}
Thus $\sigma\tau\widetilde\sigma $ preserves $Q_{^{\sigma}\M}$ and therefore lies in $\U(\V,{}^{\sigma}\I)$.
\end{proof}
\begin{corollary}
Let $\sigma\in \U(\V)$ and $\I$ an be an odd form ideal. Then $^{\sigma}\!\NU(\V, \I)=\NU(\V, {}^{\sigma}\I)$.
\end{corollary}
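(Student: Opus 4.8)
The plan is to deduce this directly from Proposition \ref{proptild} together with the elementary group-theoretic fact that conjugation carries the normalizer of a subgroup onto the normalizer of the conjugated subgroup. Concretely, for any group $G$, any subgroup $A\leq G$, and any $\sigma\in G$, one has $\Normalizer_G({}^{\sigma}\!A)={}^{\sigma}\Normalizer_G(A)$: indeed, $\tau$ normalizes ${}^{\sigma}\!A$ if and only if $\widetilde\sigma\tau\sigma$ normalizes $A$, i.e. if and only if $\tau\in\sigma\,\Normalizer_G(A)\,\widetilde\sigma$.

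First I would apply Proposition \ref{proptild} to rewrite ${}^{\sigma}\!\U(\V,\I)=\U(\V,{}^{\sigma}\I)$, so that $\U(\V,{}^{\sigma}\I)$ is literally the $\sigma$-conjugate of $\U(\V,\I)$ inside $\U(\V)$. Then I would invoke the displayed normalizer identity with $G=\U(\V)$ and $A=\U(\V,\I)$ to obtain
\[
\NU(\V,{}^{\sigma}\I)=\Normalizer_{\U(\V)}\bigl({}^{\sigma}\!\U(\V,\I)\bigr)={}^{\sigma}\Normalizer_{\U(\V)}\bigl(\U(\V,\I)\bigr)={}^{\sigma}\!\NU(\V,\I),
\]
which is the assertion.

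There is no real obstacle here: once Proposition \ref{proptild} is available, the statement is a formal consequence of how normalizers behave under conjugation, and the only thing to check is the (routine) group-theoretic lemma above, whose verification is the computation $\tau\,{}^{\sigma}\!A\,\widetilde\tau={}^{\sigma}\!A\iff\widetilde\sigma\tau\sigma\,A\,\widetilde{(\widetilde\sigma\tau\sigma)}=A$. I would spell out that equivalence in one or two lines and then conclude.
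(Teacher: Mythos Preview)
Your proposal is correct and is precisely the intended argument: the paper states this corollary without proof, since it follows immediately from Proposition~\ref{proptild} together with the elementary fact that conjugation carries normalizers to normalizers. Your write-up simply makes explicit what the paper leaves implicit.
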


\section{The hyperbolic case}
In this section $R$ denotes a ring with pseudoinvolution $~\bar{}~$.
\subsection{Odd hyperbolic unitary groups}
\begin{definition}
Let $\V=(V, B, Q_\K)$ and $\V'=(V', B', Q_{\K'})$ denote odd quadratic spaces. Their {\it orthogonal sum}  is the odd quadratic space 
\[\V\operp \V':=(V\oplus V', B+B', Q_{\K+{\K}'})\]
where 
\[B+B'(v+v',w+w')=B(v,w)+B'(v',w')\]
and
\[\K+{\K}'=\{(v+v',x+x')\mid (v,x)\in\K, (v',x')\in {\K}'\}.\]
\end{definition}

\begin{definition}
Let $H$ be the free right $R$-module with basis $\{e_1, e_{-1}\}$. Let $B$ be the anti-Hermitian form on $H$ defined by $B_H(e_1, e_{-1})=1$ and $B_H(e_1, e_1)=0=B(e_{-1}, e_{-1})$. Further set $\K_H:=\{(e_1x_1+e_{-1}x_{-1},\bar x_1\bar 1^{-1}x_{-1}+y+\bar y)\mid x_1,x_{-1},y\in R\}$. Then $\Hy:=(H, B_H, \K_H)$ is an odd quadratic space which is called the {\it hyperbolic plane}. For any $n\in \N$ we denote by $\Hy^n$ the orthogonal sum of $n$ copies of the hyperbolic plane $\Hy$. 
\end{definition}

\begin{definition}
An {\it odd hyperbolic unitary space of rank $n$} is an orthogonal sum $\Hy^n\operp \V_0$ where $\V_0=(V_0, B_0, Q_{\K_0})$ is some odd quadratic space. The odd unitary group $\U(\Hy^n\operp \V_0)$ is called an {\it odd hyperbolic unitary group} and is denoted by $\U_{2n}(\V_0)$.
\end{definition}

\begin{definition}\label{defwit}
Let $\V=(V, B, Q_\K)$ be an odd quadratic space. A {\it hyperbolic pair} is a pair $(v,w)$ where $v$ and $w$ are isotropic elements of $V$ such that $B(v,w)=1$. The greatest $n\in \N_0$ satisfying the condition that there exist $n$ mutually orthogonal hyperbolic pairs in $V$ is called the Witt index of $\V$ and is denoted by $\ind \V$. It is easy to see that if $1\leq n\leq \ind \V$, then $\U(\V)$ is isomorphic to an odd hyperbolic unitary group $\U_{2n}(\V_0)$, cf. \cite[Section 5]{petrov}.
\end{definition}

Until the end of Section 4, $n\in \N$ denotes a nonnegative integer and $\V_0=(V_0, B_0, Q_{\K_0})$ an odd quadratic space. We set $\V:=\Hy^n\operp \V_0$ and write $\Hy^n=(V_h, B_h, Q_{\K_h})$ and $\V=(V, B, Q_\K)$ (hence $V=V_h\oplus V_0$, $B=B_h+B_0$ and $\K=\K_{h}+\K_0$). We denote the basis of $V_h$ coming from the bases of the orthogonal summands by $e_1,\dots, e_n, e_{-n}, \dots, e_{-1}$. Further we set $\Theta_+:=\{1,\dots,n\}$, $\Theta_{-}:=\{-n,\dots,-1\}$ and $\Theta:=\Theta_+\cup \Theta_-$. We define an order $<$ on $\Theta$ by $1<\dots<n<-n<\dots<-1$. We set $\epsilon_i:=\bar 1^{-1}$ if $i\in\Theta_+$ and $\epsilon_i:=-1$ if $i\in \Theta_-$.\\

If $v\in V$, then there are uniquely determined $v_h\in V_h$ and $v_0\in V_0$ such that $v=v_h+v_0$. Moreover, there are uniquely determined $v_i\in R~(i\in \Theta)$ such that $v_h=\sum\limits_{i\in\theta}e_iv_i$. Define a sesquilinear form $F_h$ on $V_h$ by 
\[F_h(v_h,w_h)=\sum\limits_{i\in\Theta_+}\bar v_i\bar 1^{-1}w_{-i}\quad\forall v_h,w_h\in V_h.\]
Then
\[B_{h}(v_h,w_h)=F_h(v_h,w_h)-\overline{F_h(w_h,v_h)}\quad\forall v_h,w_h\in V_h\]
and
\[\K_{h}=\{(v_h,F_h(v_h,v_h)+x+\bar x)\mid v_h\in V_h,x\in R\}.\]

For a $\sigma\in \U_{2n}(\V_0)$ define the linear maps
\begin{align*}
\sigma^{hh}:V_h&\rightarrow V_h,&\sigma^{h0}:V_h&\rightarrow V_0,&\sigma^{0h}:V_0&\rightarrow V_h,&\sigma^{00}:V_0&\rightarrow V_0.\\
v_h&\mapsto (\sigma v_h)_h &v_h&\mapsto(\sigma v_h)_0&v_0&\mapsto (\sigma v_0)_h &v_0&\mapsto (\sigma v_0)_0
\end{align*} 
Since $V_h$ is a free right $R$-module of rank $2n$, we can identify $\sigma^{hh}$ with a matrix in $\Mat_{2n}(R)$. Instead of $\sigma_{ij}^{hh}$ we write just $\sigma_{ij}$ for the entry of $\sigma^{hh}$ at position $(i,j)$ (which equals $(\sigma e_j)_i$).\\

\begin{lemma}\label{lemodd}
Let $\sigma \in \GL(V)$. Then $\sigma\in U_{2n}(\V_0)$ iff Conditions (i)-(vi) below are satisfied.
\begin{enumerate}[(i)]
\item $\widetilde\sigma_{ij}=-\epsilon_i\bar\sigma_{-j,-i}\epsilon_{-j}$ for any $i,j\in\Theta$.
\item $(\widetilde\sigma^{0h}v_0)_{i}=-\epsilon_iB_0(\sigma^{h0}e_{-i},v_0)$ for any $v_0\in V_0$ and $i\in\Theta$.
\item $B_0(v_0,\widetilde\sigma^{h0}e_i)=\overline{(\sigma v_0)_{-i}}\epsilon_{-i}$ for any $v_0\in V_0$ and $i\in\Theta$.
\item $B_0(v_0,\widetilde\sigma^{00}w_0)=B_0(\sigma^{00}v_0,w_0)$ for any $v_0,w_0\in V_0$.
\item $Q(\sigma e_i)\in \K$ for any $i\in\Theta$.
\item $Q(\sigma v_0)\minus Q(v_0)\in \K$ for any $v_0\in V_0$.
\end{enumerate}
\end{lemma}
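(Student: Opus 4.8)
The plan is to split the condition $\sigma\in\U_{2n}(\V_0)=\U(\V)$ into its two halves --- that $\sigma$ preserves $B$ and that $\sigma$ preserves $Q_\K$ --- and to show that the first half is equivalent to (i)--(iv) and that, once the first half is assumed, the second is equivalent to (v)--(vi). Throughout let $S:=\{e_i\mid i\in\Theta\}\cup V_0$, a generating set of the right $R$-module $V$.

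\emph{The bilinear form.} Since $\sigma\in\GL(V)$, the identity $B(\sigma v,\sigma w)=B(v,w)$ for all $v,w\in V$ is equivalent to the adjointness identity $B(\sigma v,w)=B(v,\widetilde\sigma w)$ for all $v,w\in V$ (replace $w$ by $\widetilde\sigma w$, respectively by $\sigma w$). Writing $\Phi(v,w):=B(\sigma v,w)-B(v,\widetilde\sigma w)$, one checks that $\Phi$ is biadditive and satisfies $\Phi(vx,w)=\bar x\bar 1^{-1}\Phi(v,w)$ and $\Phi(v,wy)=\Phi(v,w)y$; hence $\Phi\equiv 0$ iff $\Phi$ vanishes on $S\times S$. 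Using $V=V_h\oplus V_0$, $B=B_h+B_0$, and the fact that --- by the description of $B_h$ via $F_h$ --- the element $B_h(e_i,e_j)\in R$ vanishes unless $j=-i$ and is a unit of $R$ when $j=-i$ (the resulting signs and $\bar 1^{\pm1}$-factors being exactly what the symbols $\epsilon_i$ record), I would carry out the four evaluations of $\Phi$ on $S\times S$: for $v=e_a$, $w=e_b$ one obtains condition (i); for $v=e_a$, $w\in V_0$ one obtains (ii); for $v\in V_0$, $w=e_b$ one obtains (iii); and for $v,w\in V_0$ one obtains (iv). Therefore $\sigma$ preserves $B$ iff (i)--(iv) hold.

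\emph{The odd quadratic form.} Now assume $\sigma$ preserves $B$, and for $v\in V$ put $\delta(v):=Q(\sigma v)\minus Q(v)\in\h$. Since $\K$ is normal in $\h$, one has $Q_\K(\sigma v)=Q_\K(v)$ iff $\delta(v)\in\K$; so $\sigma$ preserves $Q_\K$ iff $\delta(v)\in\K$ for every $v\in V$. Using the two identities for $Q$ from Definition \ref{defoqs}, the relation $B(\sigma v,\sigma w)=B(v,w)$, the distributivity $(\alpha\+\beta)\hp y=\alpha\hp y\+\beta\hp y$, and the fact that each element $(0,x)$ is central in $\h$, a direct computation gives
\[\delta(v+w)=\delta(v)\+Q(v)\+\delta(w)\minus Q(v)\qquad\text{and}\qquad\delta(vx)=\delta(v)\hp x.\]
Because $\K$ is normal in $\h$ and stable under $\hp$, these formulas show that $\{v\in V\mid\delta(v)\in\K\}$ is a right $R$-submodule of $V$. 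Since $F_h(e_i,e_i)=0$ we have $Q(e_i)=(e_i,0)\in\K_h\subseteq\K$, so $\delta(e_i)\in\K$ iff $Q(\sigma e_i)\in\K$, i.e. iff (v) holds; and (vi) is literally the statement that $\delta(v_0)\in\K$ for all $v_0\in V_0$. As $S$ generates $V$, conditions (v) and (vi) together are equivalent to $\delta(v)\in\K$ for all $v\in V$. Combining this with the first part proves the lemma.

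\emph{Main obstacle.} No step is conceptually difficult; the real work lies in the bookkeeping of the four evaluations of $\Phi$ --- tracking the units $B_h(e_i,e_{-i})$, the $\bar 1^{-1}$-factors produced by the sesquilinearity of $B_h$, and the sign conventions packaged into the $\epsilon_i$ --- and, in the second part, in respecting the order of the $\+$-operations and the conjugation appearing in the formula for $\delta(v+w)$, since the Heisenberg group $\h$ is not abelian.
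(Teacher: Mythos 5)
Your proposal is correct. The treatment of the sesquilinear form is essentially the paper's: the paper also derives (i)--(iv) from the adjointness identity $B(\sigma v,w)=B(v,\widetilde\sigma w)$ evaluated on basis vectors and on $V_0$, and dismisses the converse as an exercise with exactly your ``replace $w$ by $\widetilde\sigma w'$'' substitution. Where you genuinely diverge is the quadratic part: the paper proves the converse by writing out an explicit identity (its Equation (5)) expressing $Q(\sigma v)\minus Q(v)$ as a $\+$-combination of the terms $(Q(\sigma e_i)\minus Q(e_i))\hp v_i$, of $Q(\sigma v_0)\minus Q(v_0)$, and of explicit central terms of the form $(0,z+\bar z)\in\K_{\min}$, whereas you argue structurally: setting $\delta(v)=Q(\sigma v)\minus Q(v)$, your identities $\delta(vx)=\delta(v)\hp x$ and $\delta(v+w)=\delta(v)\+Q(v)\+\delta(w)\minus Q(v)$ (both of which are correct -- the second uses $B$-preservation to cancel the $(0,B(v,w))$-terms) show, via normality of $\K$ in $\h$ and $\hp$-stability, that $\{v\in V\mid\delta(v)\in\K\}$ is a right $R$-submodule, so it suffices that it contain the generators $e_i$ and $V_0$, i.e.\ that (v) and (vi) hold. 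Your route is cleaner and avoids the coefficient bookkeeping entirely; what the paper's explicit formula buys is reusability: Equation (5) is invoked again in the proof of Lemma \ref{lemoddrel}, where one must land in the relative odd form parameter $\M$ (normal only in $\h^{ev}$, and with level-$I$ constraints on the coefficients), a situation where your conjugation-by-$Q(v)$ argument would need additional care rather than applying verbatim.
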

\begin{proof}
($\Rightarrow$) Suppose that $\sigma \in U_{2n}(\V_0)$. Then 
\begin{align*}
\widetilde\sigma_{ij}=-\epsilon_iB(e_{-i},\widetilde\sigma e_j)=-\epsilon_iB(\sigma e_{-i},e_j)=-\epsilon_i\bar\sigma_{-j,-i}\epsilon_{-j}
\end{align*}
for any $i,j\in \Theta$ since $\sigma$ preserves $B$. Hence (i) holds. Conditions (ii)-(iv) can be shown similarly. Conditions (v) and (vi) are satisfied since $\sigma$ preserves $Q_\K$ and $Q(e_i)\in \K_h\subseteq \K$ for any $i\in \Theta$.\\
($\Leftarrow$) Suppose $\sigma$ satisfies (i)-(vi). It is an easy exercise to deduce from Conditions (i)-(iv) that $B(\sigma v,\sigma w)=B(v,w)$ for any $v,w\in V$ (hint: replace $w$ by $\widetilde\sigma w'$). It remains to show that $Q(\sigma v)\minus Q(v)\in \K$ for any $v\in V$. A straightforward computation shows that
\begin{align}
&Q(\sigma v)\minus Q(v)\notag\\
=&(\plus\limits_{i=1}^{-1}(Q(\sigma e_i)\minus Q(e_i))\hp v_i)\+Q(\sigma v_0)\minus Q(v_0)\notag\\
&\+(0,\sum\limits_{i<j}\bar v_i\epsilon_i(\sigma_{-i,j}-\delta_{-i,j})v_{-i})+\overline{\sum\limits_{i<j}\bar v_i\epsilon_i(\sigma_{-i,j}-\delta_{-i,j})v_{-i}})\notag\\
&\+(0,\sum\limits_{i\in\Theta}\bar v_i\epsilon_i(\sigma^{0h} v_0)_{-i}+\overline{\sum\limits_{i\in\Theta}\bar v_i\epsilon_i(\sigma^{0h} v_0)_{-i}}).
\end{align}
It follows from (v) and (vi) that $Q(\sigma v)\minus Q(v)\in \K$.
\end{proof}
\begin{definition}
Let $\h_0$ be the Heisenberg group of the anti-Hermitian space $(V_0, B_0)$. Define the map 
\begin{align*}
Q^0:V&\rightarrow \h_0\\
v&\mapsto (v_0,-F_h(v_h, v_h))
\end{align*}
One checks easily that.
\begin{align*}
&Q^0(vx)=Q^0(v)\hp x~\forall v\in V, x\in R\text{ and}
\\
&Q^0(v+w)=Q^0(v)\+Q^0(w)\minus(0,B(v,w)+F_h(w_h,v_h)+\overline{F_h(w_h,v_h)})~\forall v,w\in V.
\end{align*}
\end{definition}
\begin{lemma}\label{lemtec}
Let $v,w\in V$. Then $Q(v)\equiv Q(w)\bmod \K~\Leftrightarrow~ Q^0(v)\equiv Q^0(w)\bmod \K_0$.
\end{lemma}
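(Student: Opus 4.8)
The plan is to peel off a purely combinatorial lemma describing how $\K=\K_h+\K_0$ sits inside $\h$, and then to reduce the whole equivalence to a single cancellation identity among the terms $F_h(\cdot,\cdot)$. \emph{Reduction lemma.} For every $u\in V$, with decomposition $u=u_h+u_0$ along $V=V_h\oplus V_0$, and every $z\in R$, one has $(u,z)\in\K$ if and only if $(u_0,\,z-F_h(u_h,u_h))\in\K_0$, where the latter pair is read in $\h_0$. For the ``if'' direction I would write $(u,z)=(u_h,F_h(u_h,u_h))\+(u_0,\,z-F_h(u_h,u_h))$: the first factor lies in $\K_h$ by the description $\K_h=\{(a_h,\,F_h(a_h,a_h)+x+\bar x)\mid a_h\in V_h,\ x\in R\}$, the second lies in $\K_0$ by hypothesis, and since $B(u_h,u_0)=0$ (the summands $V_h,V_0$ are orthogonal) the product really is $(u,z)$. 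For ``only if'', write $(u,z)=(a_h+a_0,\,x_h+x_0)$ with $(a_h,x_h)\in\K_h$, $(a_0,x_0)\in\K_0$; uniqueness of the decomposition forces $a_h=u_h$, $a_0=u_0$, $x_h+x_0=z$, and $x_h-F_h(u_h,u_h)\in\{y+\bar y\mid y\in R\}$, so $z-F_h(u_h,u_h)$ and $x_0$ differ by an element $c$ of that set; since $(0,c)\in\K_{\min}\subseteq\K_0$, we get $(u_0,\,z-F_h(u_h,u_h))=(u_0,x_0)\+(0,c)\in\K_0$.

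With the reduction lemma available, I would compute both sides of the asserted equivalence. The composition and inversion laws of the Heisenberg groups give $Q(v)\minus Q(w)=(v-w,\ B(w,w)-B(v,w))$ in $\h$ and $Q^0(v)\minus Q^0(w)=(v_0-w_0,\ F_h(w_h,w_h)-F_h(v_h,v_h)+B_0(w_0,w_0)-B_0(v_0,w_0))$ in $\h_0$. Applying the reduction lemma with $u:=v-w$ and $z:=B(w,w)-B(v,w)$ turns the condition $Q(v)\equiv Q(w)\bmod\K$ into $(v_0-w_0,\ z-F_h(v_h-w_h,v_h-w_h))\in\K_0$. Now expand $F_h(v_h-w_h,v_h-w_h)$ by additivity in each variable, substitute $B=B_h+B_0$ together with $B_h(a_h,b_h)=F_h(a_h,b_h)-\overline{F_h(b_h,a_h)}$, and cancel: the difference between $z-F_h(v_h-w_h,v_h-w_h)$ and the second component of $Q^0(v)\minus Q^0(w)$ comes out to $\bigl(F_h(w_h,v_h)+\overline{F_h(w_h,v_h)}\bigr)-\bigl(F_h(w_h,w_h)+\overline{F_h(w_h,w_h)}\bigr)$, which is visibly of the form $y+\bar y$. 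Since adding $(0,y+\bar y)\in\K_{\min}\subseteq\K_0$ to the second component of a pair does not change membership in $\K_0$, we conclude $(v_0-w_0,\ z-F_h(v_h-w_h,v_h-w_h))\in\K_0$ iff $Q^0(v)\minus Q^0(w)\in\K_0$, which is exactly the claim.

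The only genuine labour is the cancellation in the second step, and the one subtlety there is to track the pseudoinvolution carefully, so that the surviving correction terms are honestly of the shape $y+\bar y$ and not merely invariant under the pseudoinvolution; I expect this to be routine, but it is the place where a sign error would hide. A more conceptual packaging, which I might use in the final write-up, is to introduce $\psi\colon\h\to\h_0/\K_{\min}$, $(v,x)\mapsto(v_0,\,x-F_h(v_h,v_h))+\K_{\min}$, verify by the same identities that $\psi$ is a group homomorphism with kernel $\K_h$ and with $\psi(\K_0)=\K_0/\K_{\min}$, whence $\psi^{-1}(\K_0/\K_{\min})=\K$ and $\psi(Q(v))=Q^0(v)+\K_{\min}$; the equivalence then drops out immediately. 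Either way everything rests on the single identity highlighted above.
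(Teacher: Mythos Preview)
Your proposal is correct and follows essentially the same route as the paper: both arguments compute $Q(v)\minus Q(w)$, strip off the hyperbolic contribution (the paper by subtracting $(v_h-w_h,\,F_h(v_h-w_h,v_h-w_h))\in\K_h$ inline, you via your explicit reduction lemma), and then verify that the resulting element of $\h_0$ differs from $Q^0(v)\minus Q^0(w)$ by $(0,\,F_h(w_h,v_h-w_h)+\overline{F_h(w_h,v_h-w_h)})\in(\K_0)_{\min}$. Your cancellation is correct as written, and the alternative packaging via the homomorphism $\psi$ is a pleasant reformulation of the same computation.
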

\begin{proof}
Clearly
\begin{align*}
&Q(v)\minus Q(w)\in \K\\
\Leftrightarrow~&(v-w,B(w-v,w))\in \K\\
\Leftrightarrow~&(v-w,B(w-v,w))\minus (v_h-w_h,F_H(v_h-w_h,v_h-w_h))\in \K\\
\Leftrightarrow~& Q^0(v-w)\+(0,B(w-v,w))\in \K\\
\Leftrightarrow~& Q^0(v-w)\+(0,B(w-v,w))\in \K_0
\end{align*}
(note that $(v_h-w_h,F_H(v_h-w_h,v_h-w_h))\in \K_h\subseteq \K$). One checks easily that 
\begin{align*}
&Q^0(v-w)\+(0,B(w-v,w))\minus (Q^0(v)\minus Q^0(w))\\
=&(0,-F_h(w_h,w_h-v_h)-\overline{F_h(w_h,w_h-v_h)})\in(\K_0)_{\min}\subseteq\K_0.
\end{align*}
The assertion of the lemma follows.
\end{proof}

\begin{proposition}\label{propodd}
Let $\sigma \in \GL(V)$. Then $\sigma\in U_{2n}(\V_0)$ iff Conditions (i)-(vi) below are satisfied.
\begin{enumerate}[(i)]
\item $\widetilde\sigma_{ij}=-\epsilon_i\bar\sigma_{-j,-i}\epsilon_{-j}$ for any $i,j\in\Theta$.
\item $(\widetilde\sigma^{0h}v_0)_{i}=-\epsilon_iB_0(\sigma^{h0}e_{-i},v_0)$ for any $v_0\in V_0$ and $i\in\Theta$.
\item $B_0(v_0,\widetilde\sigma^{h0}e_i)=\overline{(\sigma v_0)_{-i}}\epsilon_{-i}$ for any $v_0\in V_0$ and $i\in\Theta$.
\item $B_0(v_0,\widetilde\sigma^{00}w_0)=B_0(\sigma^{00}v_0,w_0)$ for any $v_0,w_0\in V_0$.
\item $Q^0(\sigma e_i)\in \K_0$ for any $i\in\Theta$.
\item $Q^0(\sigma v_0)\minus Q^0(v_0)\in \K_0$ for any $v_0\in V_0$.
\end{enumerate}
\end{proposition}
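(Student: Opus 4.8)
The plan is to obtain Proposition \ref{propodd} as an essentially immediate consequence of Lemma \ref{lemodd} and Lemma \ref{lemtec}. First I would observe that Conditions (i)--(iv) in the statement of Proposition \ref{propodd} are word for word the same as Conditions (i)--(iv) in Lemma \ref{lemodd}. Hence, granting Lemma \ref{lemodd}, it suffices to prove that for an arbitrary $\sigma\in\GL(V)$ the fifth condition of Lemma \ref{lemodd} is equivalent to the fifth condition of Proposition \ref{propodd}, and likewise for the sixth condition; once both equivalences are established, $\sigma\in\U_{2n}(\V_0)$ is equivalent to the conjunction of Conditions (i)--(vi) of Proposition \ref{propodd}.

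For the fifth conditions, fix $i\in\Theta$. Since $\sigma\in\GL(V)$ we have $\sigma e_i\in V$, so Lemma \ref{lemtec} applies with $v=\sigma e_i$ and $w=0$ and yields
\[Q(\sigma e_i)\minus Q(0)\in\K~\Longleftrightarrow~Q^0(\sigma e_i)\minus Q^0(0)\in\K_0.\]
From the definitions of $Q$ and $Q^0$ one has $Q(0)=(0,0)$ and $Q^0(0)=(0,0)$, the identity elements of $\h$ and $\h_0$ respectively, so the displayed equivalence simplifies to $Q(\sigma e_i)\in\K\Leftrightarrow Q^0(\sigma e_i)\in\K_0$, which is exactly the equivalence of the two fifth conditions.

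For the sixth conditions, fix $v_0\in V_0$; again $\sigma v_0\in V$, so Lemma \ref{lemtec} with $v=\sigma v_0$ and $w=v_0$ gives directly
\[Q(\sigma v_0)\minus Q(v_0)\in\K~\Longleftrightarrow~Q^0(\sigma v_0)\minus Q^0(v_0)\in\K_0,\]
which is the equivalence of the two sixth conditions. Combining the last two paragraphs with the identity of Conditions (i)--(iv) proves the proposition.

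I do not expect a genuine obstacle here: all the substantive work has already been done in Lemma \ref{lemtec}, whose proof in turn rests on the elementary identities for $Q$, $Q^0$, $F_h$ together with the inclusions $\K_h\subseteq\K$ and $(\K_0)_{\min}\subseteq\K_0$. The only small points to watch are that $Q$ and $Q^0$ each send the zero vector to the identity of the relevant Heisenberg group, and that ``$\alpha\minus\beta\in\K$'' is the intended reading of ``$\alpha\equiv\beta\bmod\K$'' in Lemma \ref{lemtec} (legitimate because $\K$, resp. $\K_0$, is a normal subgroup of $\h$, resp. $\h_0$); both are clear from the definitions and from the proof of Lemma \ref{lemtec}.
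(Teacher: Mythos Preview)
Your proof is correct and follows exactly the paper's approach: the paper's proof reads simply ``Follows from the Lemmas \ref{lemodd} and \ref{lemtec},'' and you have spelled out precisely how, applying Lemma \ref{lemtec} with $(v,w)=(\sigma e_i,0)$ for (v) and $(v,w)=(\sigma v_0,v_0)$ for (vi). Your care about $Q(0)=(0,0)$, $Q^0(0)=(0,0)$, and the meaning of the congruence modulo the normal subgroups $\K$, $\K_0$ is well placed and all correct.
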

\begin{proof}
Follows from the Lemmas \ref{lemodd} and \ref{lemtec}.
\end{proof}

\subsection{The elementary subgroup and relative elementary subgroups}
\begin{definition}\label{defel}
If $i,j\in \Theta$, $i\neq\pm j$ and $x\in R$, then the ESD transvection \[T_{ij}(x):=T_{e_{-j},-e_ix\epsilon_j}(0)\]
is called an {\it elementary short root transvection}. If $i\in \Theta$ and $(v_0,x)\in \K_0$, then the ESD transvection \[T_{i}(v_0,x):=T_{e_{i},v\epsilon_{-i}}(\bar \epsilon_{-i}\bar 1^{-1}x\epsilon_{-i})\]
is called an {\it elementary extra short root transvection}.
If an element of $\U_{2n}(\V_0)$ is an elementary short or extra short root transvection, then it is called an {\it elementary transvection}. The subgroup of $\U_{2n}(\V_0)$ generated by all elementary transvections is called the {\it elementary subgroup} of $\U_{2n}(\V_0)$ and is denoted by $\EU_{2n}(\V_0)$. 
\end{definition}

One checks easily that an elementary short root transvection $T_{ij}(x)$ maps
\begin{align*}
e_k\mapsto e_k~(k\neq -i,j), \quad e_{-i}\mapsto e_{-i}+e_{-j}\epsilon_{-j}\bar x\epsilon_i, \quad e_j\mapsto e_j+e_ix, \quad w_0\mapsto w_0~(w_0\in V_0).
\end{align*}
Hence 
\begin{align*}
T_{ij}^{hh}(x)=e+e^{ij}x+e^{-j,-i}\epsilon_{-j}\bar x\epsilon_i,\quad T_{ij}^{h0}(x)=0=T_{ij}^{0h}(x),\quad T_{ij}^{00}(x)=id_{V_0}
\end{align*}
where $e$ denotes the identity matrix in $M_{2n}(R)$ and $e^{pq}$ the matrix in $M_{2n}(R)$ with a $1$ at position $(p,q)$ and zeros elsewhere.\\

One checks easily that an elementary extra short root transvection $T_{i}(v_0,x)$ maps
\begin{align*}
e_k\mapsto e_k~(k\neq -i), \quad e_{-i}\mapsto e_{-i}+e_i\epsilon_{i} x-v_0,\quad w_0\mapsto w_0-e_i\epsilon_iB_0(v_0,w_0)~(w_0\in V_0).
\end{align*}
Hence 
\begin{align*}
&T_{i}^{hh}(v_0,x)=e+e^{i,-i}\epsilon_ix,\quad T_{i}^{h0}(v_0,x) w_h=-v_0w_{-i}~(w_h\in V_h),\\
&T_{i}^{0h}(v_0,x)w_0=-e_i\epsilon_i B_0(v_0,w_0)~(w_0\in V_0), \quad T_{i}^{00}(v_0,x)=id_{V_0}.
\end{align*}
\begin{lemma}\label{39}
The following relations hold for the elementary transvections.
\begin{align*}
&T_{ij}(x)=T_{-j,-i}(\epsilon_{-j}\bar x\epsilon_{i}), \tag{R0}\\
&T_{ij}(x)T_{ij}(y)=T_{ij}(x+y), \tag{R1}\\
&T_{i}(v_0,x)T_{i}(w_0,y)=T_{i}((v_0,x)\+(w_0,y)), \tag{R2}\\
&[T_{ij}(x),T_{hk}(y)]=id_V \text{ if } h\neq j,-i \text{ and } k\neq i,-j, \tag{R3}\\
&[T_{i}(v_0,x),T_{jk}(y)]=id_V \text{ if } i\neq -j,k,\tag{R4}\\
&[T_{ij}(x),T_{jk}(y)]=T_{ik}(xy) \text{ if } i\neq\pm k, \tag{R5}\\
&[T_{i}(v_0,x),T_{j}(w_0,y)]=T_{i,-j}(\epsilon_iB_0(v_0,w_0)) \text{ if } i\neq\pm j, \tag{R6}\\
&[T_{i}(v_0,x),T_{i}(w_0,y)]=T_{i}(0,B_0(v_0,w_0)+\overline{B_0(v_0,w_0)}), \tag{R7}\\
&[T_{i}(v_0,x),T_{-i,j}(y)]=T_{ij}(\epsilon_ixy)T_{-j}((v_0,-\bar x)\hp y)\text{ and}\tag{R8}\\
&[T_{ij}(x),T_{j,-i}(y)]=T_{i}(0,-\epsilon_{-i}\bar 1xy-\overline{\epsilon_{-i}\bar 1xy}). \tag{R9}
\end{align*}
\end{lemma}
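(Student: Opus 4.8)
The plan is to verify the ten relations by direct computation, relying on the material of Lemma \ref{lemesd}: the explicit descriptions of the matrices $T_{ij}^{\bullet\bullet}(x)$, $T_i^{\bullet\bullet}(v_0,x)$ and of the images $T_{ij}(x)w$, $T_i(v_0,x)w$ recorded just above the statement; the addition formula $T_{u,v_1}(x)T_{u,v_2}(y)=T_{u,v_1+v_2}(x+y+B(v_1,v_2))$, which (taking $v_2=-v_1$) gives the inverse $T_{u,v}(a)^{-1}=T_{u,-v}(-a+B(v,v))$ and, read correctly, says that for a fixed isotropic vector $u$ the assignment $(v,b)\mapsto T_{u,v}(b)$ is a homomorphism from the subgroup $\{(v,b)\in\K\mid B(u,v)=0\}$ of $\h$ into $\U(\V)$; and the conjugation formula ${}^{\sigma}T_{u,v}(a)=T_{\sigma u,\sigma v}(a)$. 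The scalar facts used constantly are $B(e_i,e_j)=0$ unless $j=-i$, the orthogonality $B(v_h,w_0)=0$ for $v_h\in V_h$ and $w_0\in V_0$, the rule $B(vx,wy)=\bar x\bar 1^{-1}B(v,w)y$, the law $\overline{xy}=\bar y\bar 1^{-1}\bar x$ of the pseudoinvolution together with its consequences $\overline{\bar 1^{-1}}=\bar 1^2$ and $\overline{-1}=-\bar 1$, and the values $\epsilon_i\in\{\bar 1^{-1},-1\}$; in particular $B(e_i,e_{-i})$ is $1$ or $-\bar 1$ according as $i\in\Theta_+$ or $i\in\Theta_-$, and $\bar 1^{-1}\bar\epsilon_i\bar 1^{-1}B(e_i,e_{-i})=1$ in both cases.

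Relations (R0), (R1), (R2) are essentially formal. For (R0) one unwinds both sides through Definition \ref{defel} into ESD transvections and checks that they agree on $e_1,\dots,e_n,e_{-n},\dots,e_{-1}$ and on $V_0$. For (R1) one applies the addition formula with $u=e_{-j}$, the correction term $B(-e_ix\epsilon_j,-e_iy\epsilon_j)$ vanishing because $B(e_i,e_i)=0$; for (R2) one applies it with $u=e_i$, where the correction term equals $\bar\epsilon_{-i}\bar 1^{-1}B_0(v_0,w_0)\epsilon_{-i}$ and is exactly what converts the sum of the two scalar parameters into the scalar attached to the Heisenberg product $(v_0,x)\+(w_0,y)$. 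Thus (R1), (R2) merely record that $T_{ij}(\cdot)$ and $T_i(\cdot)$ are images of homomorphisms of the kind described above.

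For (R3)-(R9) I would use $[\sigma,\tau]={}^{\sigma}\tau\cdot\tau^{-1}$. In each of these relations the two transvections can be arranged --- interchanging them and passing to the inverse commutator if necessary, which is harmless since the inverse of an ESD transvection is again one --- so that the conjugating transvection $\sigma$ fixes the isotropic vector $u$ of the other one $\tau=T_{u,v}(a)$; this is a finite check of indices in each case. Then ${}^{\sigma}\tau=T_{u,\sigma v}(a)$, and since $(v,b)\mapsto T_{u,v}(b)$ is a homomorphism, $[\sigma,\tau]$ is the ESD transvection with isotropic vector $u$ attached to $(\sigma v-v,\ B(v,v)-B(\sigma v,v))$, the Heisenberg product of $(\sigma v,a)$ with the inverse of $(v,a)$. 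One now reads $\sigma v$ off the explicit action of $\sigma$, simplifies the $B$-terms --- which collapse either by $B(e_i,e_j)=0$ for $i\neq\pm j$ or, in the extra-short cases, by the orthogonality of $V_h$ and $V_0$, and this is what keeps the right-hand sides as clean as stated --- and recognises the resulting ESD transvection, using the addition formula once more to split it when the right-hand side is a product (as in (R8)) and, when its vector parameter comes out proportional to $u=e_i$ (as in (R7) and (R9)), the auxiliary identity $T_{e_i,e_ic}(0)=T_{e_i,0}(\bar c\bar 1^{-1}+\bar 1c)$, which re-expresses it as an extra short root transvection. For (R3) and (R4) one has $\sigma v=v$ outright, so $[\sigma,\tau]=id_V$.

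The main obstacle is bookkeeping rather than ideas: everything reduces to manipulating ESD transvections with a common isotropic vector, but in (R8) and (R9) --- where a single commutator produces a product of a short and an extra-short root transvection --- matching the scalar parameters on the two sides requires careful handling of the twist $\bar 1^{-1}$, of the signs $\epsilon_i$ and of products such as $\epsilon_i\epsilon_{-i}$ and $\bar\epsilon_{-i}\epsilon_{-i}$, together with repeated use of $\overline{xy}=\bar y\bar 1^{-1}\bar x$. One must also check at each intermediate step that the pairs $(v,a)$ produced still lie in $\K$ (resp. in $\K_0$ in the extra-short case) and that the required isotropy and orthogonality conditions hold, so that the ESD transvections occurring are genuinely defined; this is routine but should not be omitted.
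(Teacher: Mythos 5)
Your proposal is correct and is essentially the paper's own argument: the paper's entire proof is the phrase ``straightforward computation,'' and you simply organize that computation via the ESD calculus of Lemma \ref{lemesd} (conjugation formula ${}^{\sigma}T_{u,v}(a)=T_{\sigma u,\sigma v}(a)$ plus additivity in the second argument), reducing each of (R3)--(R9) to $[\sigma,\tau]=T_{u,\sigma v-v}\bigl(B(v,v)-B(\sigma v,v)\bigr)$ when $\sigma$ fixes $u$. The index checks ensuring $\sigma u=u$ in every case and your auxiliary identity $T_{e_i,e_ic}(0)=T_{e_i,0}(\bar c\bar 1^{-1}+\bar 1c)$ used for (R7) and (R9) both verify against Definition \ref{defel}, so the remaining work is exactly the $\epsilon$- and $\bar 1$-bookkeeping you describe.
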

\begin{proof}
Straightforward computation.
\end{proof}
\begin{definition}\label{42}
If $i,j\in\Theta$ such that $i\neq\pm j$, then the element 
\begin{align*}
P_{ij}:=T_{ij}(1)T_{ji}(-1)T_{ij}(1)                                                                                                                                                                                                                                                                                                                                                                                                                                                                                             
\end{align*}
of $\EU_{2n}(\V_0)$ is called an {\it elementary permutation}.
One checks easily that 
\begin{align*}
&P^{hh}_{ij}=e-e^{ii}-e^{jj}-e^{-i,-i}-e^{-j,-j}+e^{ij}-e^{ji}+e^{-i,-j}\epsilon_{-i}\epsilon_{-j}^{-1}-e^{-j,-i}\epsilon_{-j}\epsilon_{-i}^{-1},\\
&P^{h0}_{ij}=0=P^{0h}_{ij},\quad P_{ij}^{00}=id_{V_0}.
\end{align*}
It is easy to show that $\widetilde P_{ij}=P_{ji}$. 
\end{definition}
\begin{lemma}\label{43}
Let $i,j,k\in\Theta$ such that $i\neq \pm j$ and $k\neq \pm i,\pm j$. Let $x\in R$ and $(v_0,x)\in \K_0$. Then 
\begin{enumerate}[(i)]
\item $^{P_{ki}}T_{ij}(x)=T_{kj}(x)$,
\item $^{P_{kj}}T_{ij}(x)=T_{ik}(x)$ and
\item $^{P_{-k,-i}}T_{i}(v_0,x)=T_{k}(v_0,x)$.
\end{enumerate}
\end{lemma}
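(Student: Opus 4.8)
The plan is to deduce all three relations from the conjugation formula ${}^{\sigma}T_{u,v}(x)=T_{\sigma u,\sigma v}(x)$ of Lemma \ref{lemesd}, applied to $\sigma=P_{ki}$, $P_{kj}$ and $P_{-k,-i}$, together with the descriptions of the elementary transvections as ESD transvections given in Definition \ref{defel}. The only extra ingredient I would need is the action of an elementary permutation on the standard basis, which one reads off the matrix formula for $P^{hh}_{ab}$ in Definition \ref{42} (recalling that $P^{h0}_{ab}=P^{0h}_{ab}=0$ and $P^{00}_{ab}=id_{V_0}$): for $a\neq\pm b$ one has $P_{ab}e_a=-e_b$, $P_{ab}e_b=e_a$, $P_{ab}e_{-a}=-e_{-b}\epsilon_{-b}\epsilon_{-a}^{-1}$, $P_{ab}e_{-b}=e_{-a}\epsilon_{-a}\epsilon_{-b}^{-1}$, while $P_{ab}$ fixes every other basis vector of $V_h$ as well as $V_0$ pointwise.

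With this, (i) is immediate: since $-j\neq\pm i,\pm k$, the permutation $P_{ki}$ fixes the isotropic vector $e_{-j}$ and, being $R$-linear, sends $-e_ix\epsilon_j$ to $-e_kx\epsilon_j$, so ${}^{P_{ki}}T_{ij}(x)={}^{P_{ki}}T_{e_{-j},-e_ix\epsilon_j}(0)=T_{e_{-j},-e_kx\epsilon_j}(0)=T_{kj}(x)$. For (ii) I would first rewrite $T_{ij}(x)=T_{-j,-i}(\epsilon_{-j}\bar x\epsilon_i)=T_{e_i,-e_{-j}\epsilon_{-j}\bar x\epsilon_i\epsilon_{-i}}(0)$ using (R0); this is the convenient form because now $P_{kj}$ fixes the isotropic vector $e_i$ (as $i\neq\pm j,\pm k$) and sends $e_{-j}$ to $e_{-k}\epsilon_{-k}\epsilon_{-j}^{-1}$, whence ${}^{P_{kj}}T_{ij}(x)=T_{e_i,-e_{-k}\epsilon_{-k}\epsilon_{-j}^{-1}\epsilon_{-j}\bar x\epsilon_i\epsilon_{-i}}(0)=T_{e_i,-e_{-k}\epsilon_{-k}\bar x\epsilon_i\epsilon_{-i}}(0)=T_{-k,-i}(\epsilon_{-k}\bar x\epsilon_i)=T_{ik}(x)$, the last two equalities again by (R0). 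Note that in this form the factor $\epsilon_{-j}^{-1}$ produced by the permutation cancels at once against the $\epsilon_{-j}$ occurring in $T_{-j,-i}$, so no genuine $\epsilon$-arithmetic is involved.

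Part (iii) is the only one in which the isotropic vector is genuinely rescaled, and is the step where a little care is needed. Since $P_{-k,-i}$ sends $e_i$ to $e_k\epsilon_k\epsilon_i^{-1}$ and fixes $V_0$, Lemma \ref{lemesd} gives ${}^{P_{-k,-i}}T_i(v_0,x)=T_{e_k\epsilon_k\epsilon_i^{-1},\,v_0\epsilon_{-i}}(\bar\epsilon_{-i}\bar 1^{-1}x\epsilon_{-i})$, and I would then identify this with $T_k(v_0,x)=T_{e_k,\,v_0\epsilon_{-k}}(\bar\epsilon_{-k}\bar 1^{-1}x\epsilon_{-k})$ by evaluating both endomorphisms on the standard basis, using the explicit formulas for $T_i^{hh}$, $T_i^{h0}$, $T_i^{0h}$, $T_i^{00}$ recalled in Section 3: on $e_{-k}$ both maps give $e_{-k}+e_k\epsilon_kx-v_0$ (because $\epsilon_k\epsilon_i^{-1}\cdot\epsilon_i=\epsilon_k$), on $w_0\in V_0$ both give $w_0-e_k\epsilon_kB_0(v_0,w_0)$, and on the remaining basis vectors both act as the identity (here using $\epsilon_i\epsilon_k^{-1}\cdot\epsilon_k\epsilon_i^{-1}=1$ when checking $e_i$ and $e_k$). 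Conceptually there is no obstacle anywhere: the whole argument is a straightforward if slightly fiddly bookkeeping exercise, and the only things to watch are keeping scalars on the correct side in the non-commutative setting and — should one prefer to brute-force the matrix products instead of using the (R0)-reduction — the identity $\epsilon_a\epsilon_{-a}=-\bar 1^{-1}$, which is immediate from $\epsilon_a=\bar 1^{-1}$ for $a\in\Theta_+$ and $\epsilon_a=-1$ for $a\in\Theta_-$. I note finally that all three relations can alternatively be derived purely formally from the Steinberg-type relations (R0)--(R9) of Lemma \ref{39} by writing $P_{ab}=T_{ab}(1)T_{ba}(-1)T_{ab}(1)$ and conjugating factor by factor — (i) and (ii) coming out of (R1), (R3), (R5), and (iii) additionally requiring (R2) and (R8) — but I expect the ESD-transvection route above to be the shorter one.
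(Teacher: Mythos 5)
Your argument is correct, but it is not the route the paper takes: the paper's proof of this lemma is purely relation-theoretic, deducing (i)--(iii) from the Steinberg-type relations (R0)--(R9) of Lemma \ref{39} by writing $P_{ab}=T_{ab}(1)T_{ba}(-1)T_{ab}(1)$ and conjugating factor by factor (the alternative you mention only in your last sentence). Your primary route instead goes back to the ESD picture: you combine the conjugation formula $^{\sigma}T_{u,v}(x)=T_{\sigma u,\sigma v}(x)$ of Lemma \ref{lemesd} with the explicit action of $P_{ab}$ on the basis read off Definition \ref{42}, and your basis formulas ($P_{ab}e_a=-e_b$, $P_{ab}e_b=e_a$, $P_{ab}e_{-a}=-e_{-b}\epsilon_{-b}\epsilon_{-a}^{-1}$, $P_{ab}e_{-b}=e_{-a}\epsilon_{-a}\epsilon_{-b}^{-1}$, identity elsewhere) are right. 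Parts (i) and (ii) then come out cleanly, and your (R0)-rewriting in (ii) is a nice device to avoid rescaling the isotropic vector. In (iii) the identification $T_{e_k\epsilon_k\epsilon_i^{-1},\,v_0\epsilon_{-i}}(\bar\epsilon_{-i}\bar 1^{-1}x\epsilon_{-i})=T_k(v_0,x)$ does hold and your stated values on $e_{-k}$ and on $V_0$ are correct, though your parenthetical justification is slightly abbreviated: besides $\epsilon_k\epsilon_i^{-1}\epsilon_i=\epsilon_k$ one also has to check that the coefficient of $v_0$ comes out as $-1$, i.e.\ that $\epsilon_{-i}\,\overline{\epsilon_k\epsilon_i^{-1}}\,\bar 1^{-1}\epsilon_{-k}^{-1}=1$, a short case check using $\bar 1^{-1}\bar\epsilon_{-a}\bar 1^{-1}=-\epsilon_a$ (equivalently your identity $\epsilon_a\epsilon_{-a}=-\bar 1^{-1}$); this is exactly the kind of bookkeeping you flag, so it is not a gap. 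In terms of trade-offs, your approach is self-contained at the level of the geometry of $\V$ and yields the identities in one stroke, while the paper's relation-based derivation stays inside the generators-and-relations calculus of $\EU_{2n}(\V_0)$, which is the formalism reused throughout the later commutator computations.
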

\begin{proof}
Follows from the relations in Lemma \ref{39}.
\end{proof}

\begin{definition}
Let $\I_0=(I,\M_0)$ denote an odd form ideal on $\V_0$. An elementary short root transvection $T_{ij}(x)$ is called {\it $\I_0$-elementary} if $x\in I$. An elementary extra short root transvection $T_{i}(\xi)$ is called {\it $\I_0$-elementary} if $\xi\in \M_0$. If an element of $U_{2n}(\V_0)$ is an $\I_0$-elementary short or extra short root transvection, then it is called an {\it $\I_0$-elementary transvection}. The subgroup $\EU_{2n}(\I_0)$ of $\EU_{2n}(\V_0)$ generated by the $\I_0$-elementary transvections is called the {\it preelementary subgroup of level $\I_0$}. Its normal closure $\EU_{2n}(\V_0,\I_0)$ in $\EU_{2n}(\V_0)$ is called the {\it elementary subgroup of level $\I_0$}.
\end{definition}

\subsection{Congruence subgroups}
If $\I_0=(I,\M_0)$ is an odd form ideal on $\V_0$, then $\I:=(I,\M)$ where $\M=\K_h\hp I\+\M_0$ is an odd form ideal on $\V$. Conversely, if $\I=(I,\M)$ is an odd form ideal on $\V$, then $\I_0:=(I,\M_0)$ where $\M_0=\{Q^0(v)\+(0,x)\mid (v,x)\in M\}$ is an odd form ideal on $\V_0$. This yields a 1-1 corespondence between odd form ideals on $\V_0$ and odd form ideals on $\V$.\\

In this subsection $\I_0=(I,\M_0)$ denotes an form ideal on $\V_0$ and $\I=(I,\M)$ the corresponding odd form ideal on $\V$. We denote the principal congruence subgroup $\U(\V,\I)$ by $\U_{2n}(\V_0, \I_0)$ and the normalized principal congruence subgroup $\NU(\V,\I)$ by $\NU_{2n}(\V_0, \I_0)$. The even part of $V_0$ is denoted by $V^{ev}_0$.
\begin{lemma}\label{lemoddrel}
Let $\sigma \in \U_{2n}(\V_0)$. Then $\sigma\in \U_{2n}(\V_0,\I_0)$ iff Conditions (i) and (ii) below are satisfied.
\begin{enumerate}[(i)]
\item $Q(\sigma e_i)\minus Q(e_i)\in \M$ for any $i\in\Theta$.
\item $Q(\sigma v_0)\minus Q(v_0)\in \M$ for any $v_0\in V^{ev}_0$.
\end{enumerate}
\end{lemma}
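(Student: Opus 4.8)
The plan is to treat the two implications separately. Unwinding the definitions, $\U_{2n}(\V_0,\I_0)=\U(\V,\I)$ is precisely the set of $\sigma\in\U(\V)$ with $Q(\sigma v)\minus Q(v)\in\M$ for every $v\in V^{ev}$, so the content of the lemma is that this family of conditions is equivalent to its restriction to the basis vectors $e_i$ and to $V_0^{ev}$. The implication ``$\Rightarrow$'' is then immediate: since $F_h(e_i,e_i)=0$ we have $(e_i,0)\in\K_h\subseteq\K$, so $e_i\in V^{ev}$, and since $(v_0,x)\in\K_0$ implies $(v_0,x)\in\K$ we have $V_0^{ev}\subseteq V^{ev}$; specializing the defining condition to $v=e_i$ and to an arbitrary $v_0\in V_0^{ev}$ yields (i) and (ii).

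For ``$\Leftarrow$'' I would rerun the computation from the proof of Lemma~\ref{lemodd}, tracking levels. That proof produces, for an arbitrary $v=v_h+v_0$ with $v_h=\sum_{i\in\Theta}e_iv_i$, the exact identity in $\h$
\[
Q(\sigma v)\minus Q(v)=\Bigl(\plus\limits_{i\in\Theta}\bigl(Q(\sigma e_i)\minus Q(e_i)\bigr)\hp v_i\Bigr)\+\bigl(Q(\sigma v_0)\minus Q(v_0)\bigr)\+(0,t+\bar t)\+(0,s+\bar s),
\]
with $t=\sum_{i<j}\bar v_i\epsilon_i(\sigma_{-i,j}-\delta_{-i,j})v_{-i}$ and $s=\sum_{i\in\Theta}\bar v_i\epsilon_i(\sigma^{0h}v_0)_{-i}$. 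Fix $v\in V^{ev}$; then $v_0\in V_0^{ev}$ (decompose a witness $(v,x)\in\K=\K_h+\K_0$), while the scalars $v_i$ are unconstrained. I would then check the four summands lie in $\M$ individually. The first is a $\+$-product of terms $\bigl(Q(\sigma e_i)\minus Q(e_i)\bigr)\hp v_i$, each in $\M$ since $Q(\sigma e_i)\minus Q(e_i)\in\M$ by (i) and $\M$ is $\hp$-stable, so the product lies in $\M$ because $\M$ is a subgroup of $\h$. The second summand is in $\M$ by (ii). For the last two, by Definition~\ref{defrelfp} it suffices to show $t\in I$ and $s\in I$, since then $(0,t+\bar t),(0,s+\bar s)\in\M_{\min}\subseteq\M$; adding up four elements of the subgroup $\M$ then finishes the proof.

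The main obstacle is the step $t,s\in I$, which I would obtain from the other half of the containment, $\M\subseteq\M_{\max}$. By (i), $Q(\sigma e_j)\minus Q(e_j)=(\sigma e_j-e_j,\,B(e_j-\sigma e_j,e_j))\in\M\subseteq\M_{\max}$, so $B(\sigma e_j-e_j,w)\in I$ for all $w\in V^{ev}$; taking $w=e_i$ and expanding in the basis gives $B(\sigma e_j-e_j,e_i)=\overline{\bigl(\sigma_{-i,j}-\delta_{-i,j}\bigr)}\,\bar 1^{-1}B_h(e_{-i},e_i)$, where $B_h(e_{-i},e_i)=-\epsilon_i^{-1}$ is a unit of $R$. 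Since $\bar I=I$ and $I$ is two-sided, this forces $\sigma_{pq}-\delta_{pq}\in I$ for all $p,q\in\Theta$, hence $t\in I$. Likewise, by (ii) and $v_0\in V_0^{ev}$, $Q(\sigma v_0)\minus Q(v_0)=(\sigma v_0-v_0,\,B(v_0-\sigma v_0,v_0))\in\M_{\max}$, so $B(\sigma v_0-v_0,e_k)\in I$, and this pairing is a unit multiple of $\overline{(\sigma^{0h}v_0)_{-k}}$, whence $(\sigma^{0h}v_0)_i\in I$ for all $i\in\Theta$ and $s\in I$. The only genuine care is needed in these two coordinate computations — identifying $B(\,\cdot\,,e_k)$ with the entries $\sigma_{pq}$ and $(\sigma^{0h}v_0)_p$ up to the units $B_h(e_{-i},e_i)=-\epsilon_i^{-1}$, and keeping the signs $\epsilon_i$ and the pseudoinvolution straight so that $I$-membership passes cleanly across the unit and the bar; everything else is bookkeeping with the group laws of $\h$ and the stability of $\M$.
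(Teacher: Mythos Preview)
Your proof is correct and follows essentially the same route as the paper's: both directions match, and for ``$\Leftarrow$'' you invoke the identity (5) from the proof of Lemma~\ref{lemodd}, extract $\sigma^{hh}\equiv e\bmod I$ and $(\sigma^{0h}v_0)_i\in I$ from the containment $\M\subseteq\M_{\max}$ via the pairings $B(\,\cdot\,,e_k)$, and then conclude that each of the four summands lies in $\M$. The paper's version is terser (it leaves the membership of the first two summands implicit), but the argument is the same; your explicit check that $v\in V^{ev}$ forces $v_0\in V_0^{ev}$ is a point the paper glosses over.
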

\begin{proof}
($\Rightarrow$) Suppose that $\sigma \in U_{2n}(\V_0,\I_0)$. Then clearly $Q(\sigma e_i)\minus Q(e_i)\in \M$ for any $i\in\Theta$ and $Q(\sigma v_0)\minus Q(v_0)\in \M$ for any $v_0\in V^{ev}_0$ since $\sigma$ preserves $Q_\M$. \\
($\Leftarrow$) Suppose that Conditions (i) and (ii) are satisfied. If $i\in \Theta$, then $Q(\sigma e_i)\minus Q(e_i)=(\sigma e_i-e_i,x)$ for some $x\in R$. Since $Q(\sigma e_i)\minus Q(e_i)\in \M\subseteq\M_{\max}$, it follows that $B(e_j,\sigma e_i-e_i)\in I$ for any $j\in \Theta$. But one checks easily that $B(e_j,\sigma e_i-e_i)=-\epsilon_{-j}^{-1}(\sigma_{-j,i}-\delta_{-j,i})$. Thus $\sigma^{hh}\equiv e\bmod I$. If $v_0\in V^{ev}_0$, then $Q(\sigma v_0)\minus Q(v_0)=(\sigma v_0-v_0,x)$ for some $x\in R$. Since $Q(\sigma v_0)\minus Q(v_0)\in \M\subseteq\M_{\max}$, it follows that $B(e_j,\sigma v_0-v_0)\in I$ for any $j\in \Theta$. But one checks easily that $B(e_j,\sigma v_0-v_0)=-\epsilon_{-j}^{-1}(\sigma^{0h}v_0)_{-j}$ and therefore $(\sigma^{0h}v_0)_{-j}\in I$. Now it follows from Equation (5) in the proof of Lemma \ref{lemodd} that $Q(\sigma v)\minus Q(v)\in \M$ for any $v\in V^{ev}$. Thus $\sigma\in U_{2n}(\V_0,\I_0)$.
\end{proof}

\begin{lemma}\label{lemtecrel}
Let $v,w\in V$ such that $v_h-w_h\in V_hI$ (i.e. $v_i-w_i\in I$ for any $i\in \Theta$). Then 
\[Q(v)\equiv Q(w)\bmod \M~\Leftrightarrow~Q^0(v)\equiv Q^0(w)\bmod \M_0.\]
\end{lemma}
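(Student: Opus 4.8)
The plan is to mirror the proof of Lemma \ref{lemtec} almost verbatim, but carrying the level $\M$ (resp.\ $\M_0$) instead of $\K$ (resp.\ $\K_0$) throughout, and using the hypothesis $v_h-w_h\in V_hI$ to control the extra terms that arise. First I would unwind the left-hand side: $Q(v)\equiv Q(w)\bmod\M$ is equivalent to $(v-w,B(w-v,w))\in\M$. As in the unrelated case, I subtract the element $(v_h-w_h,F_h(v_h-w_h,v_h-w_h))$; the key point now is that this element lies in $\K_h\hp I$, hence in $\M$, precisely because $v_i-w_i\in I$ for every $i\in\Theta$ (so $F_h(v_h-w_h,v_h-w_h)$ and the module component both have ``level $I$''). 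This is exactly the place where the hypothesis $v_h-w_h\in V_hI$ is needed, and it is the only substantive difference from Lemma \ref{lemtec}.

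Next I would observe that after this subtraction the expression becomes $Q^0(v-w)\+(0,B(w-v,w))$, using the defining formula for $Q^0$ exactly as in the proof of Lemma \ref{lemtec}. So the left-hand side is equivalent to $Q^0(v-w)\+(0,B(w-v,w))\in\M$, and since this element has $V_0$-component $(v-w)_0=v_0-w_0$ and lives in $\h_0$, membership in $\M$ is equivalent to membership in $\M_0$ (recall $\M=\K_h\hp I\+\M_0$, and an element of $\h_0$ lies in $\M$ iff it lies in $\M_0$, because the $\K_h\hp I$ summand contributes nothing to the $V_0$-component and its $R$-component is absorbed). Finally I would compute, just as before, that
\[
Q^0(v-w)\+(0,B(w-v,w))\minus\bigl(Q^0(v)\minus Q^0(w)\bigr)=(0,-F_h(w_h,w_h-v_h)-\overline{F_h(w_h,w_h-v_h)}),
\]
and note that since $w_h-v_h\in V_hI$, the form $F_h(w_h,w_h-v_h)$ lies in $I$, so the right-hand side lies in $\K_h\hp I\+\{(0,x+\bar x)\mid x\in I\}\subseteq(\M_0)_{\min}\subseteq\M_0$; wait — more carefully, this element lies in $(\M_0)_{\min}$ directly, being of the form $(0,y+\bar y)$ with $y=-F_h(w_h,w_h-v_h)\in I$. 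Hence $Q^0(v-w)\+(0,B(w-v,w))\equiv Q^0(v)\minus Q^0(w)\bmod\M_0$, and the chain of equivalences closes up to give $Q(v)\equiv Q(w)\bmod\M\iff Q^0(v)\equiv Q^0(w)\bmod\M_0$.

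I do not expect any real obstacle here; the statement is the ``relative'' analogue of Lemma \ref{lemtec} and the proof is structurally identical. The one thing to be careful about is verifying that the hypothesis $v_h-w_h\in V_hI$ is genuinely what makes each auxiliary term land in the correct relative form parameter — both the term $(v_h-w_h,F_h(v_h-w_h,v_h-w_h))\in\K_h\hp I\subseteq\M$ in the first step and the correction term $(0,-F_h(w_h,w_h-v_h)-\overline{F_h(w_h,w_h-v_h)})\in(\M_0)_{\min}$ in the last step. Since $\M$ contains $\K_h\hp I\+\{(0,x+\bar x)\mid x\in I\}$ by definition (it contains $\M_{\min}$, and $\M=\K_h\hp I\+\M_0\supseteq\K_h\hp I$), and likewise $\M_0\supseteq(\M_0)_{\min}$, these inclusions are immediate. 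Everything else is the routine bookkeeping already carried out in the proof of Lemma \ref{lemtec}.
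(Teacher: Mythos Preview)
Your proposal is correct and is exactly the approach the paper intends: its proof consists of the single line ``See the proof of Lemma \ref{lemtec},'' and you have correctly identified the two places where the hypothesis $v_h-w_h\in V_hI$ is needed to push the auxiliary terms into $\M$ (resp.\ $(\M_0)_{\min}$) rather than merely into $\K$ (resp.\ $(\K_0)_{\min}$). The passage from ``$\in\M$'' to ``$\in\M_0$'' for an element of $\h_0$ is justified by the 1--1 correspondence $\M_0=\{Q^0(v)\+(0,x)\mid (v,x)\in\M\}$ stated at the beginning of Subsection~4.3.
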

\begin{proof}
See the proof of Lemma \ref{lemtec}.
\end{proof}

\begin{proposition}\label{propoddrel}
Let $\sigma \in \U_{2n}(\V_0)$. Then $\sigma\in \U_{2n}(\V_0,\I_0)$ iff the Conditions (i)-(iii) below are satisfied.
\begin{enumerate}[(i)]
\item $\sigma^{hh}\equiv e\bmod I$.
\item $Q^0(\sigma e_i)\in \M_0$ for any $i\in\Theta$.
\item $Q^0(\sigma v_0)\minus Q^0(v_0)\in \M_0$ for any $v_0\in V^{ev}_0$.
\end{enumerate}
\end{proposition}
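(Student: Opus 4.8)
The plan is to deduce Proposition \ref{propoddrel} from Lemma \ref{lemoddrel} by translating its two conditions, which are stated in terms of $Q$ and $\M$ on $V$, into the equivalent conditions in terms of $Q^0$ and $\M_0$ on $V_0$, using Lemma \ref{lemtecrel} as the bridge. The first thing I would do is observe that condition (i) of the proposition, $\sigma^{hh}\equiv e\bmod I$, is exactly the hypothesis needed to apply Lemma \ref{lemtecrel}: for each $i\in\Theta$ the vector $\sigma e_i$ has $(\sigma e_i)_h-(e_i)_h=\sigma^{hh}e_i-e_i\in V_hI$, and for $v_0\in V^{ev}_0$ the vector $\sigma v_0$ has $(\sigma v_0)_h-(v_0)_h=\sigma^{0h}v_0\in V_hI$, since by Lemma \ref{lemoddrel} (already using $\sigma^{hh}\equiv e$) one checks via Equation (5)-type formulas that $(\sigma^{0h}v_0)_j\in I$ whenever $\sigma^{hh}\equiv e\bmod I$, or more directly one extracts this from condition (i) of Lemma \ref{lemoddrel} as in that lemma's proof.

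For the forward implication, suppose $\sigma\in\U_{2n}(\V_0,\I_0)$. By Lemma \ref{lemoddrel}, $Q(\sigma e_i)\minus Q(e_i)\in\M$ for all $i\in\Theta$ and $Q(\sigma v_0)\minus Q(v_0)\in\M$ for all $v_0\in V^{ev}_0$; and, as recalled in the proof of that lemma, these already force $\sigma^{hh}\equiv e\bmod I$, giving (i). With (i) in hand, $\sigma e_i$ and $e_i$ satisfy the hypothesis of Lemma \ref{lemtecrel} (here $w=e_i$, so $w_h=e_i\in V_h$ and $v_h-w_h=\sigma^{hh}e_i-e_i\in V_hI$), so $Q(\sigma e_i)\equiv Q(e_i)\bmod\M$ gives $Q^0(\sigma e_i)\equiv Q^0(e_i)\bmod\M_0$; and $Q^0(e_i)=(0,-F_h(e_i,e_i))=(0,0)\in\M_0$ since $e_i$ is isotropic in $V_h$, so (ii) holds. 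Similarly, for $v_0\in V^{ev}_0$ we have $v_0\in V_0$ so $(v_0)_h=0$ and $\sigma^{0h}v_0\in V_hI$ by (i), hence Lemma \ref{lemtecrel} applies to $v=\sigma v_0$, $w=v_0$ and yields $Q^0(\sigma v_0)\minus Q^0(v_0)\in\M_0$, which is (iii).

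Conversely, suppose (i)--(iii) hold. From (i), both $(\sigma e_i, e_i)$ and $(\sigma v_0, v_0)$ (for $v_0\in V^{ev}_0$) satisfy the hypothesis of Lemma \ref{lemtecrel}, exactly as above. Then (ii) together with $Q^0(e_i)=(0,0)\in\M_0$ gives $Q^0(\sigma e_i)\equiv Q^0(e_i)\bmod\M_0$, so by Lemma \ref{lemtecrel}, $Q(\sigma e_i)\minus Q(e_i)\in\M$; and (iii) gives $Q^0(\sigma v_0)\equiv Q^0(v_0)\bmod\M_0$, so $Q(\sigma v_0)\minus Q(v_0)\in\M$. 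These are precisely conditions (i) and (ii) of Lemma \ref{lemoddrel}, hence $\sigma\in\U_{2n}(\V_0,\I_0)$.

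The only genuinely delicate point, and the step I expect to be the main obstacle, is the bookkeeping that condition (i) of the proposition is the right hypothesis to invoke Lemma \ref{lemtecrel}, i.e. verifying $\sigma^{0h}v_0\in V_hI$ for $v_0\in V^{ev}_0$. In the forward direction this is extracted from Lemma \ref{lemoddrel} (membership in $\M\subseteq\M_{\max}$ forces $B(e_j,\sigma v_0-v_0)\in I$, and $B(e_j,\sigma v_0-v_0)=-\epsilon_{-j}^{-1}(\sigma^{0h}v_0)_{-j}$), exactly as in that lemma's proof; in the backward direction it is immediate from the assumed (i) since $(\sigma v_0)_h=\sigma^{0h}v_0$ and one must argue that $\sigma^{hh}\equiv e\bmod I$ alone does not obviously control $\sigma^{0h}$ — rather, one uses that $\sigma\in\U_{2n}(\V_0)$ and Lemma \ref{lemodd}(ii), $(\widetilde\sigma^{0h}v_0)_i=-\epsilon_iB_0(\sigma^{h0}e_{-i},v_0)$, combined with the fact that $\sigma^{h0}e_{-i}$ lies in $V_0I$ when $\sigma^{hh}\equiv e$, to conclude $\sigma^{0h}v_0\in V_hI$. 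I would isolate this last congruence as a short separate computation before running the two implications, to keep the main argument clean.
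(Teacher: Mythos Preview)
Your overall plan is exactly the paper's: the proof is ``Follows from Lemmas \ref{lemoddrel} and \ref{lemtecrel}'', and you have correctly unpacked what that means, including the one step that is not automatic, namely verifying the hypothesis $v_h-w_h\in V_hI$ of Lemma~\ref{lemtecrel} for the pair $(\sigma v_0,v_0)$ in the backward implication.

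However, your sketch of how to discharge that step is not right. Two points:
\begin{enumerate}
\item The claim ``$\sigma^{h0}e_{-i}$ lies in $V_0I$ when $\sigma^{hh}\equiv e$'' is false in general; condition~(i) says nothing about $\sigma^{h0}$. What you actually have is condition~(ii): $Q^0(\sigma e_{-i})=(\sigma^{h0}e_{-i},*)\in\M_0\subseteq(\M_0)_{\max}$, so $B_0(\sigma^{h0}e_{-i},v_0)\in I$ for every $v_0\in V_0^{ev}$. That is the correct input for Lemma~\ref{lemodd}(ii).
\item Feeding this into Lemma~\ref{lemodd}(ii) gives $(\widetilde\sigma^{0h}v_0)_i=-\epsilon_iB_0(\sigma^{h0}e_{-i},v_0)\in I$, i.e.\ $\widetilde\sigma^{0h}v_0\in V_hI$, not $\sigma^{0h}v_0\in V_hI$. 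You still have to pass from $\widetilde\sigma$ to $\sigma$. One clean way: from $\widetilde\sigma\sigma=\mathrm{id}$ read off the $(V_0\to V_h)$-block identity $\widetilde\sigma^{hh}\,\sigma^{0h}=-\,\widetilde\sigma^{0h}\,\sigma^{00}$. Since $\sigma^{00}$ maps $V_0^{ev}$ to $V_0^{ev}$, the right-hand side lies in $V_hI$; and since $\widetilde\sigma^{hh}\equiv e\bmod I$ (by Lemma~\ref{lemodd}(i) and $\bar I=I$), the left-hand side differs from $\sigma^{0h}v_0$ by an element of $V_hI$. Hence $\sigma^{0h}v_0\in V_hI$, and Lemma~\ref{lemtecrel} applies.
\end{enumerate}
With this correction in place, your argument goes through and coincides with the paper's route.
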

\begin{proof}
Follows from the Lemmas \ref{lemoddrel} and \ref{lemtecrel}.
\end{proof}

\begin{corollary}\label{cor1}
$\EU_{2n}(\I_0)\subseteq \U_{2n}(\V_0,\I_0)$
\end{corollary}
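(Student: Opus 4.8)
The strategy is to check that each generator of the preelementary subgroup $\EU_{2n}(\I_0)$ lies in the principal congruence subgroup $\U_{2n}(\V_0,\I_0)$; since $\U_{2n}(\V_0,\I_0)$ is a subgroup of $\U_{2n}(\V_0)$, the inclusion follows. Thus it suffices to treat an $\I_0$-elementary short root transvection $T_{ij}(x)$ (with $i\neq\pm j$, $x\in I$) and an $\I_0$-elementary extra short root transvection $T_i(v_0,x)$ (with $(v_0,x)\in\M_0$). For both I would apply Proposition \ref{propoddrel}: a $\sigma\in\U_{2n}(\V_0)$ lies in $\U_{2n}(\V_0,\I_0)$ as soon as (i) $\sigma^{hh}\equiv e\bmod I$, (ii) $Q^0(\sigma e_i)\in\M_0$ for all $i\in\Theta$, and (iii) $Q^0(\sigma v_0)\minus Q^0(v_0)\in\M_0$ for all $v_0\in V_0^{ev}$. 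Two remarks will be used over and over: $Q^0(v)=(v_0,-F_h(v_h,v_h))$ equals $(0,0)$ whenever $v_h$ is supported on two coordinate positions that are not negatives of one another --- in particular $Q^0(e_k)=(0,0)$ for each $k\in\Theta$ --- and $(0,0)\in(\M_0)_{\min}\subseteq\M_0$; moreover $\M_0\subseteq\K_0\subseteq\K_{\max}=\ker\tr$, so $(v_0,x)\in\M_0$ forces $x-\bar x=B_0(v_0,v_0)$.

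For $\sigma=T_{ij}(x)$ with $x\in I$, condition (i) is immediate from the matrix $T_{ij}^{hh}(x)=e+e^{ij}x+e^{-j,-i}\epsilon_{-j}\bar x\epsilon_i$ together with $\bar I=I$, and condition (iii) is trivial because $T_{ij}(x)$ fixes $V_0$ pointwise, so $Q^0(T_{ij}(x)v_0)\minus Q^0(v_0)=(0,0)$. For condition (ii): $T_{ij}(x)$ fixes each $e_k$ with $k\neq -i,j$, while $T_{ij}(x)e_{-i}=e_{-i}+e_{-j}\epsilon_{-j}\bar x\epsilon_i$ and $T_{ij}(x)e_j=e_j+e_ix$ both lie in $V_h$ and are supported on the position pairs $\{-i,-j\}$ resp. $\{i,j\}$; since $i\neq\pm j$, neither pair consists of negatives, so all three values of $Q^0$ equal $(0,0)\in\M_0$.

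For $\sigma=T_i(v_0,x)$ with $(v_0,x)\in\M_0$ we first note $x\in I$ (because $\M_0\subseteq\M_{\max}$), so condition (i) follows from $T_i^{hh}(v_0,x)=e+e^{i,-i}\epsilon_ix$. Condition (iii) holds because $T_i(v_0,x)w_0=w_0-e_i\epsilon_iB_0(v_0,w_0)$ has $V_h$-part supported on the single position $i$, whence $Q^0(T_i(v_0,x)w_0)=(w_0,0)=Q^0(w_0)$ and $Q^0(T_i(v_0,x)w_0)\minus Q^0(w_0)=(0,0)$. For condition (ii) the cases $k\neq -i$ give $Q^0(e_k)=(0,0)$, and the only substantial computation is $Q^0(T_i(v_0,x)e_{-i})=Q^0(e_{-i}+e_i\epsilon_ix-v_0)$: its $V_0$-part is $-v_0$ and its $V_h$-part $e_{-i}+e_i\epsilon_ix$ contributes $-F_h(e_{-i}+e_i\epsilon_ix,e_{-i}+e_i\epsilon_ix)$. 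Evaluating $F_h$ --- treating $i\in\Theta_+$ and $i\in\Theta_-$ separately and using $\overline{\bar 1^{-1}}=\bar 1^2$, hence $\overline{\bar 1^{-1}x}\bar 1^{-1}=\bar x$ --- yields $F_h=\bar x$ for $i\in\Theta_+$ and $F_h=-x$ for $i\in\Theta_-$; invoking $B_0(v_0,v_0)=x-\bar x$ this shows $Q^0(T_i(v_0,x)e_{-i})=\minus(v_0,x)$ when $i\in\Theta_+$ and $Q^0(T_i(v_0,x)e_{-i})=\minus(v_0,x)\+(0,x+\bar x)$ when $i\in\Theta_-$, and both lie in $\M_0$ since $\M_0$ is a subgroup and $(0,x+\bar x)\in(\M_0)_{\min}$.

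The one genuine obstacle is this last computation, $Q^0(T_i(v_0,x)e_{-i})\in\M_0$: one has to evaluate $F_h$ on a hyperbolic vector correctly, which requires some care with the coefficients $\epsilon_i$ and the powers of $\bar 1$ produced by the pseudoinvolution (and a split into $i\in\Theta_+$, $i\in\Theta_-$), and then to recognize the result as $\minus(v_0,x)$ modulo $(\M_0)_{\min}$, where the trace-zero property $\M_0\subseteq\K_{\max}=\ker\tr$ does the work. Everything else is just substitution into the explicit action formulas for $T_{ij}(x)$ and $T_i(v_0,x)$ and the observation that the relevant values of $F_h$ vanish.
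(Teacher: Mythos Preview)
Your proof is correct and follows exactly the approach the paper takes: the paper's proof is the single sentence ``One checks easily that any $\I_0$-elementary transvection satisfies Conditions (i)--(iii) in Proposition \ref{propoddrel},'' and you have simply carried out that check in full. Your handling of the only nontrivial case, $Q^0(T_i(v_0,x)e_{-i})$, including the sign/$\bar 1$ bookkeeping and the use of $\tr(v_0,x)=0$, is accurate.
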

\begin{proof}
One checks easily that any $\I_0$-elementary transvection satisfies Conditions (i)-(iii) in Proposition \ref{propoddrel}.
\end{proof}

\begin{corollary}\label{corobv}
Let $\mathfrak{N}_0\subseteq \K_0$ be another relative odd form parameter of level $I$. Then 
\[\U_{2n}(\V_0,(I,\M_0))=\U_{2n}(\V_0,(I,\mathfrak{N}_0))~\Leftrightarrow~\M_0=\mathfrak{N}_0.\]
\end{corollary}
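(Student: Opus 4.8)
The implication ``$\Leftarrow$'' is immediate: if $\M_0=\mathfrak{N}_0$, then the odd form ideals $(I,\M_0)$ and $(I,\mathfrak{N}_0)$ coincide, and hence so do the associated principal congruence subgroups. For ``$\Rightarrow$'' the hypothesis is symmetric in $\M_0$ and $\mathfrak{N}_0$, so the plan is to prove just $\M_0\subseteq\mathfrak{N}_0$, and by symmetry we are then done.

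The idea is to detect each element of $\M_0$ by a single elementary extra short root transvection. Fix $(v_0,x)\in\M_0$. First I would note that $T_{-1}(v_0,x)$ is an $(I,\M_0)$-elementary transvection, hence lies in $\EU_{2n}((I,\M_0))$, which by Corollary \ref{cor1} is contained in $\U_{2n}(\V_0,(I,\M_0))$. By hypothesis this group equals $\U_{2n}(\V_0,(I,\mathfrak{N}_0))$, so $T_{-1}(v_0,x)\in\U_{2n}(\V_0,(I,\mathfrak{N}_0))$.

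Next I would feed $\sigma:=T_{-1}(v_0,x)$ into Proposition \ref{propoddrel} applied to the odd form ideal $(I,\mathfrak{N}_0)$: condition (ii) yields $Q^0(\sigma e_i)\in\mathfrak{N}_0$ for every $i\in\Theta$, and I would use it for $i=1$. By the description of the action of an elementary extra short root transvection recorded after Definition \ref{defel} we have $\sigma e_1=e_1+e_{-1}\epsilon_{-1}x-v_0=e_1-e_{-1}x-v_0$, so $(\sigma e_1)_0=-v_0$ and $(\sigma e_1)_h=e_1-e_{-1}x$; a direct computation gives $F_h(e_1-e_{-1}x,e_1-e_{-1}x)=-x$, whence
\[Q^0(\sigma e_1)=\bigl((\sigma e_1)_0,-F_h((\sigma e_1)_h,(\sigma e_1)_h)\bigr)=(-v_0,x)=(v_0,x)\hp(-1).\]
Thus $(v_0,x)\hp(-1)\in\mathfrak{N}_0$, and since $\mathfrak{N}_0$ is stable under $\hp$ we conclude $(v_0,x)=\bigl((v_0,x)\hp(-1)\bigr)\hp(-1)\in\mathfrak{N}_0$, which finishes the argument.

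The proof is short, so there is no real ``hard part''; the one point worth care is the choice of a \emph{negative} index for the transvection (evaluated at $e_1$), which makes the $F_h$-term collapse to exactly $-x$, so that $Q^0(\sigma e_1)$ returns $(v_0,x)$ up to the $\hp$-action of the unit $-1$ rather than only modulo the smallest relative odd form parameter of level $I$. With a positive index one instead lands on $(-v_0,-\bar x)$ and would additionally need the observation that $(0,x+\bar x)$ lies in that smallest parameter (valid since $x\in I$), and hence in $\mathfrak{N}_0$, to conclude.
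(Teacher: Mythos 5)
Your argument is correct and is essentially the paper's own proof: both detect $(v_0,x)\in\M_0$ via the transvection $T_{-1}(v_0,x)\in\EU_{2n}((I,\M_0))\subseteq\U_{2n}(\V_0,(I,\M_0))=\U_{2n}(\V_0,(I,\mathfrak{N}_0))$, apply Proposition \ref{propoddrel}(ii) to get $Q^0(T_{-1}(v_0,x)e_1)=(-v_0,x)\in\mathfrak{N}_0$, and recover $(v_0,x)$ using stability of $\mathfrak{N}_0$ under $\hp(-1)$, concluding by symmetry. The explicit computation of $Q^0(\sigma e_1)$ and the remark about the positive-index variant are fine but add nothing beyond the paper's argument.
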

\begin{proof}
Suppose that $\U_{2n}(\V_0,(I,\M_0))=\U_{2n}(\V_0,(I,\mathfrak{N}_0))$. Let $(v_0,x)\in \M_0$. Then $T_{-1}(v_0,x)\in \U_{2n}(\V_0,$ $(I,\M_0))=\U_{2n}(\V_0,(I,\mathfrak{N}_0))$ by the previous corollary. It follows from Proposition \ref{propoddrel} that $(-v_0,x)=Q^0(T_{-1}(v_0,x)e_1)\in \mathfrak{N}_0$. Hence $(v_0,x)=(-v_0,x)\hp(-1)\in\mathfrak{N}_0$ and therefore we have shown $\M_0\subseteq \mathfrak{N}_0$. Analogously one can show that $\mathfrak{N}_0\subseteq \M_0$.
\end{proof}

By Definition \ref{defaction} we have a left action of $\U_{2n}(\V_0)$ on the set of all form ideals on $\V$. This action induces a left action of $\U_{2n}(\V_0)$ on the set of all form ideals on $\V_0$, given by $(\sigma,\I_0)\mapsto {}^{\sigma}\I_0:=(I,{}^{\sigma}\M_0)$ where \[{}^{\sigma}\M_0=\{Q^0(\sigma v_0)\+(0,x)\mid (v_0,x)\in \M_0\}.\]
\begin{proposition}\label{proptildhyp}
$\NU_{2n}(\V_0,\I_0)=\{\sigma\in \U_{2n}(\V_0)\mid {}^{\sigma}\I_0=\I_0\}$.
\end{proposition}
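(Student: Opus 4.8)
The claim identifies the normalizer $\NU_{2n}(\V_0,\I_0)$ of the principal congruence subgroup with the stabilizer of the odd form ideal $\I_0$ under the action of $\U_{2n}(\V_0)$. By Proposition \ref{proptild} (applied to the odd form ideal $\I$ on $\V$ corresponding to $\I_0$, and using that $^\sigma\U(\V,\I)=\U(\V,{}^\sigma\I)$ together with the compatibility of the two actions), we get immediately the inclusion $\{\sigma\mid{}^\sigma\I_0=\I_0\}\subseteq\NU_{2n}(\V_0,\I_0)$: if ${}^\sigma\I_0=\I_0$ then ${}^\sigma\I=\I$, hence $^\sigma\U_{2n}(\V_0,\I_0)=\U_{2n}(\V_0,{}^\sigma\I_0)=\U_{2n}(\V_0,\I_0)$. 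So the whole content is the reverse inclusion: if $\sigma$ normalizes $\U_{2n}(\V_0,\I_0)$, then ${}^\sigma\M_0=\M_0$.

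To prove that reverse inclusion, I would first reduce it, via Proposition \ref{proptild} and the correspondence of actions, to the statement $\U_{2n}(\V_0,{}^\sigma\I_0)=\U_{2n}(\V_0,\I_0)$. Now ${}^\sigma\I_0=(I,{}^\sigma\M_0)$ has the \emph{same} level $I$ as $\I_0$, because the action of $\U_{2n}(\V_0)$ fixes the level (Definition \ref{defaction}); one should check that ${}^\sigma\M_0$ is indeed a relative odd form parameter of level $I$ contained in $\K_0$ — this follows since $\sigma$ preserves $B$, $Q_\K$, and the even part $V_0^{ev}$, so it carries $\M_{\min}\subseteq\M_0\subseteq\M_{\max}$ to subgroups with the analogous sandwich property, and $Q^0(\sigma v_0)\+(0,x)$ ranges over a $\hp$-stable subgroup of $\h_0$. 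Then Corollary \ref{corobv} applies verbatim: two relative odd form parameters $\M_0,\mathfrak N_0\subseteq\K_0$ of the same level $I$ with $\U_{2n}(\V_0,(I,\M_0))=\U_{2n}(\V_0,(I,\mathfrak N_0))$ must coincide. Taking $\mathfrak N_0={}^\sigma\M_0$ gives ${}^\sigma\M_0=\M_0$, i.e. ${}^\sigma\I_0=\I_0$, as desired.

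The only genuinely non-trivial point is the bookkeeping that ${}^\sigma\M_0$ really is a legitimate relative odd form parameter of level $I$ — in particular that it sits inside $\M_{\max}$ for the level $I$, which needs that $\sigma$ preserves $V_0^{ev}$ and $B_0$-values modulo $I$. Here one must be a little careful because $\sigma$ does not act on $V_0$ directly; the cleanest route is to pass to the ambient space $\V$, where $\sigma\in\U(\V)$ genuinely preserves $V^{ev}$, invoke Proposition \ref{proptild} there, and only at the end translate back to $\V_0$ using the 1-1 correspondence between odd form ideals on $\V$ and on $\V_0$ and the induced action $^\sigma\M_0=\{Q^0(\sigma v_0)\+(0,x)\mid(v_0,x)\in\M_0\}$. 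With that translation in hand, everything else is a formal consequence of Proposition \ref{proptild} and Corollary \ref{corobv}, and no new computation beyond what those two results already supply is needed.
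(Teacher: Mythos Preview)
Your proposal is correct and follows exactly the paper's approach: the paper's proof reads in its entirety ``Follows from Proposition~\ref{proptild} and Corollary~\ref{corobv}.'' The bookkeeping point you flag (that ${}^\sigma\M_0$ is a legitimate relative odd form parameter of level $I$) is handled in the paper by the paragraph immediately preceding the proposition, where the induced action of $\U_{2n}(\V_0)$ on odd form ideals of $\V_0$ is defined via the 1--1 correspondence with odd form ideals on $\V$---precisely the route you suggest.
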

\begin{proof}
Follows from Proposition \ref{proptild} and Corollary \ref{corobv}.
\end{proof}

\begin{remark}
On page 4760 in \cite{petrov}, Petrov defined the set $\tilde \U(V,q):=\{g\in \U(V,q)\mid (g w,a)\in \I~\forall (w,a)\in\I \}$ and claimed that it is a group. But is is not clear that $\tilde \U(V,q)$ is closed under taking inverses. In an e-mail to the author, Petrov wrote that he ``probably missed the condition that every $g$ from $\tilde \U(V,q)$ must preserve $U$ (that is $gU=U$)" where $U$ is the submodule of $V$ generated by $VI$ and all elements $v\in V$ such that $(v, x)\in \M$ for some $x\in I$. After making this correction, $\tilde \U(V,q)$ conincides with the normalized principal subgroup $\NU_{2n}(\V_0,\I_0)$ (follows from Proposition \ref{proptildhyp} above).
\end{remark}

\begin{corollary}\label{cor2}
$\EU_{2n}(\V_0)\subseteq \NU_{2n}(\V_0,\I_0)$.
\end{corollary}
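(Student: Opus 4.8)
The plan is to invoke Proposition~\ref{proptildhyp}, which identifies $\NU_{2n}(\V_0,\I_0)$ with the stabilizer $\{\sigma\in\U_{2n}(\V_0)\mid{}^{\sigma}\I_0=\I_0\}$ of $\I_0$ under the action of $\U_{2n}(\V_0)$ on odd form ideals on $\V_0$. Since this stabilizer is manifestly a subgroup of $\U_{2n}(\V_0)$, and since $\EU_{2n}(\V_0)$ is by definition generated by the elementary short and extra short root transvections, it suffices to check that $^{\tau}\M_0=\M_0$ for every elementary transvection $\tau$; by the description of the action recalled just before Proposition~\ref{proptildhyp}, this means showing $^{\tau}\M_0=\{Q^0(\tau v_0)\+(0,x)\mid(v_0,x)\in\M_0\}=\M_0$.

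First I would reduce this to the single identity $Q^0(\tau v_0)=Q^0(v_0)$ for all $v_0\in V_0$. Indeed, for $v_0\in V_0$ one has $(v_0)_h=0$, hence $Q^0(v_0)=(v_0,0)$ and $Q^0(v_0)\+(0,x)=(v_0,x)$; so once $Q^0(\tau v_0)=Q^0(v_0)$ is known for all $v_0\in V_0$, we get $^{\tau}\M_0=\{Q^0(v_0)\+(0,x)\mid(v_0,x)\in\M_0\}=\{(v_0,x)\mid(v_0,x)\in\M_0\}=\M_0$, as required.

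To verify $Q^0(\tau v_0)=Q^0(v_0)$ I would use the explicit descriptions of the action of elementary transvections on $V_0$ recorded right after Definition~\ref{defel}. A short root transvection $T_{ij}(y)$ ($i\neq\pm j$) fixes $V_0$ pointwise, so there is nothing to check. For an extra short root transvection $T_i(w_0,z)$ with $(w_0,z)\in\K_0$ one has $\tau v_0=v_0-e_i\epsilon_iB_0(w_0,v_0)$, so $(\tau v_0)_0=v_0$ while $(\tau v_0)_h=-e_i\epsilon_iB_0(w_0,v_0)$ is supported in the single coordinate $i$. Since $F_h$ pairs the $k$-th coordinate with the $(-k)$-th coordinate, it vanishes on a vector supported in a single coordinate, so $Q^0(\tau v_0)=(v_0,-F_h((\tau v_0)_h,(\tau v_0)_h))=(v_0,0)=Q^0(v_0)$. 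Hence every elementary transvection lies in $\NU_{2n}(\V_0,\I_0)$, and therefore so does the subgroup $\EU_{2n}(\V_0)$ they generate.

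There is no genuine obstacle here: the whole argument is a routine application of Proposition~\ref{proptildhyp} together with the formulas already established for the elementary transvections, the only point requiring a moment's attention being the vanishing of $F_h$ on the correction term in the extra short root case.
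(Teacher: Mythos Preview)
Your proof is correct and follows essentially the same approach as the paper: both argue via Proposition~\ref{proptildhyp} by showing that $Q^0(\tau v_0)=Q^0(v_0)$ for every elementary transvection $\tau$ and every $v_0\in V_0$, citing the explicit formulas after Definition~\ref{defel}. The paper states this identity without elaboration, whereas you spell out the vanishing of $F_h$ on the single-coordinate correction term in the extra short root case.
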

\begin{proof}
Let $\sigma$ be an elementary transvection. Then $Q^0(\sigma v_0)=Q^0(v_0)$ for any $v_0\in V_0$ (see the two paragraphs after Definition \ref{defel}). Hence ${}^{\sigma}\I_0=\I_0$.
\end{proof}

\begin{corollary}
$\EU_{2n}(\V_0,\I_0)\subseteq \U_{2n}(\V_0,\I_0)$.
\end{corollary}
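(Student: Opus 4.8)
The plan is to exploit the two preceding corollaries together with Propositions \ref{proptild} and \ref{proptildhyp}, reducing the statement to a purely formal argument about normal closures. Recall that $\EU_{2n}(\V_0,\I_0)$ is by definition the normal closure of the preelementary subgroup $\EU_{2n}(\I_0)$ inside $\EU_{2n}(\V_0)$, so it is generated by the elements $^{g}\tau$ with $g\in\EU_{2n}(\V_0)$ and $\tau\in\EU_{2n}(\I_0)$. Since $\U_{2n}(\V_0,\I_0)$ is a subgroup of $\U_{2n}(\V_0)$, it suffices to show that each such generator $^{g}\tau$ lies in $\U_{2n}(\V_0,\I_0)$.

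First, by Corollary \ref{cor1} we have $\tau\in\EU_{2n}(\I_0)\subseteq\U_{2n}(\V_0,\I_0)$. Next, by Corollary \ref{cor2} we have $g\in\EU_{2n}(\V_0)\subseteq\NU_{2n}(\V_0,\I_0)$, and hence by Proposition \ref{proptildhyp} the odd form ideal $\I_0$ is fixed by $g$, i.e. ${}^{g}\I_0=\I_0$. Now $\U_{2n}(\V_0,\I_0)$ is by definition the principal congruence subgroup $\U(\V,\I)$ for the odd form ideal $\I=(I,\M)$ on $\V$ corresponding to $\I_0$, and under this correspondence ${}^{g}\M=\K_h\hp I\+{}^{g}\M_0$, so ${}^{g}\I_0=\I_0$ forces ${}^{g}\I=\I$. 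Applying Proposition \ref{proptild} we obtain $^{g}\U_{2n}(\V_0,\I_0)={}^{g}\U(\V,\I)=\U(\V,{}^{g}\I)=\U(\V,\I)=\U_{2n}(\V_0,\I_0)$.

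Combining the two observations, $^{g}\tau\in{}^{g}\U_{2n}(\V_0,\I_0)=\U_{2n}(\V_0,\I_0)$. Since all generators of $\EU_{2n}(\V_0,\I_0)$ lie in the group $\U_{2n}(\V_0,\I_0)$, the desired inclusion $\EU_{2n}(\V_0,\I_0)\subseteq\U_{2n}(\V_0,\I_0)$ follows. No genuine obstacle is anticipated; the only point requiring a little care is the bookkeeping that the action of $\U_{2n}(\V_0)$ on odd form ideals of $\V_0$ is compatible, under the $1$-$1$ correspondence, with the action on odd form ideals of $\V$, so that fixing $\I_0$ is the same as fixing $\I$. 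This is immediate from the explicit descriptions of ${}^{\sigma}\M_0$ and of $\M=\K_h\hp I\+\M_0$ recorded before Proposition \ref{proptildhyp}.
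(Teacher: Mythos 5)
Your proof is correct and is essentially the paper's intended argument: the paper's one-line proof ``follows from Corollaries \ref{cor1} and \ref{cor2}'' is exactly your normal-closure generator argument, with Corollary \ref{cor1} handling the preelementary generators and Corollary \ref{cor2} handling the conjugating elements. The only (harmless) detour is your use of Propositions \ref{proptild} and \ref{proptildhyp} plus the form-ideal correspondence to get $^{g}\U_{2n}(\V_0,\I_0)=\U_{2n}(\V_0,\I_0)$; this is immediate from the fact that $\NU_{2n}(\V_0,\I_0)$ is by definition the normalizer of the principal congruence subgroup, so no bookkeeping about $^{g}\M$ is actually needed.
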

\begin{proof}
Follows from Corollaries \ref{cor1} and \ref{cor2}.
\end{proof}

\begin{definition}
The subgroup 
\[\CU_{2n}(\V_0,\I_0):=\{\sigma\in \NU_{2n}(\V_0,\I_0)\mid [\sigma,\EU_{2n}(\V_0)]\subseteq \U_{2n}(\V_0,\I_0)\}\]
of $\U_{2n}(V_0)$ is called the {\it full congruence subgroup of level $\I_0$}.
\end{definition}

\subsection{The standard commutator formula}
In this subsection we assume that $R$ is commutative and $n\geq 3$. Further $\I_0$ denotes an odd form ideal on $\V_0$.
\begin{theorem}[V. A. Petrov, 2005]\label{thmscf}
$\EU_{2n}(\V_0,\I_0)$ is a normal subgroup of $\NU_{2n}(\V_0,\I_0)$ and the standard commutator formula
\begin{align*}
[\CU_{2n}(\V_0,\I_0),\EU_{2n}(\V_0)]
=[\EU_{2n}(\V_0,\I_0),\EU_{2n}(\V_0)]
=\EU_{2n}(\V_0,\I_0)
\end{align*}
holds. In particular from the absolute case $\I_0=(R,\K_0)$ it follows that $\EU_{2n}(\V_0)$ is perfect and normal in $\U_{2n}(\V_0)$
\end{theorem}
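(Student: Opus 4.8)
The plan is to reduce everything to the relation calculus of Lemma \ref{39} together with the $1$-$1$ correspondence between odd form ideals on $\V$ and on $\V_0$ and the characterizations in Propositions \ref{propodd}, \ref{propoddrel} and \ref{proptildhyp}. The backbone is the standard Suslin--Tits style argument for commutator formulas in groups of rank $\geq 3$, adapted to the odd unitary setting. I would treat the three asserted equalities in the chain by proving the following inclusions:
\[
\EU_{2n}(\V_0,\I_0)\subseteq[\EU_{2n}(\V_0,\I_0),\EU_{2n}(\V_0)]\subseteq[\CU_{2n}(\V_0,\I_0),\EU_{2n}(\V_0)]\subseteq\EU_{2n}(\V_0,\I_0),
\]
the middle one being trivial from $\EU_{2n}(\V_0,\I_0)\subseteq\CU_{2n}(\V_0,\I_0)$ (which follows from the corollaries above), and then read off normality and perfectness.

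First I would establish the leftmost inclusion, i.e. that $\EU_{2n}(\V_0,\I_0)$ is generated by commutators $[\xi,\eta]$ with $\xi$ an $\I_0$-elementary transvection and $\eta$ elementary. By definition $\EU_{2n}(\V_0,\I_0)$ is the normal closure in $\EU_{2n}(\V_0)$ of the preelementary subgroup $\EU_{2n}(\I_0)$, so it suffices to show every generator $T_{ij}(x)$ with $x\in I$ and every $T_i(\xi)$ with $\xi\in\M_0$ lies in $[\EU_{2n}(\V_0,\I_0),\EU_{2n}(\V_0)]$; here $n\geq3$ is used to pick indices $k\neq\pm i,\pm j$ and write $T_{ij}(x)=[T_{ik}(x),T_{kj}(1)]$ via (R5), and similarly to decompose the extra short root generators using (R8)/(R9) and the identity $T_i(0,t+\bar t)=[T_i(v_0,y),T_i(w_0,z)]$ from (R7); one also notes $[\EU_{2n}(\V_0,\I_0),\EU_{2n}(\V_0)]$ is normal in $\EU_{2n}(\V_0)$, so its normal-closure-worth of generators is covered.

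The rightmost inclusion $[\CU_{2n}(\V_0,\I_0),\EU_{2n}(\V_0)]\subseteq\EU_{2n}(\V_0,\I_0)$ is the heart of the matter and I expect it to be the main obstacle. Fix $\sigma\in\CU_{2n}(\V_0,\I_0)$ and an elementary transvection $\eta$; we must show $[\sigma,\eta]\in\EU_{2n}(\V_0,\I_0)$. Since $[\sigma,\EU_{2n}(\V_0)]\subseteq\U_{2n}(\V_0,\I_0)$ and (by Proposition \ref{proptildhyp}) $\sigma$ stabilizes $\I_0$, the element $\tau:=[\sigma,\eta]$ lies in $\U_{2n}(\V_0,\I_0)$, so by Proposition \ref{propoddrel} we have $\tau^{hh}\equiv e\bmod I$, $Q^0(\tau e_i)\in\M_0$ and $Q^0(\tau v_0)\minus Q^0(v_0)\in\M_0$. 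The strategy is the usual ``extraction of transvections'': using that $\eta$ fixes all but two (or three) of the basis vectors $e_k$, one shows $\tau$ itself is, modulo $\EU_{2n}(\V_0,\I_0)$, supported near a single pair of indices, and then extracts explicit $\I_0$-elementary factors $T_{ij}(\cdot)$ and $T_i(\cdot)$ from the matrix entries $\tau_{ij}$ and the Heisenberg data $Q^0(\tau e_i)$, $\tau^{h0}$, using Conditions (i)--(vi) of Proposition \ref{propodd} to keep the remaining columns under control. The commutativity of $R$ enters here to make the entry-extraction and the relation identities go through cleanly (and is exactly where $n\geq3$ is needed a second time, to have a spare index for the $[T_{ik},T_{kj}]$-type decompositions at each stage). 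After finitely many such extractions $\tau$ is reduced to the identity modulo $\EU_{2n}(\V_0,\I_0)$, giving the inclusion.

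Finally, normality of $\EU_{2n}(\V_0,\I_0)$ in $\NU_{2n}(\V_0,\I_0)$: for $\sigma\in\NU_{2n}(\V_0,\I_0)$ and a generator $\xi$ of $\EU_{2n}(\V_0,\I_0)$ write $^\sigma\xi=\xi\,[\sigma,\xi]^{-1}\cdot$(conjugate adjustments); since every generator of $\EU_{2n}(\V_0,\I_0)$ is itself a commutator of the form $[\EU_{2n}(\V_0,\I_0),\EU_{2n}(\V_0)]$ by the first inclusion, one reduces to commutators $^\sigma[\alpha,\eta]=[{}^\sigma\alpha,{}^\sigma\eta]$ with ${}^\sigma\eta\in\EU_{2n}(\V_0)$ (as $\EU_{2n}(\V_0)\trianglelefteq\NU_{2n}(\V_0,\I_0)$, which follows from Corollary \ref{cor2} applied with the $\sigma$-twisted ideal) and ${}^\sigma\alpha\in\U_{2n}(\V_0,{}^\sigma\I_0)=\U_{2n}(\V_0,\I_0)$ by Propositions \ref{proptild}, \ref{proptildhyp}; then the commutator formula just proved puts $^\sigma[\alpha,\eta]$ back into $\EU_{2n}(\V_0,\I_0)$. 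Specializing to $\I_0=(R,\K_0)$ gives $\EU_{2n}(\V_0,(R,\K_0))=\EU_{2n}(\V_0)$ (both sides equal the normal closure of all elementary transvections, which are already $(R,\K_0)$-elementary), whence $\EU_{2n}(\V_0)=[\EU_{2n}(\V_0),\EU_{2n}(\V_0)]$ is perfect, and $\CU_{2n}(\V_0,(R,\K_0))=\NU_{2n}(\V_0,(R,\K_0))=\U_{2n}(\V_0)$ makes $\EU_{2n}(\V_0)$ normal in $\U_{2n}(\V_0)$. \qed
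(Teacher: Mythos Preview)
The paper does not prove Theorem~\ref{thmscf} itself; it cites Petrov's original paper \cite{petrov} (Proposition~4 and the Corollary on p.~4765) together with Stepanov's Lemma~1. So there is no in-paper argument to compare against, and the comparison is between your direct attempt and the cited literature.

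Your sketch has a genuine gap at the only nontrivial step, namely the inclusion
\[
[\CU_{2n}(\V_0,\I_0),\EU_{2n}(\V_0)]\subseteq\EU_{2n}(\V_0,\I_0).
\]
You observe correctly that $\tau:=[\sigma,\eta]\in\U_{2n}(\V_0,\I_0)$, and then propose to ``extract'' $\I_0$-elementary factors from $\tau$ until nothing is left. But an arbitrary element of $\U_{2n}(\V_0,\I_0)$ is \emph{not} a product of $\I_0$-elementary transvections---this is precisely the $K_1$-type obstruction---so the extraction cannot terminate on general principles. The reason the inclusion is true is \emph{not} that $\tau$ has congruence level $\I_0$, but that $\tau$ is a commutator with a single elementary $\eta$; Petrov's proof exploits this via Suslin-style localisation at maximal ideals of $R$ (this is where commutativity is used in an essential way), reducing to the case of a local base ring where one has surjective stability and the statement is elementary. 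Your write-up invokes the name ``Suslin--Tits'' but never localises and instead describes a direct column-by-column reduction; that route (Vaserstein's method) can be made to work for $\GL_n$, but for odd unitary groups the bookkeeping with the extra-short roots and with $Q^0$ is substantially harder, and ``after finitely many such extractions $\tau$ is reduced to the identity'' is not a proof.

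There is also a circularity in your normality argument. You write ${}^{\sigma}\eta\in\EU_{2n}(\V_0)$ ``as $\EU_{2n}(\V_0)\trianglelefteq\NU_{2n}(\V_0,\I_0)$'', justified by Corollary~\ref{cor2}. But Corollary~\ref{cor2} only says $\EU_{2n}(\V_0)\subseteq\NU_{2n}(\V_0,\I_0)$; normality of $\EU_{2n}(\V_0)$ in the full unitary group is exactly the absolute case of the theorem you are proving. In Petrov's treatment this normality is established first (again by localisation), and only then is the relative commutator formula deduced; your order of deductions needs the same input and does not supply it.
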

\begin{proof}
Follows from \cite[Proposition 4 and Corollary on page 4765]{petrov} and \cite[Lemma 1]{stepanov} (note that in \cite{petrov} the full congruence subgroup is defined a little differently).
\end{proof}

\begin{corollary}\label{165}
If $\sigma\in \U_{2n}(\V_0)$, then 
\[^{\sigma}\EU_{2n}(\V_0,\I_0)=\EU_{2n}(\V_0,{}^{\sigma}\I_0)\]
and
\[^{\sigma}\CU_{2n}(\V_0,\I_0)=\CU_{2n}(\V_0,{}^{\sigma}\I_0).\]
Further $\NU_{2n}(\V_0,\I_0)$ is the normalizer of $\EU_{2n}(\V_0,\I_0)$ and of $\CU_{2n}(\V_0,\I_0)$.
\end{corollary}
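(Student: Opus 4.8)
The plan is to derive Corollary \ref{165} entirely from Theorem \ref{thmscf} together with Proposition \ref{proptild}, Proposition \ref{proptildhyp} and Corollary \ref{cor2}, using the observation that conjugation by an arbitrary $\sigma\in\U_{2n}(\V_0)$ intertwines the action on subgroups with the action on odd form ideals (Definition \ref{defaction}). I would first establish the identity $^{\sigma}\EU_{2n}(\V_0,\I_0)=\EU_{2n}(\V_0,{}^{\sigma}\I_0)$. By definition, $\EU_{2n}(\V_0,\I_0)$ is the normal closure of the preelementary subgroup $\EU_{2n}(\I_0)$ in $\EU_{2n}(\V_0)$; since $\EU_{2n}(\V_0)$ is normal in $\U_{2n}(\V_0)$ by Theorem \ref{thmscf}, conjugation by $\sigma$ carries normal closures in $\EU_{2n}(\V_0)$ to normal closures in $\EU_{2n}(\V_0)$, so it suffices to check that $^{\sigma}$ sends the generating $\I_0$-elementary transvections to $^{\sigma}\I_0$-elementary transvections up to elements of $\EU_{2n}(\V_0)$. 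This is immediate from Lemma \ref{lemesd}: $^{\sigma}T_{uv}(x)=T_{\sigma u,\sigma v}(x)$, and for an elementary short root transvection $T_{ij}(x)$ with $x\in I$ the image is again an ESD transvection with the same ``$x$'' (hence lies in the relevant relative elementary subgroup since $I$ is fixed by the action), while for an elementary extra short root transvection $T_i(v_0,x)$ with $(v_0,x)\in\M_0$ the image involves $Q^0(\sigma e_i)$ and $Q^0(\sigma v_0)$, and by the definition of $^{\sigma}\M_0$ these data lie in $^{\sigma}\M_0$. A clean way to package this is: $^{\sigma}\EU_{2n}(\I_0)$ and $\EU_{2n}({}^{\sigma}\I_0)$ have the same normal closure in $\EU_{2n}(\V_0)$, which gives the first identity.

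Next I would obtain the congruence-subgroup version. Combining the first identity with Proposition \ref{proptild} (which gives $^{\sigma}\U_{2n}(\V_0,\I_0)=\U_{2n}(\V_0,{}^{\sigma}\I_0)$) and Proposition \ref{proptildhyp} (which gives $^{\sigma}\NU_{2n}(\V_0,\I_0)=\NU_{2n}(\V_0,{}^{\sigma}\I_0)$, already recorded as the Corollary after Proposition \ref{proptild}), the definition of $\CU_{2n}(\V_0,\I_0)$ as $\{\tau\in\NU_{2n}(\V_0,\I_0)\mid[\tau,\EU_{2n}(\V_0)]\subseteq\U_{2n}(\V_0,\I_0)\}$ transports under conjugation by $\sigma$: one has $\tau\in\CU_{2n}(\V_0,\I_0)$ iff $^{\sigma}\tau\in\NU_{2n}(\V_0,{}^{\sigma}\I_0)$ and $[{}^{\sigma}\tau,\,{}^{\sigma}\EU_{2n}(\V_0)]\subseteq{}^{\sigma}\U_{2n}(\V_0,\I_0)=\U_{2n}(\V_0,{}^{\sigma}\I_0)$, and since $^{\sigma}\EU_{2n}(\V_0)=\EU_{2n}(\V_0)$ (again by normality from Theorem \ref{thmscf}), this says exactly $^{\sigma}\tau\in\CU_{2n}(\V_0,{}^{\sigma}\I_0)$. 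Hence $^{\sigma}\CU_{2n}(\V_0,\I_0)=\CU_{2n}(\V_0,{}^{\sigma}\I_0)$.

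For the final assertion, that $\NU_{2n}(\V_0,\I_0)$ is the normalizer of $\EU_{2n}(\V_0,\I_0)$ and of $\CU_{2n}(\V_0,\I_0)$: by Proposition \ref{proptildhyp}, $\NU_{2n}(\V_0,\I_0)=\{\sigma\mid{}^{\sigma}\I_0=\I_0\}$, and from the two transport identities just proved, $^{\sigma}\I_0=\I_0$ forces $\sigma$ to normalize both $\EU_{2n}(\V_0,\I_0)$ and $\CU_{2n}(\V_0,\I_0)$; conversely, if $\sigma$ normalizes $\EU_{2n}(\V_0,\I_0)$ then $\EU_{2n}(\V_0,{}^{\sigma}\I_0)=\EU_{2n}(\V_0,\I_0)$, and by Corollary \ref{corobv} (applied via Corollary \ref{cor1} and the already-established equality $\EU_{2n}(\V_0,\I_0)\subseteq\U_{2n}(\V_0,\I_0)$, or more directly by extracting the level back out of the elementary subgroup as in the proof of Corollary \ref{corobv}) this yields $^{\sigma}\I_0=\I_0$, i.e. $\sigma\in\NU_{2n}(\V_0,\I_0)$; the argument for $\CU_{2n}(\V_0,\I_0)$ is the same using that the elementary subgroup of level $\I_0$ sits inside $\CU_{2n}(\V_0,\I_0)$ and determines the level. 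The main obstacle I anticipate is the converse direction of this last step, namely verifying carefully that normalizing $\EU_{2n}(\V_0,\I_0)$ (rather than the potentially larger $\U_{2n}(\V_0,\I_0)$) is enough to recover the level $\I_0$ exactly; this requires re-running the ``read off $(v_0,x)\in\M_0$ from $Q^0(T_{-1}(v_0,x)e_1)$'' trick from the proof of Corollary \ref{corobv} but now inside $\EU_{2n}(\V_0,\I_0)$, using that $T_{-1}(v_0,x)$ for $(v_0,x)\in\M_0$ lies in the preelementary subgroup $\EU_{2n}(\I_0)\subseteq\EU_{2n}(\V_0,\I_0)$.
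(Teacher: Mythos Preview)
Your argument for the transport of $\CU_{2n}$ and for the normalizer statement is essentially correct, but your proof of the first identity $^{\sigma}\EU_{2n}(\V_0,\I_0)=\EU_{2n}(\V_0,{}^{\sigma}\I_0)$ has a genuine gap. You claim it suffices to check that conjugation by $\sigma$ sends $\I_0$-elementary transvections to $^{\sigma}\I_0$-elementary transvections ``up to elements of $\EU_{2n}(\V_0)$'', and then invoke Lemma~\ref{lemesd}. But $^{\sigma}T_{ij}(x)=T_{\sigma e_{-j},-\sigma e_i x\epsilon_j}(0)$ is an ESD transvection whose isotropic vector $\sigma e_{-j}$ is in general \emph{not} one of the basis vectors $e_k$; it is therefore not an elementary transvection at all, let alone a $^{\sigma}\I_0$-elementary one, and there is no evident way to rewrite it as a product of $\EU_{2n}(\V_0)$-conjugates of $^{\sigma}\I_0$-elementary transvections. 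The same objection applies to $^{\sigma}T_i(v_0,x)$. Your ``clean package'' (that $^{\sigma}\EU_{2n}(\I_0)$ and $\EU_{2n}({}^{\sigma}\I_0)$ have the same normal closure in $\EU_{2n}(\V_0)$) is precisely the statement to be proved and does not follow from the generator calculation you sketch.

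The paper's route avoids this by leaning harder on the standard commutator formula. From Theorem~\ref{thmscf} together with the inclusions $\EU_{2n}(\V_0,\I_0)\subseteq\U_{2n}(\V_0,\I_0)\subseteq\CU_{2n}(\V_0,\I_0)$ one gets
\[
\EU_{2n}(\V_0,\I_0)=[\U_{2n}(\V_0,\I_0),\EU_{2n}(\V_0)].
\]
Now conjugate: Proposition~\ref{proptild} gives $^{\sigma}\U_{2n}(\V_0,\I_0)=\U_{2n}(\V_0,{}^{\sigma}\I_0)$, and normality of $\EU_{2n}(\V_0)$ (also from Theorem~\ref{thmscf}) gives $^{\sigma}\EU_{2n}(\V_0)=\EU_{2n}(\V_0)$, so
\[
^{\sigma}\EU_{2n}(\V_0,\I_0)=[\U_{2n}(\V_0,{}^{\sigma}\I_0),\EU_{2n}(\V_0)]=\EU_{2n}(\V_0,{}^{\sigma}\I_0).
\]
The point is that the commutator formula expresses the relative elementary subgroup in terms of objects ($\U_{2n}(\V_0,\I_0)$ and $\EU_{2n}(\V_0)$) whose transformation under conjugation is already known from Proposition~\ref{proptild}; your generator approach tries to bypass this and cannot. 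Once the first identity is established this way, the rest of your argument (including the level-extraction step for the converse of the normalizer claim) goes through.
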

\begin{proof}
Follows from Proposition \ref{proptild} and the standard commutator formula in Theorem \ref{thmscf}. 
\end{proof}

\begin{definition}\label{deflevel}
A subgroup $H$ of $\U_{2n}(\V_0)$ is called {\it E-normal} iff it is normalized by the elementary subgroup $\EU_{2n}(\V_0)$. Suppose $H$ is an E-normal subgroup of $\U_{2n}(\V_0)$. Set
\[I:=\{x\in R\mid T_{ij}(x)\in H \text{ for some }i,j\in\Theta\}\]
and
\[\M_0:=\{(v_0,y)\in \K_0\mid T_{i}(v_0,y)\in H \text{ for some }i\in\Theta\}.\]
Then $\I_0:=(I,\M_0)$ is an odd form ideal on $\V_0$ such that $\EU_{2n}(\V_0,\I_0)\subseteq H$. It is called the {\it level of $H$} and $H$ is called an {\it E-normal subgroup of level $\I_0$}. 
\end{definition}

\begin{corollary}\label{cornu}
Let $H$ be an E-normal subgroup of level $\I_0$. Then for any $\sigma\in \U_{2n}(\V_0)$, $^{\sigma}H$ is an E-normal subgroup of level ${}^{\sigma}\I_0$. It follows that $H\subseteq \NU_{2n}(\V_0,\I_0)$.
\end{corollary}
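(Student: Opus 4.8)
The plan is to deduce everything formally from the standard commutator formula (Theorem \ref{thmscf}) and Corollary \ref{165}: first show $^{\sigma}H$ is E-normal, then identify its level by a symmetric double-inclusion argument run once with $\sigma$ and once with $\widetilde\sigma$, and finally obtain $H\subseteq\NU_{2n}(\V_0,\I_0)$ by conjugating $H$ by its own elements.

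First I would check that $^{\sigma}H$ is E-normal. For $\epsilon\in\EU_{2n}(\V_0)$ one has $^{\epsilon}(^{\sigma}H)={}^{\sigma}\bigl(^{\widetilde\sigma\epsilon\sigma}H\bigr)$, and since $\EU_{2n}(\V_0)$ is normal in $\U_{2n}(\V_0)$ by Theorem \ref{thmscf}, the element $\widetilde\sigma\epsilon\sigma$ again lies in $\EU_{2n}(\V_0)$; as $H$ is E-normal this gives $^{\widetilde\sigma\epsilon\sigma}H=H$, hence $^{\epsilon}(^{\sigma}H)={}^{\sigma}H$. So $^{\sigma}H$ is E-normal and has a well-defined level, say $\I_0'=(I',\M_0')$.

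The core observation is a monotonicity property of levels: if $\J_0=(J,\mathfrak{N}_0)$ is an odd form ideal on $\V_0$ and $H'$ is an E-normal subgroup with $\EU_{2n}(\V_0,\J_0)\subseteq H'$, then $J$ is contained in the ideal part and $\mathfrak{N}_0$ in the form-parameter part of the level of $H'$. Indeed, for $x\in J$ and $i\neq\pm j$ the transvection $T_{ij}(x)$ is $\J_0$-elementary, hence lies in the preelementary subgroup $\EU_{2n}(\J_0)\subseteq\EU_{2n}(\V_0,\J_0)\subseteq H'$, so $x$ belongs to the ideal part of the level of $H'$ by Definition \ref{deflevel}; similarly $T_{i}(v_0,y)\in H'$ whenever $(v_0,y)\in\mathfrak{N}_0$. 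Now apply this with $H'={}^{\sigma}H$ and $\J_0={}^{\sigma}\I_0$: since $\EU_{2n}(\V_0,\I_0)\subseteq H$, Corollary \ref{165} yields $\EU_{2n}(\V_0,{}^{\sigma}\I_0)={}^{\sigma}\EU_{2n}(\V_0,\I_0)\subseteq{}^{\sigma}H$, so the observation gives $I\subseteq I'$ and $^{\sigma}\M_0\subseteq\M_0'$, i.e. $^{\sigma}\I_0\subseteq\I_0'$. Running the same argument with $\widetilde\sigma$ in place of $\sigma$ and $^{\sigma}H$ in place of $H$ — using that $\I\mapsto{}^{\sigma}\I$ is a group action, so $^{\widetilde\sigma}(^{\sigma}H)=H$ and $^{\widetilde\sigma}(^{\sigma}\I)=\I$ — gives $^{\widetilde\sigma}\I_0'\subseteq\I_0$, that is $\I_0'\subseteq{}^{\sigma}\I_0$. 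Hence $\I_0'={}^{\sigma}\I_0$, which is the first assertion.

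For the final claim I would take $h\in H$; then $^{h}H=H$, so by what has just been shown the level of $H$ equals the level of $^{h}H$, which equals $^{h}\I_0$. Thus $^{h}\I_0=\I_0$, so $h\in\{\tau\in\U_{2n}(\V_0)\mid{}^{\tau}\I_0=\I_0\}=\NU_{2n}(\V_0,\I_0)$ by Proposition \ref{proptildhyp}, and therefore $H\subseteq\NU_{2n}(\V_0,\I_0)$. The argument is essentially formal once Theorem \ref{thmscf} and Corollary \ref{165} are in hand; the only points needing a little care are the monotonicity observation, where one must pass through the preelementary subgroup (rather than its normal closure) to see that the individual $\J_0$-elementary transvections lie in $H'$, and the bookkeeping with the group action $\I\mapsto{}^{\sigma}\I$ that makes the reverse inclusion go through.
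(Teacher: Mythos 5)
Your argument is correct and follows essentially the same route as the paper, whose one-line proof invokes exactly the ingredients you use: Corollary \ref{165} (equivalently the standard commutator formula of Theorem \ref{thmscf}) to transport E-normality and the relative elementary subgroups under conjugation, and Proposition \ref{proptildhyp} to conclude $H\subseteq\NU_{2n}(\V_0,\I_0)$ from $^{h}\I_0=\I_0$ for $h\in H$. Your monotonicity observation and double-inclusion with $\sigma$ and $\widetilde\sigma$ are just a careful spelling-out of the details the paper leaves implicit, and they are sound.
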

\begin{proof}
Follows from the previous corollary and Proposition \ref{proptildhyp}.
\end{proof}

\subsection{The case $\M_0=(\M_0)_{\max}$}
In this subsection we assume that $R$ is commutative and $n\geq 3$. Further $(\I_0)_{\max}=(I,(\M_0)_{\max})$ denotes an odd form ideal on $\V_0$ with maximal relative odd form parameter. Proposition \ref{proptildhyp} implies that $\NU_{2n}(\V_0,(\I_0)_{\max})=\U_{2n}(\V_0)$, i.e. $\U_{2n}(\V_0,(\I_0)_{\max})$ is normal in $\U_{2n}(\V_0)$.
\begin{lemma}\label{lemumax}
Let $\sigma \in \U_{2n}(\V_0)$. Then $\sigma\in \U_{2n}(\V_0,(\I_0)_{\max})$ iff Conditions (i)-(iv) below are satisfied.
\begin{enumerate}[(i)]
\item $\sigma^{hh}\equiv e\bmod I$.
\item $(\sigma^{0h}v_0)_i \in I$ for any $v_0\in V^{ev}_0$ and $i\in\Theta$.
\item $B_0(\sigma^{h0}e_i,v_0)\in I$ for any $v_0\in V^{ev}_0$ and $i\in\Theta$.
\item $B_0((\sigma^{00}-id_{V_0})v_0,w_0)\in I$ for any $v_0,w_0\in V^{ev}_0$.
\end{enumerate}
\end{lemma}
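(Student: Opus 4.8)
The statement to prove is the characterization of $\U_{2n}(\V_0,(\I_0)_{\max})$ by Conditions (i)--(iv). The natural approach is to start from Proposition \ref{propoddrel}, which already characterizes membership in $\U_{2n}(\V_0,\I_0)$ by three conditions for a general $\I_0$, and then to unwind what Conditions (ii) and (iii) of that proposition mean when $\M_0 = (\M_0)_{\max}$. Recall that $(\M_0)_{\max} = \{(v_0,x)\in\K_0 \mid x\in I,\ B_0(v_0,w_0)\in I\ \forall w_0\in V^{ev}_0\}$. So the strategy is: Condition (i) of Proposition \ref{propoddrel} is literally Condition (i) here; and Conditions (ii), (iii) of Proposition \ref{propoddrel}, namely $Q^0(\sigma e_i)\in(\M_0)_{\max}$ and $Q^0(\sigma v_0)\minus Q^0(v_0)\in(\M_0)_{\max}$, need to be translated into the pointwise conditions (ii)--(iv) above by spelling out the first and second components of $Q^0$ and what it means to land in $(\M_0)_{\max}$.

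\textbf{Key steps.}
First I would compute the components of $Q^0(\sigma e_i)$ and $Q^0(\sigma v_0)\minus Q^0(v_0)$. By definition $Q^0(v) = (v_0, -F_h(v_h,v_h))$, so the $V_0$-part of $Q^0(\sigma e_i)$ is $(\sigma e_i)_0 = \sigma^{h0}e_i$ (using the notation $\sigma^{h0}$ for the map $V_h\to V_0$), and the $V_0$-part of $Q^0(\sigma v_0)\minus Q^0(v_0)$ is $(\sigma v_0)_0 - v_0 = (\sigma^{00}-id_{V_0})v_0$. Next, membership of an element $(u,x)\in\K_0$ in $(\M_0)_{\max}$ is equivalent to the two conditions $x\in I$ and $B_0(u,w_0)\in I$ for all $w_0\in V^{ev}_0$. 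I would first argue, exactly as in the proof of Lemma \ref{lemoddrel}, that in the presence of Condition (i) the scalar components $x$ automatically lie in $I$ (so the $x\in I$ clause is subsumed), using the already-known relation $B(e_j,\sigma e_i - e_i) = -\epsilon_{-j}^{-1}(\sigma_{-j,i}-\delta_{-j,i})$ together with the formula for $F_h$; indeed the scalar part of $Q^0(\sigma e_i)$ is $-F_h((\sigma e_i)_h,(\sigma e_i)_h)$, a sum of products of entries of $\sigma^{hh}-e$ (plus diagonal terms that cancel appropriately since $e_i$ is isotropic), hence in $I$. Then the substantive content of Proposition \ref{propoddrel}(ii)--(iii) reduces to: $B_0(\sigma^{h0}e_i, w_0)\in I$ for all $w_0\in V^{ev}_0$ and all $i\in\Theta$ — which after reindexing ($i\leftrightarrow -i$) is Condition (iii) above — and $B_0((\sigma^{00}-id_{V_0})v_0, w_0)\in I$ for all $v_0\in V^{ev}_0, w_0\in V^{ev}_0$, which is Condition (iv). Finally, Condition (ii) above, $(\sigma^{0h}v_0)_i\in I$, is not among Proposition \ref{propoddrel}'s conditions but follows from Condition (i) together with Lemma \ref{lemodd}(ii): $(\widetilde\sigma^{0h}v_0)_i = -\epsilon_i B_0(\sigma^{h0}e_{-i},v_0)$, so applying this to $\widetilde\sigma$ in place of $\sigma$ and noting $\sigma^{hh}\equiv e$ forces $\widetilde\sigma^{hh}\equiv e$, hence these quantities lie in $I$ once (iii) holds; more directly, $B(e_j,\sigma v_0 - v_0) = -\epsilon_{-j}^{-1}(\sigma^{0h}v_0)_{-j}$ as in the proof of Lemma \ref{lemoddrel}, and the left side is controlled once we know $\sigma v_0 - v_0$ together with its Heisenberg companion lies in $\M_{\max}$, which is what Condition (iii) of Proposition \ref{propoddrel} plus the preceding reductions give. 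I would assemble these equivalences in both directions.

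\textbf{Main obstacle.}
The only real subtlety is the bookkeeping of components and the role of the even part $V^{ev}_0$. One must be careful that the ``$B_0(v_0,w_0)\in I\ \forall w_0\in V^{ev}_0$'' clause in the definition of $(\M_0)_{\max}$ is quantified over $V^{ev}_0$ and not all of $V_0$, and that the elements $\sigma^{h0}e_i$ and $(\sigma^{00}-id)v_0$ (for $v_0\in V^{ev}_0$) indeed lie in $V^{ev}_0$ so that $Q^0$ of them makes sense as elements tested against $(\M_0)_{\max}$ — this is where the remark that every $\sigma\in\U(\V)$ preserves $V^{ev}$ (stated before Definition \ref{defrelfp}) is used. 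A second point requiring a little care is checking that the $x\in I$ clauses are genuinely automatic given Condition (i), so that they do not need to be listed as separate conditions; this is exactly the computation already carried out in the proof of Lemma \ref{lemoddrel} and should be cited rather than repeated. Beyond that, the proof is a routine translation and I would simply write ``Follows from Proposition \ref{propoddrel} by unwinding the definition of $(\M_0)_{\max}$, arguing as in the proof of Lemma \ref{lemoddrel} that the scalar components lie in $I$ automatically.''
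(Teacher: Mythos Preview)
Your approach is correct and is exactly what the paper does: its entire proof reads ``Follows from Proposition \ref{propoddrel}.'' You have simply spelled out the unwinding of the definition of $(\M_0)_{\max}$ that the paper leaves implicit; in particular your observation that condition (ii) is obtained via the computation $B(e_j,\sigma v_0-v_0)=-\epsilon_{-j}^{-1}(\sigma^{0h}v_0)_{-j}$ already appearing in the proof of Lemma \ref{lemoddrel} is the right way to close the argument.
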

\begin{proof}
Follows from Proposition \ref{propoddrel}.
\end{proof}

We leave it to the reader to deduce the next lemma from the last one.
\begin{lemma}\label{lemgen}
Let $\sigma, \tau\in \U_{2n}(\V_0)$. Then $\sigma\equiv \tau \bmod \U_{2n}(\V_0,(\I_0)_{\max})$ iff Conditions (i)-(iv) below are satisfied.
\begin{enumerate}[(i)]
\item $\sigma^{hh}\equiv \tau^{hh}\bmod I$.
\item $((\sigma^{0h}-\tau^{0h})v_0)_i \in I$ for any $v_0\in V^{ev}_0$ and $i\in\Theta$.
\item $B_0((\sigma^{h0}-\tau^{h0})e_i,v_0)\in I$ for any $v_0\in V^{ev}_0$ and $i\in\Theta$.
\item $B_0((\sigma^{00}-\tau^{00})v_0,w_0)\in I$ for any $v_0,w_0\in V^{ev}_0$.
\end{enumerate}
\end{lemma}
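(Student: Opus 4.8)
The plan is to reduce Lemma~\ref{lemgen} to Lemma~\ref{lemumax} by the obvious translation trick: for $\sigma,\tau\in\U_{2n}(\V_0)$ we have $\sigma\equiv\tau\bmod\U_{2n}(\V_0,(\I_0)_{\max})$ iff $\widetilde\tau\sigma\in\U_{2n}(\V_0,(\I_0)_{\max})$ (recall $\U_{2n}(\V_0,(\I_0)_{\max})$ is normal in $\U_{2n}(\V_0)$, so left and right cosets coincide and it does not matter whether we write $\widetilde\tau\sigma$ or $\sigma\widetilde\tau$). So I would apply Lemma~\ref{lemumax} to the element $\rho:=\widetilde\tau\sigma$ and unwind each of Conditions (i)--(iv) for $\rho$ into the corresponding condition relating $\sigma$ and $\tau$.

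First I would record how the block maps behave under composition. Writing $V=V_h\oplus V_0$, for $\alpha,\beta\in\U_{2n}(\V_0)$ one has the matrix-style identities
\begin{align*}
(\alpha\beta)^{hh}&=\alpha^{hh}\beta^{hh}+\alpha^{0h}\beta^{h0}, & (\alpha\beta)^{h0}&=\alpha^{hh}\beta^{h0}+\alpha^{0h}\beta^{00},\\
(\alpha\beta)^{0h}&=\alpha^{h0}\beta^{hh}+\alpha^{00}\beta^{0h}, & (\alpha\beta)^{00}&=\alpha^{h0}\beta^{h0}+\alpha^{00}\beta^{00},
\end{align*}
(with the usual composition of linear maps; here $\alpha^{hh}$ is viewed as a matrix in $\Mat_{2n}(R)$ as in the text). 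Next I would feed $\rho=\widetilde\tau\sigma$ into these. Since $\tau^{hh}$ is invertible mod nothing in particular, but $\widetilde\tau$ is a genuine element of $\U_{2n}(\V_0)$, Condition (i) of Lemma~\ref{lemumax} for $\rho$ reads $\widetilde\tau^{hh}\sigma^{hh}+\widetilde\tau^{0h}\sigma^{h0}\equiv e\bmod I$; multiplying on the left by $\tau^{hh}$ and using the block identities applied to $\tau\widetilde\tau=id$ (which give $\tau^{hh}\widetilde\tau^{hh}+\tau^{0h}\widetilde\tau^{h0}=e$, etc.) converts this into $\sigma^{hh}\equiv\tau^{hh}\bmod I$ after absorbing the cross terms, which lie in $V_hI$ because of the relations in Lemma~\ref{lemodd}(ii),(iii) bounding $\widetilde\tau^{0h}$ and $\sigma^{h0}$ against the ``$hh$''-blocks. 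I would handle Conditions (ii)--(iv) the same way: substitute the composition formula, multiply through by the relevant block of $\tau$, and use the defining relations of $\U_{2n}(\V_0)$ from Lemma~\ref{lemodd} together with the fact that $\sigma,\tau$ both preserve $V_0^{ev}$ to see that all the ``mixed'' terms that appear either lie in $I$ already or differ by something in $I$ from the term one wants.

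The main obstacle is bookkeeping rather than anything conceptual: one must be careful that the reductions modulo $I$ are applied to the right pairs of arguments $v_0,w_0\in V_0^{ev}$ and that inverting $\tau$ does not introduce terms outside $V_0^{ev}$ — this is exactly where the parenthetical remark ``every $\sigma\in\U(\V)$ restricts to an automorphism of $V^{ev}$'' (so also $\widetilde\tau$ does) is needed, ensuring $\widetilde\tau^{00}$ maps $V_0^{ev}$ to $V_0^{ev}$ and $\widetilde\tau^{0h}e_i$ need not be tested. Since the paper explicitly says ``We leave it to the reader to deduce the next lemma from the last one,'' I would in fact only sketch this: state that it follows from Lemma~\ref{lemumax} applied to $\widetilde\tau\sigma$ together with the block composition formulas and the identities of Lemma~\ref{lemodd}, and leave the elementary verification to the reader.
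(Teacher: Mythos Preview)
Your approach—reducing to Lemma~\ref{lemumax} applied to $\rho=\widetilde\tau\sigma$ and then unwinding via the block-composition identities together with the relations of Proposition~\ref{propodd} and the fact that $V^{ev}=V_h\oplus V_0^{ev}$—is exactly what the paper intends (it gives no proof beyond ``deduce the next lemma from the last one''). Note that several of your displayed block-composition formulas have transposed superscripts (for instance $(\alpha\beta)^{h0}=\alpha^{h0}\beta^{hh}+\alpha^{00}\beta^{h0}$, not $\alpha^{hh}\beta^{h0}+\alpha^{0h}\beta^{00}$), but once these are written correctly the verification goes through as you describe.
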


\begin{proposition}\label{propcumax}
Let $\sigma\in\U_{2n}(\V_0)$. Then $\sigma\in \CU_{2n}(\V_0,(\I_0)_{\max})$ iff Conditions (i)-(v) below are satisfied.
\begin{enumerate}[(i)]
\item $\sigma_{ij}\in I$ for any $i,j\in \Theta$ such that $i\neq j$.
\item $\sigma_{ii}-\sigma_{jj}\in I$ for any $i,j\in \Theta$ such that $i\neq j$.
\item $(\sigma^{0h}v_0)_i \in I$ for any $v_0\in V^{ev}_0$ and $i\in\Theta$.
\item $B_0(\sigma^{h0}e_i,v_0)\in I$ for any $v_0\in V^{ev}_0$ and $i\in\Theta$.
\item $B_0((\sigma^{00}-id_{V_0}\sigma_{ii})v_0,w_0)\in I$ for any $v_0,w_0\in V^{ev}_0$ and $i\in \Theta$.
\end{enumerate}
\end{proposition}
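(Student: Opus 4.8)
The statement characterizes membership in $\CU_{2n}(\V_0,(\I_0)_{\max})$ by five congruence conditions. Recall that by definition $\sigma\in\CU_{2n}(\V_0,(\I_0)_{\max})$ iff $\sigma\in\NU_{2n}(\V_0,(\I_0)_{\max})=\U_{2n}(\V_0)$ (which is automatic here, since the relative parameter is maximal, as noted before Lemma \ref{lemumax}) and $[\sigma,\EU_{2n}(\V_0)]\subseteq\U_{2n}(\V_0,(\I_0)_{\max})$. So the real content is: translate the condition ``$[\sigma,\E]\subseteq\U_{2n}(\V_0,(\I_0)_{\max})$'' into conditions on the matrix entries of $\sigma$ and on the maps $\sigma^{0h},\sigma^{h0},\sigma^{00}$. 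The natural tool is Lemma \ref{lemgen}: $\tau\equiv\sigma\bmod\U_{2n}(\V_0,(\I_0)_{\max})$ is governed by the four explicit entrywise/form-theoretic congruences there. Thus $\sigma\in\CU_{2n}(\V_0,(\I_0)_{\max})$ iff for every elementary transvection $\epsilon$, applying Lemma \ref{lemgen} to the pair $({}^{\epsilon}\sigma,\sigma)$ (equivalently to $(\epsilon\sigma\widetilde\epsilon,\sigma)$) yields all four congruences.

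The plan is to compute, for a short root transvection $T_{ij}(x)$ and an extra short root transvection $T_k(v_0,x)$, the four differences
\[
({}^{\epsilon}\sigma)^{hh}-\sigma^{hh},\quad ({}^{\epsilon}\sigma)^{0h}-\sigma^{0h},\quad ({}^{\epsilon}\sigma)^{h0}-\sigma^{h0},\quad ({}^{\epsilon}\sigma)^{00}-\sigma^{00},
\]
using the explicit matrix formulas for $T_{ij}^{hh},T_{ij}^{h0},\dots$ and $T_k^{hh},\dots$ recorded right after Definition \ref{defel}, together with Lemma \ref{lemodd}(i)--(iv) (which express $\widetilde\sigma$ in terms of $\sigma$). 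First I would handle the short root transvections $T_{ij}(x)$ with $x=1$: since $T_{ij}^{hh}(1)=e+e^{ij}+e^{-j,-i}\epsilon_{-j}\epsilon_i$ and $T_{ij}$ acts trivially on $V_0$, conjugation only mixes the $h$-block and the $0h$/$h0$ blocks in a controlled way; reading off the $(p,q)$-entries of $\sigma^{hh}-{}^{T_{ij}(1)}\sigma^{hh}$ one sees that it lies in $I$ for all $p,q$ precisely when all off-diagonal $\sigma_{pq}$ lie in $I$ and all differences $\sigma_{pp}-\sigma_{qq}$ of diagonal entries lie in $I$ — this produces conditions (i) and (ii). The $0h$ and $h0$ blocks of the commutator with $T_{ij}(1)$ stay inside $V_0 I$ automatically once (i), (ii) hold, so no new condition appears from them for short root transvections, while the $00$ block is unchanged. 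Next I would conjugate by the extra short root transvections $T_k(v_0,x)$: here $T_k^{h0}(v_0,x)w_h=-v_0w_{-k}$ and $T_k^{0h}(v_0,x)w_0=-e_k\epsilon_k B_0(v_0,w_0)$, so conjugation genuinely moves the $0h$, $h0$ and $00$ blocks. Computing $({}^{T_k(v_0,x)}\sigma)^{0h}v_0'-\sigma^{0h}v_0'$ and applying Lemma \ref{lemgen}(ii) gives condition (iii) (that $B_0(\sigma^{h0}e_i,v_0)\in I$); dually Lemma \ref{lemgen}(iii) gives condition (iv) (that $(\sigma^{0h}v_0)_i\in I$), where the duality is exactly Lemma \ref{lemodd}(ii)--(iii). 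Finally, the $00$-block change $({}^{T_k(v_0,x)}\sigma)^{00}-\sigma^{00}$ combines a term $\sigma^{00}$ with a scalar coming from the $(k,k)$- or $(-k,-k)$-entry of $\sigma^{hh}$, and Lemma \ref{lemgen}(iv) forces $B_0((\sigma^{00}-\mathrm{id}_{V_0}\sigma_{ii})v_0,w_0)\in I$, which is condition (v). Collecting the conditions arising from all elementary generators — and using that it suffices to test $x=1$ for short root transvections and generators of $\K_0$ for the extra short ones, together with the relations in Lemma \ref{39} and the conjugation rules in Lemma \ref{43} to reduce to finitely many index choices — yields exactly (i)--(v). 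Conversely, if (i)--(v) hold, the same computations show each commutator $[\sigma,\epsilon]$ satisfies Lemma \ref{lemgen}(i)--(iv) with $\tau=\mathrm{id}$, hence lies in $\U_{2n}(\V_0,(\I_0)_{\max})$; since these generate $\EU_{2n}(\V_0)$ and $\U_{2n}(\V_0,(\I_0)_{\max})$ is normalized by $\EU_{2n}(\V_0)$ (Corollary \ref{cor2}), we get $[\sigma,\EU_{2n}(\V_0)]\subseteq\U_{2n}(\V_0,(\I_0)_{\max})$, i.e. $\sigma\in\CU_{2n}(\V_0,(\I_0)_{\max})$.

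The main obstacle I anticipate is purely bookkeeping: carefully tracking the $\epsilon_i$-twists and the passage between $\sigma$ and $\widetilde\sigma$ via Lemma \ref{lemodd}(i)--(iv) when extracting the $0h$/$h0$ conditions, so that conditions (iii) and (iv) come out in the stated (asymmetric-looking) form rather than a dual one, and checking that the $(k,k)$ versus $(-k,-k)$ diagonal entries appearing in the $00$-block computation are interchangeable modulo $I$ once condition (ii) is in force — this is what makes the single index $i$ in condition (v) legitimate. None of these steps is deep; the argument is a systematic commutator computation with the elementary generators, organized around Lemma \ref{lemgen}.
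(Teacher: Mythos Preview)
Your proposal is correct and follows essentially the same approach as the paper: reformulate membership in $\CU_{2n}(\V_0,(\I_0)_{\max})$ as $\sigma\tau\equiv\tau\sigma\bmod\U_{2n}(\V_0,(\I_0)_{\max})$ (equivalently ${}^{\epsilon}\sigma\equiv\sigma$) for all elementary transvections, then apply Lemma~\ref{lemgen} to the pairs obtained by letting $\tau$ range over $T_{ij}(1)$ and $T_i(v_0,y)$ to read off Conditions (i)--(v); for the converse, verify Lemma~\ref{lemgen} for each generator and pass to products using normality of $\U_{2n}(\V_0,(\I_0)_{\max})$. The paper's write-up is terser and invokes full normality of $\U_{2n}(\V_0,(\I_0)_{\max})$ (established just before Lemma~\ref{lemumax}) rather than Corollary~\ref{cor2}, but the argument is the same.
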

\begin{proof}
($\Rightarrow$) Suppose that $\sigma\in \CU_{2n}(\V_0,(\I_0)_{\max})$. Let $\tau\in \EU_{2n}(\V_0)$. Then $[\sigma,\tau]\in  \U_{2n}(\V_0,(\I_0)_{\max})$ by the definition of $\CU_{2n}(\V_0,(\I_0)_{\max})$. Hence $\sigma\tau\equiv \tau\sigma \bmod \U_{2n}(\V_0,(\I_0)_{\max})$. By letting $\tau$ vary over all elementary transvections $T_{ij}(1)~(i,j\in\Theta,i\neq\pm j)$ and $T_{i}(v_0,y)~(i\in\Theta,(v_0,y)\in\K_0)$ and applying Lemma \ref{lemgen} one gets Conditions (i)-(v).\\
($\Leftarrow$) Suppose that Conditions (i)-(v) are satisfied. Then it follows from Lemma \ref{lemgen}  that $[\sigma,\tau]\in  \U_{2n}(\V_0,$ $(\I_0)_{\max})$ for any elementary transvection $\tau$. Since $\U_{2n}(\V_0,(\I_0)_{\max})$ is normal, it follows that $\sigma \in \CU_{2n}(\V_0,$ $(\I_0)_{\max})$.
\end{proof}

The following lemma will be used in the proof of the Sandwich Classification Theorem \ref{thmsct}.
\begin{lemma}\label{lemcu}
Let $\I_0=(I,\M_0)$ be an odd form ideal. Let $\sigma\in \NU_{2n}(\V_0,\I_0)\cap \CU_{2n}(\V_0, (\I_0)_{\max})$ such that $Q^0(\sigma e_i)\in\M_0$ for any $i\in\Theta$ and further $Q^0([\sigma,\tau] e_i)\in\M_0$ for any elementary transvection $\tau$ and $i\in\Theta$. Then $\sigma \in\CU_{2n}(\V_0,\I_0)$.
\end{lemma}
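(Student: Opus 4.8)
The plan is to unwind the definition of $\CU_{2n}(\V_0,\I_0)$, verify the membership criterion of Proposition~\ref{propoddrel}, and use Proposition~\ref{propcumax} to control $\sigma$ and $\widetilde\sigma$ modulo $(\I_0)_{\max}$. Since $\sigma\in\NU_{2n}(\V_0,\I_0)$ is assumed, it suffices to show $[\sigma,\tau]\in\U_{2n}(\V_0,\I_0)$ for every elementary transvection $\tau$: the group $\EU_{2n}(\V_0)$ is generated by elementary transvections, it lies in $\NU_{2n}(\V_0,\I_0)$ (Corollary~\ref{cor2}) and hence normalizes $\U_{2n}(\V_0,\I_0)$, and $[\sigma,\tau_1\tau_2]=[\sigma,\tau_1]\,{}^{\tau_1}[\sigma,\tau_2]$, so $\{\tau\in\EU_{2n}(\V_0):[\sigma,\tau]\in\U_{2n}(\V_0,\I_0)\}$ is a subgroup.

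Fix an elementary transvection $\tau$ and put $\rho:=[\sigma,\tau]$. Because $\sigma\in\CU_{2n}(\V_0,(\I_0)_{\max})$, we have $\rho\in\U_{2n}(\V_0,(\I_0)_{\max})$ straight from the definition of the full congruence subgroup. By Proposition~\ref{propoddrel} it remains to check for $\rho$ the three conditions: (i) $\rho^{hh}\equiv e\bmod I$; (ii) $Q^0(\rho e_i)\in\M_0$ for all $i\in\Theta$; (iii) $Q^0(\rho v_0)\minus Q^0(v_0)\in\M_0$ for all $v_0\in V_0^{ev}$. Condition (i) holds because $\rho\in\U_{2n}(\V_0,(\I_0)_{\max})$ (apply Proposition~\ref{propoddrel}, equivalently Lemma~\ref{lemumax}(i), to $(\I_0)_{\max}$), and (ii) is precisely the hypothesis $Q^0([\sigma,\tau]e_i)\in\M_0$. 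So the entire content is condition (iii).

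For (iii), note first that $\rho\in\U_{2n}(\V_0,(\I_0)_{\max})$ gives $(\rho v_0)_h=\rho^{0h}v_0\in V_hI$ for $v_0\in V_0^{ev}$ (Lemma~\ref{lemumax}(ii)); since $(v_0)_h=0$, Lemma~\ref{lemtecrel} reduces (iii) to showing $Q(\rho v_0)\minus Q(v_0)\in\M$, where $\M=\K_h\hp I\+\M_0$ is the odd form parameter on $\V$ corresponding to $\I_0$. Since $\widetilde\sigma\in\CU_{2n}(\V_0,(\I_0)_{\max})$ too, Proposition~\ref{propcumax} supplies $(\widetilde\sigma^{0h}v_0)_m\in I$, $(\sigma^{0h}w_0')_m\in I$ and $B_0(\sigma^{h0}e_i,w_0')\in I$ for $v_0,w_0'\in V_0^{ev}$, together with $\widetilde\sigma_{ij}\in I$ ($i\neq j$) and $\widetilde\sigma_{ii}\equiv\widetilde\sigma_{jj}\bmod I$ (and likewise for $\sigma$). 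Computing $\rho v_0=\sigma\tau\widetilde\sigma\widetilde\tau v_0$ explicitly, distinguishing $\tau=T_{ij}(y)$ from $\tau=T_k(w_0',s)$, and inserting these congruences, one arrives at an identity
\[
\rho v_0-v_0\;=\;\textstyle\sum_m(\sigma x_m)\gamma_m\;+\;h,\qquad h\in V_hI,
\]
in which each $x_m$ is one of the isotropic basis vectors $e_k$ or, when $\tau$ is an extra short root transvection, the vector $w_0'$ occurring in $\tau$, and each $\gamma_m\in I$. In the extra short root case the term $e_k\epsilon_kB_0(w_0',v_0)$ of $\widetilde\tau v_0$ produces a~priori ``large'' contributions, and one has to verify that they cancel modulo $V_hI$ against the $\sigma^{h0}$-contributions; this rests on $\sigma^{hh}e_k\equiv e_k\sigma_{kk}\bmod V_hI$ and an identity among the diagonal entries $\sigma_{kk}$ forced by $\sigma\widetilde\sigma=\mathrm{id}$ and $\sigma,\widetilde\sigma\in\U_{2n}(\V_0)$.

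To conclude, each of $(\sigma e_k)\gamma_m$, $(\sigma w_0')\gamma_m$ and $h$ has the property that its image under $Q$ lies in $\M$ modulo a central summand $(0,*)$ with $*\in I$: for $(\sigma e_k)\gamma_m$ one uses $Q^0(\sigma e_k)\in\M_0$ together with $\sigma^{hh}e_k\equiv e_k\sigma_{kk}\bmod V_hI$ and Lemma~\ref{lemtecrel} to get $Q(\sigma e_k)\equiv Q(e_k\sigma_{kk})\bmod\M$, and then $Q(e_k\sigma_{kk})\hp\gamma_m=(e_k\sigma_{kk}\gamma_m,0)\in\K_h\hp I\subseteq\M$; for $(\sigma w_0')\gamma_m$ one uses $Q(\sigma w_0')\minus Q(w_0')\in\K$ and $Q(w_0')\hp\gamma_m\in\K_0\hp I\subseteq\M$ up to such a central summand; and for $h=\sum_me_md_m$ ($d_m\in I$) one has $\plus_m(e_m,0)\hp d_m=(h,F_h(h,h))\in\K_h\hp I\subseteq\M$, so $(h,0)$ is of the required form. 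Expanding $Q(\rho v_0)$ via $Q(a+b)=Q(a)\+Q(b)\minus(0,B(a,b))$ and $Q(ax)=Q(a)\hp x$, and pulling the $\M$-summands to one side (using $\M\lhd\h^{ev}$), one is left with $Q(\rho v_0)\minus Q(v_0)\equiv(0,z)\bmod\M$ for a single central element $(0,z)$ with $z\in I$; as $Q(\rho v_0)\minus Q(v_0)\in\K$ and $\M\subseteq\K$ this forces $z=\bar z$, and keeping track of the summands shows $z=t+\bar t$ with $t\in I$, so $(0,z)\in\M_{\min}\subseteq\M$ and (iii) follows. I expect the hard part to be the explicit commutator computation in the extra short root case, and the verification that the leftover central element $(0,z)$ lies in $\M_{\min}$ and not merely in $\M_{\max}$.
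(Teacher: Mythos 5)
Your overall framework is the same as the paper's: reduce to $[\sigma,\tau]\in\U_{2n}(\V_0,\I_0)$ for the generators $\tau$, dispose of conditions (i) and (ii) of Proposition \ref{propoddrel} via $\CU_{2n}(\V_0,(\I_0)_{\max})$ and the hypothesis, and then verify condition (iii) by an explicit computation of the commutator, split into the short and extra short root cases. The short root case works exactly as you describe. But in the extra short root case your key intermediate identity is not correct, and your subsequent bookkeeping cannot be repaired without changing it. Take $\tau=T_k(w_0',s)$ and $\xi=[\sigma,\tau]$; writing $c=\epsilon_kB_0(w_0',v_0)$ one finds
\begin{align*}
\xi v_0-v_0=(\sigma e_k\epsilon_k s-\sigma w_0')(\widetilde\sigma^{0h}v_0)_{-k}-(\sigma e_k)\epsilon_kB_0(w_0',\widetilde\sigma^{00}v_0)+e_kc+(\sigma e_k\epsilon_k s-\sigma w_0')\widetilde\sigma_{-k,k}c-(\sigma e_k)\epsilon_kB_0(w_0',\widetilde\sigma^{h0}e_k)c .
\end{align*}
All scalars here lie in $I$ except $\epsilon_kB_0(w_0',\widetilde\sigma^{00}v_0)=\epsilon_kB_0(\sigma^{00}w_0',v_0)$ and $c$ themselves. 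Your proposed cancellation (using $\sigma e_k\equiv e_k\sigma_{kk}+\sigma^{h0}e_k\bmod V_hI$, Proposition \ref{propcumax}(v) and the diagonal identity from $\sigma\widetilde\sigma=\mathrm{id}$) does kill the $e_k$-component: $e_k\epsilon_k(1-\sigma_{kk}\bar\sigma_{jj}\bar 1^{-1})B_0(w_0',v_0)\in V_hI$. But it leaves the $V_0$-component $-\sigma^{h0}e_k\,\epsilon_k\bar\sigma_{jj}\bar 1^{-1}B_0(w_0',v_0)$, whose scalar is in general \emph{not} in $I$. So the claimed decomposition $\xi v_0-v_0=\sum_m(\sigma x_m)\gamma_m+h$ with \emph{all} $\gamma_m\in I$ and $h\in V_hI$ is false: the coefficient of $\sigma e_k$ can only be taken in $R$, not in $I$. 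This matters because your absorption step for the terms $(\sigma e_k)\gamma_m$ genuinely needs $\gamma_m\in I$ (you route it through $Q(e_k\sigma_{kk})\hp\gamma_m\in\K_h\hp I\subseteq\M$), so the leftover term is not covered by any of the three mechanisms in your final paragraph.

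The repair is exactly the point the paper's proof exploits and you never invoke: the hypothesis $Q^0(\sigma e_k)\in\M_0$ together with the $\hp$-stability of $\M_0$ under \emph{all} of $R$ lets you keep the whole contribution of $\sigma e_k$ in one piece, since $Q^0((\sigma e_k)a)=Q^0(\sigma e_k)\hp a\in\M_0$ for arbitrary $a\in R$. Accordingly the correct shape of the computation (this is the paper's Case 2) is
\begin{align*}
Q^0(\xi v_0)\minus Q^0(v_0)=Q^0(\sigma e_k)\hp a\+(Q^0(\sigma w_0')\minus Q^0(w_0'))\hp b\+(w_0',s)\hp b\+(0,c'+\bar c')
\end{align*}
with $a\in R$ but only $b,c'\in I$; the $w_0'$-terms are then absorbed via $\K_0\hp I\subseteq(\M_0)_{\min}$, as you do. Working with $Q^0$ directly also spares you the explicit tracking of hyperbolic leftovers such as $e_kc$, which in the $Q^0$-picture only feed into the central $(0,c'+\bar c')$ term. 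As written, your argument has a genuine gap at this point, though it is fixable by this change of bookkeeping.
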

\begin{proof}
Let $\tau$ be an elementary transvection and set $\xi:=[\sigma,\tau]$. In order to prove that $\sigma \in\CU_{2n}(\V_0,\I_0)$ it suffices to show that $\xi\in \U_{2n}(\V_0,\I_0)$ (note that $\U_{2n}(\V_0,\I_0)$ is E-normal by Corollary \ref{cor2}). In view of Proposition \ref{propoddrel} it suffices to show that $Q^0(\xi v_0)\minus Q^0(v_0)\in \M_0$ for any $v_0\in V^{ev}_0$.\\
\\
\underline{Case 1} Suppose that $\tau=T_{ij}(x)$. A straightforward computation shows that 
\[Q^0(\xi v_0)\minus Q^0(v_0)=Q^0(\sigma e_i)\hp a\+Q^0(\sigma e_{-j})\hp b\+(0,c\+\bar c)\]
for some $a,b,c\in I$. Hence $Q^0(\xi v_0)\minus Q^0(v_0)\in\M_0$.\\
\\
\underline{Case 2} Suppose that $\tau=T_{i}(w_0,x)$. A straightforward computation shows that 
\[Q^0(\xi v_0)\minus Q^0(v_0)=Q^0(\sigma e_i)\hp a\+(Q^0(\sigma w_0)\minus Q^0(w_0))\hp b\+(w_0,x)\hp b\+(0,c\+\bar c)\]
for some $a\in R$ and $b,c\in I$. Hence $Q^0(\xi v_0)\minus Q^0(v_0)\in\M_0$.\\
\end{proof}
\section{Sandwich classification of E-normal subgroups}
In this section $R$ denotes a commutative ring with pseudoinvolution $~\bar{}~$. Let $\V$ denote an odd quadratic space over $(R,~\bar{}~)$ such that $\ind \V\geq 3$. As mentioned in Definition \ref{defwit}, $\U(\V)$ is isomorphic to an odd hyperbolic unitary group $\U_{6}(\V_0)$ where $\V_0$ is some odd quadratic space. The main goal of this section is to classify the E-normal subgroups of $\U_{6}(\V_0)$. We start with some easy-to-check lemmas which will be used in the proof of Theorem \ref{thmm}.

\begin{lemma}\label{pre}
Let $G$ be a group and $a,b,c\in G$. Then ${}^{b^{-1}}[a,bc]=[b^{-1},a][a,c]$.
\end{lemma}

The following lemma will be used in the proof of Theorem \ref{thmm} without explicit reference.
\begin{lemma}\label{pre2}
Let $G$ be a group, $E$ a subgroup and $a\in G$. Suppose that $b\in G$ is a product of $n$ elements of the form $^{\epsilon}a^{\pm 1}$ where $\epsilon\in E$. Then for any $\epsilon'\in E$
\begin{enumerate}[(i)]
\item $^{\epsilon'}b$ is a product of $n$ elements of the form $^{\epsilon}a^{\pm 1}$ and
\item $[\epsilon',b]$ is a product of $2n$ elements of the form $^{\epsilon}a^{\pm 1}$.
\end{enumerate}
\end{lemma}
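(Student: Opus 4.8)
The plan is to argue by a direct manipulation of the given factorization of $b$, using only that $E$ is closed under multiplication and inversion and that conjugation is an anti-homomorphism in the appropriate sense. First I would fix a factorization $b={}^{\epsilon_1}a^{\delta_1}\cdots{}^{\epsilon_n}a^{\delta_n}$ with each $\epsilon_i\in E$ and each $\delta_i\in\{+1,-1\}$; such a factorization exists by hypothesis.

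For part (i), the key observation is that for any $h,k\in G$ one has ${}^h({}^k g)={}^{hk}g$. Applying this factor by factor,
\[
{}^{\epsilon'}b=\epsilon'\Bigl({}^{\epsilon_1}a^{\delta_1}\cdots{}^{\epsilon_n}a^{\delta_n}\Bigr)\epsilon'^{-1}
=\prod_{i=1}^{n}\epsilon'\,\bigl({}^{\epsilon_i}a^{\delta_i}\bigr)\,\epsilon'^{-1}
=\prod_{i=1}^{n}{}^{\epsilon'\epsilon_i}a^{\delta_i}.
\]
Since $E$ is a subgroup, $\epsilon'\epsilon_i\in E$ for each $i$, so the right-hand side exhibits ${}^{\epsilon'}b$ as a product of $n$ elements of the form ${}^{\epsilon}a^{\pm1}$ with $\epsilon\in E$, as required.

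For part (ii), I would write $[\epsilon',b]=\epsilon' b\epsilon'^{-1}b^{-1}=\bigl({}^{\epsilon'}b\bigr)\,b^{-1}$. By part (i), ${}^{\epsilon'}b$ is a product of $n$ elements of the form ${}^{\epsilon}a^{\pm1}$. On the other hand, inverting the fixed factorization of $b$ gives
\[
b^{-1}=\bigl({}^{\epsilon_1}a^{\delta_1}\cdots{}^{\epsilon_n}a^{\delta_n}\bigr)^{-1}
={}^{\epsilon_n}a^{-\delta_n}\cdots{}^{\epsilon_1}a^{-\delta_1},
\]
using that $\bigl({}^{\epsilon}a^{\delta}\bigr)^{-1}={}^{\epsilon}a^{-\delta}$; this is again a product of $n$ elements of the form ${}^{\epsilon}a^{\pm1}$ with $\epsilon\in E$. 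Concatenating the two products yields a product of $2n$ such elements representing $[\epsilon',b]$, which finishes the proof.

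There is no real obstacle here: the statement is purely formal, and the only points one must not overlook are that $E$ must be a subgroup (so that $\epsilon'\epsilon_i\in E$) and that inverting a product of conjugates of $a^{\pm1}$ again yields conjugates of $a^{\pm1}$ by the same elements $\epsilon_i$, merely with the exponents negated and the order reversed. I would present the argument exactly in the two-step form above so that it can be invoked verbatim in the proof of Theorem \ref{thmm}.
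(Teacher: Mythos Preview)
Your proof is correct and is exactly the straightforward verification the paper has in mind; the paper itself does not give a proof but merely lists this as an ``easy-to-check'' lemma, and your two-step argument (conjugation of each factor for (i), then $[\epsilon',b]=({}^{\epsilon'}b)b^{-1}$ together with inversion of the product for (ii)) is the natural one.
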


\begin{lemma}\label{lemesdspec}
Let $v\in V$ be an isotropic element such that $v_{-1}=0$. Then 
\[T_{e_1v}(0)=T_1(Q^0(v)\hp(-1)\+(0,\bar 1 v_1+\overline{\bar 1 v_1}))\prod\limits_{i=2}^{-2}T_{i,-1}(v_i)\in \EU_{6}(\V_0).\]
\end{lemma}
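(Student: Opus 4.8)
The statement to prove is an explicit factorization of the ESD transvection $T_{e_1v}(0)$ for an isotropic $v\in V$ with $v_{-1}=0$. The natural strategy is direct verification: compute the action of both sides on an arbitrary $w\in V$ (or just on the basis vectors $e_k$ and on $V_0$), and check they agree; then, since both sides are known to lie in $\GL(V)$ (the left side by Lemma \ref{lemesd}, the right side as a product of elementary transvections), equality of the underlying linear maps forces equality in $\U(\V)$. The point of the lemma is really the membership in $\EU_6(\V_0)$, which is immediate once the identity is established, because the right-hand side is manifestly a product of elementary transvections.

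First I would write $v = \sum_{i\in\Theta} e_i v_i + v_0$ with $v_{-1}=0$, so $v = e_1v_1 + (\text{terms }e_i v_i,\ i\neq \pm 1) + v_0$, and record that isotropy of $v$ together with $v_{-1}=0$ translates, via Lemma \ref{lemtec} and the formula $Q^0(v)=(v_0,-F_h(v_h,v_h))$, into the condition that $Q^0(v) \equiv 0 \bmod \K_0$ holds only up to the correction coming from $F_h(v_h,v_h)$; more precisely $Q(v)=(v,0)\in\h$ and the constraint $Q_\K(v)=0$ is equivalent to $(v, F_h(v_h,v_h))\in\K$ after subtracting the $\K_h$-part, which is what makes $(e_1,v)$ an admissible pair for $T_{e_1v}(0)$ in the first place (one needs $e_1$ isotropic, true, $v\in V$ with $(v,0)$ — wait, one needs $(v,x)\in\K$; here $x=0$, and $e_1, v$ orthogonal, i.e. $B(e_1,v)=0$, which since $B_h(e_1,e_i)$ picks out $v_{-1}$, holds precisely because $v_{-1}=0$). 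So the hypotheses are exactly calibrated to make the left side well-defined. Then I would expand the product on the right using the explicit matrix/action formulas for $T_i(v_0,x)$ and $T_{i,-1}(v_i)$ listed after Definition \ref{defel}, multiplying them out in the given order; the extra short root transvection $T_1(\xi)$ with $\xi = Q^0(v)\hp(-1)\+(0,\bar1 v_1 + \overline{\bar1 v_1})$ handles the $e_1$-component and the $V_0$-component, while the short root transvections $T_{i,-1}(v_i)$ for $i=2,\dots,-2$ build up the remaining $e_i$-components.

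The main computation is checking the action on $e_1$ (equivalently $e_{-1}$, since only $e_{-1}$ is moved by the right side, as each $T_{i,-1}$ moves $e_{1}$ and $e_{i}$, wait — $T_{i,-1}(v_i)$ moves $e_1$ and $e_{-i}$; careful bookkeeping of which indices move is the crux). Concretely, $T_{e_1v}(0)$ sends $w\mapsto w + e_1\bar1^{-1}B(v,w) + vB(e_1,w)$, and $B(e_1,w)=\epsilon_{-1}^{-1}w_{-1}$-type expressions pick out only $w_{-1}$; so $T_{e_1v}(0)$ fixes every $e_k$ with $k\neq -1$ and fixes $V_0$ pointwise except for the $e_1\bar1^{-1}B(v,\cdot)$ term, and sends $e_{-1}\mapsto e_{-1} + e_1\bar1^{-1}B(v,e_{-1}) + v B(e_1,e_{-1}) = e_{-1}+ e_1\bar 1^{-1}B(v,e_{-1}) + v$. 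I expect the delicate part to be matching the scalar $\bar1^{-1}B(v,e_{-1})$ and the $\overline{\phantom{x}}$-symmetrized term $\bar1 v_1 + \overline{\bar1 v_1}$ in $\xi$: this is where the pseudoinvolution identities $\overline{xy}=\bar y\bar1^{-1}\bar x$ and the relation $B_h = F_h - \overline{F_h}$ get used, and where a sign or a $\bar1^{\pm1}$ could easily go astray. Once the actions are verified to coincide on $e_k$ ($k\in\Theta$) and on $V_0$ — which determines the linear map on all of $V=V_h\oplus V_0$ — the two sides are equal as elements of $\GL(V)$ hence as elements of $\U(\V)$, and membership in $\EU_6(\V_0)$ follows since the right side is visibly a product of an elementary extra short root transvection and elementary short root transvections. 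I would present the computation compactly by evaluating on $e_{-1}$ and on a general $v_0\in V_0$, leaving the (trivial) check on the other $e_k$ to the reader.
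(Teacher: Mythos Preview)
Your approach is correct and is precisely what the paper intends: this lemma is listed among several ``easy-to-check lemmas'' and is given no proof in the paper, so direct verification of the identity on $e_{-1}$, on the remaining $e_k$, and on $V_0$ is exactly the implicit argument. Once the factorization is established, membership in $\EU_6(\V_0)$ is immediate, as you note.

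Two minor cautions for when you carry out the computation. First, the short root factors $T_{i,-1}(v_i)$ for $i\in\{2,3,-3,-2\}$ do not all commute with one another (e.g.\ $T_{2,-1}$ and $T_{-2,-1}$ differ by a $T_1(0,\cdot)$ via relation (R9)), so the order of the product $\prod_{i=2}^{-2}$ matters and should be respected when you multiply out. Second, your aside ``$T_{i,-1}(v_i)$ moves $e_1$ and $e_{-i}$'' is the right correction to your earlier slip: each $T_{i,-1}(v_i)$ moves $e_{-1}$ (by $e_iv_i$) and $e_{-i}$ (by $e_1\epsilon_1\bar v_i\epsilon_i$), and these $e_1$-contributions on the $e_{-i}$'s are exactly what match the terms $e_1\bar 1^{-1}B(v,e_k)$ on the left for $k\neq \pm 1$. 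The remaining $e_1$-coefficient on $e_{-1}$, together with the $v_0$-part and the $F_h$-correction, is absorbed by the extra short root factor $T_1(\xi)$ with $\xi=Q^0(v)\hp(-1)\+(0,\bar 1 v_1+\overline{\bar 1 v_1})$, which is where the pseudoinvolution identities you mention come into play.
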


\begin{lemma}\label{last}
Let $(v_0,y)\in \K_0$ and $x_1,\dots,x_n\in R$. Then
\[(v_0,y)\hp \sum\limits_{i=1}^nx_i=(\plus\limits_{i=1}^n (v_0,y)\hp x_i)\+(0,\sum\limits_{i>j} \bar x_i\bar 1^{-1}yx_j+\overline{\bar x_i\bar 1^{-1}yx_j}).\]
\end{lemma}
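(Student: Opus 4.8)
The plan is to prove the identity by induction on $n$. The base case $n=1$ is trivial: the right-hand side reduces to $(v_0,y)\hp x_1$ since the summation $\sum_{i>j}$ over $\{1\}$ is empty. For the inductive step, I would write $\sum_{i=1}^{n}x_i = \bigl(\sum_{i=1}^{n-1}x_i\bigr) + x_n$ and apply the composition law of the Heisenberg group $\h_0$ together with the distributivity property of $\hp$ over $\+$ stated in Definition \ref{defoqs} (recall $(\alpha\+\beta)\hp z = (\alpha\hp z)\+(\beta\hp z)$ is not quite what is needed; rather one uses that $\hp$ is an action of the multiplicative monoid, so $(v_0,y)\hp(s+t)$ must be expanded directly from the formula $(v_0,y)\hp z = (v_0z,\bar z\bar 1^{-1}yz)$).

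Concretely, the first key step is the two-term case: for $s,t\in R$,
\[
(v_0,y)\hp(s+t) = \bigl((v_0,y)\hp s\bigr)\+\bigl((v_0,y)\hp t\bigr)\minus(0,B_0(v_0s,v_0t)) \+ (0,\ast),
\]
where the correction term comes from comparing $\bar{(s+t)}\bar 1^{-1}y(s+t)$ with $\bar s\bar 1^{-1}ys+\bar t\bar 1^{-1}yt$; expanding gives the cross terms $\bar s\bar 1^{-1}yt + \bar t\bar 1^{-1}ys$. Since $v_0\in V_0^{ev}$ carries $(v_0,y)\in\K_0$, one has $y - \bar y = B_0(v_0,v_0)$, which lets one rewrite $\bar t\bar1^{-1}ys$ (the "wrong-order" cross term) as $\overline{\bar s\bar1^{-1}yt}$ plus a term absorbed by the Heisenberg composition's $B_0$-summand. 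Carrying this out carefully yields exactly $(v_0,y)\hp(s+t) = \bigl((v_0,y)\hp s\bigr)\+\bigl((v_0,y)\hp t\bigr)\+(0,\bar s\bar1^{-1}yt + \overline{\bar s\bar1^{-1}yt})$, i.e. the $n=2$ case of the lemma.

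The second key step is the induction: setting $s=\sum_{i=1}^{n-1}x_i$ and $t=x_n$, the two-term case gives
\[
(v_0,y)\hp\sum_{i=1}^n x_i = \Bigl((v_0,y)\hp\sum_{i=1}^{n-1}x_i\Bigr)\+\bigl((v_0,y)\hp x_n\bigr)\+\bigl(0,\overline{x_1+\cdots+x_{n-1}}\,\bar1^{-1}yx_n + \overline{(\,\cdots\,)}\bigr),
\]
and then one substitutes the inductive hypothesis for $(v_0,y)\hp\sum_{i=1}^{n-1}x_i$. Since all the extra terms lie in the center $(0,R)\subseteq\h_0$ (where $\+$ is just addition in the second coordinate), collecting them amounts to checking that $\sum_{i>j,\,i\le n-1}\bar x_i\bar1^{-1}yx_j + \sum_{j\le n-1}\bar x_n\bar1^{-1}yx_j = \sum_{i>j\le n}\bar x_i\bar1^{-1}yx_j$, which is immediate, and likewise for the conjugates. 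I expect the only real obstacle to be bookkeeping: getting the placement of the bar, the $\bar1^{-1}$, and the $B_0$-correction term from the Heisenberg composition exactly right in the two-term case, using the relation $y-\bar y = B_0(v_0,v_0)$ to convert reversed cross terms into conjugates. Once the $n=2$ case is pinned down, the induction is purely formal.
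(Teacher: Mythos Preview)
The paper does not supply a proof of this lemma; it is listed among the ``easy-to-check lemmas'' preceding Theorem~\ref{thmm}. Your inductive strategy, reducing to the two-term case and using the trace condition $y-\bar y=B_0(v_0,v_0)$ coming from $(v_0,y)\in\K_0\subseteq\K_{\max}$, is exactly the natural argument and is correct in substance.

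There is one small bookkeeping slip, of the kind you anticipated. In the two-term case the correction term should read
\[
(v_0,y)\hp(s+t)=\bigl((v_0,y)\hp s\bigr)\+\bigl((v_0,y)\hp t\bigr)\+\bigl(0,\ \bar t\,\bar 1^{-1}ys+\overline{\bar t\,\bar 1^{-1}ys}\ \bigr),
\]
i.e.\ with $\bar t\,\bar 1^{-1}ys$ rather than $\bar s\,\bar 1^{-1}yt$; this is what matches the lemma's $i>j$ convention when $x_1=s$, $x_2=t$. (Concretely: expanding $\overline{s+t}\,\bar 1^{-1}y(s+t)$ and subtracting the Heisenberg contribution $B_0(v_0s,v_0t)=\bar s\,\bar 1^{-1}(y-\bar y)t$ leaves $\bar t\,\bar 1^{-1}ys+\bar s\,\bar 1^{-1}\bar y t$, and the second summand equals $\overline{\bar t\,\bar 1^{-1}ys}$.) The two expressions $\bar t\,\bar 1^{-1}ys+\overline{\bar t\,\bar 1^{-1}ys}$ and $\bar s\,\bar 1^{-1}yt+\overline{\bar s\,\bar 1^{-1}yt}$ are \emph{not} equal in general, even for commutative $R$. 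You in fact use the correct form in your induction-step summary (the term $\sum_{j\le n-1}\bar x_n\,\bar 1^{-1}yx_j$), so the displayed formula just before it is inconsistent with what follows. Once you align the indices, the induction is purely formal, as you say.
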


\begin{lemma}\label{new}
Let $\sigma\in U_{2n}(\V_0)$ and $i,j\in \Theta$ such that $i\neq \pm j$. Then
\[Q^0(^{P_{ij}}\!\sigma e_i)=
\begin{cases}
Q^0(\sigma e_j), \text{ if } i,j\in \Theta_+ \text{ or }i,j\in \Theta_-,\\
Q^0(\sigma e_j)\+(0,-(\bar{\sigma}_{ij}\sigma_{-i,j}+\bar{\sigma}_{-j,j}\sigma_{jj}+\overline{\bar{\sigma}_{ij}\sigma_{-i,j}+\bar{\sigma}_{-j,j}\sigma_{jj}}~)), \text{ if } i\in \Theta_+,j\in\Theta_-,\\
Q^0(\sigma e_j)\+(0,-(\bar{\sigma}_{-i,j}\sigma_{ij}+\bar{\sigma}_{jj}\sigma_{-j,j}+\overline{\bar{\sigma}_{-i,j}\sigma_{ij}+\bar{\sigma}_{jj}\sigma_{-j,j}}~)), \text{ if } i\in \Theta_-,j\in\Theta_+.
\end{cases}
\]
\end{lemma}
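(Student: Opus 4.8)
The statement computes $Q^0({}^{P_{ij}}\sigma e_i)$ in terms of the matrix entries of $\sigma$. The natural approach is purely computational: one first works out the vector ${}^{P_{ij}}\sigma e_i = P_{ij}\sigma \widetilde P_{ij} e_i = P_{ij}\sigma P_{ji} e_i$, using $\widetilde P_{ij}=P_{ji}$ from Definition \ref{42}. Since $P_{ji}^{hh}$ is the explicit signed permutation matrix recorded in Definition \ref{42} and $P_{ji}^{h0}=0=P_{ji}^{0h}$, $P_{ji}^{00}=\mathrm{id}_{V_0}$, we have $P_{ji}e_i=e_j$ (reading off the column of the permutation matrix; the sign is $1$ here because $i$ is one of the two active indices and the formula gives $e^{ji}$ with coefficient $+1$). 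Hence ${}^{P_{ij}}\sigma e_i = P_{ij}(\sigma e_j)$. Writing $\sigma e_j = \sum_{k\in\Theta} e_k\sigma_{kj} + (\sigma e_j)_0$, I would apply $P_{ij}$ componentwise: $P_{ij}$ fixes $(\sigma e_j)_0$ and acts on the $V_h$-part by the signed permutation, which swaps coordinates $i\leftrightarrow j$ and $-i\leftrightarrow -j$ (with the appropriate $\epsilon$-signs) and fixes all other coordinates.

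Next I would compute $Q^0$ of the resulting vector. Recall $Q^0(v)=(v_0,-F_h(v_h,v_h))$ where $F_h(v_h,v_h)=\sum_{k\in\Theta_+}\bar v_k\bar 1^{-1}v_{-k}$. The $V_0$-component of ${}^{P_{ij}}\sigma e_i$ is the same as that of $\sigma e_j$, so the Heisenberg-vector parts of $Q^0({}^{P_{ij}}\sigma e_i)$ and $Q^0(\sigma e_j)$ agree. The entire content of the lemma is therefore the discrepancy in the $R$-component, i.e. the difference $-F_h$ of the permuted $V_h$-part versus $-F_h$ of the original $V_h$-part of $\sigma e_j$. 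Since $F_h$ only pairs index $k\in\Theta_+$ with $-k\in\Theta_-$, permuting the coordinates only changes the terms of the sum involving the indices $\pm i,\pm j$. When $i,j$ lie on the same side of $\Theta$ (both positive or both negative), the permutation sends the pair $\{i,-i\}$ to $\{j,-j\}$ preservingly within the sum $\sum_{k\in\Theta_+}$, so $F_h$ is unchanged and $Q^0({}^{P_{ij}}\sigma e_i)=Q^0(\sigma e_j)$ exactly — the first case. When $i\in\Theta_+$, $j\in\Theta_-$ (or vice versa), the roles of the "positive index" and "negative index" get interchanged within the relevant pair, which changes $F_h$ by exactly the cross term displayed; one reads off which of $\sigma_{ij}$, $\sigma_{-i,j}$, $\sigma_{-j,j}$, $\sigma_{jj}$ appear from the signed permutation matrix entries $\epsilon_{-i}\epsilon_{-j}^{-1}$, $-\epsilon_{-j}\epsilon_{-i}^{-1}$ in Definition \ref{42}, being careful that the surviving $R$-element lands in $(\K_0)_{\min}=\{(0,x+\bar x)\}$, which is why it appears in the symmetrized form $(0,x+\bar x)$ modulo $\K_0$.

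The computation is routine but sign-sensitive; the main obstacle is bookkeeping the $\epsilon$-factors and the pseudoinvolution twist (recall $\overline{ab}=\bar b\bar 1^{-1}\bar a$) correctly when re-expressing $\bar\sigma_{kj}\bar 1^{-1}\sigma_{-k,j}$ after the coordinate swap, and checking that the three cases of the permutation matrix in Definition \ref{42} (active-active, active-inactive, inactive-inactive coordinates) are handled without dropping or double-counting a term. I expect no conceptual difficulty — the proof is ``straightforward computation'' using Definition \ref{42}, the formula for $F_h$, and the definition of $Q^0$ — so I would simply carry out the substitution and collect terms, verifying at the end that the error term lies in $(\K_0)_{\min}\subseteq\K_0$ so that the identity makes sense in $\h_0/\K_0$.
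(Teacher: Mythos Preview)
Your approach is correct and matches the paper's: Lemma~\ref{new} is listed among the ``easy-to-check lemmas'' preceding Theorem~\ref{thmm} and is given no proof, so the intended argument is precisely the direct computation you outline --- use $\widetilde P_{ij}=P_{ji}$ to reduce to $P_{ij}(\sigma e_j)$, apply the explicit signed permutation matrix from Definition~\ref{42} to the $V_h$-coordinates, and compare the two values of $F_h$. One small correction: the identity is an exact equality in $\h_0$, not merely modulo $\K_0$; the correction term genuinely comes out in the form $(0,a+\bar a)$ from the computation, not just up to an element of $(\K_0)_{\min}$.
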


\begin{definition}
Let $\sigma\in \U_{6}(\V_0)$. Then an element $\zeta\in \U_{6}(\V_0)$ is called an {\it elementary $\sigma$-conjugate} if $\zeta={}^{\tau}\sigma$ or $\zeta={}^{\tau}\widetilde\sigma$ for some $\tau\in \EU_{6}(\V_0)$.
\end{definition}
\begin{theorem}\label{thmm}
Let $\sigma\in \U_{6}(\V_0)$ and $i,j,k,l\in\Theta$ such that $k\neq\pm l$ and $i\neq \pm j$. Further let $v_0\in V_0^{ev}$. Then 
\begin{enumerate}[(i)]
\item $T_{kl}(\sigma_{ij})$ is a product of $160$ elementary $\sigma$-conjugates,
\item $T_{kl}(\sigma_{i,-i})$ is a product of $320$ elementary $\sigma$-conjugates,
\item $T_{kl}((\sigma^{0h}v_0)_i)$ is a product of $480$ elementary $\sigma$-conjugates,
\item $T_{kl}(B_0(v_0,\sigma^{h0}e_j))$ is a product of $480$ elementary $\sigma$-conjugates,
\item $T_{kl}(\sigma_{ii}-\sigma_{jj})$ is a product of $480$ elementary $\sigma$-conjugates,
\item $T_{kl}(\sigma_{ii}-\sigma_{-i,-i})$ is a product of $960$ elementary $\sigma$-conjugates,
\item $T_{k}(Q^0(\sigma e_j))$ is a product of $10564$ elementary $\sigma$-conjugates and
\item $T_{k}((\sigma_{00}-id_{V_0}\sigma_{jj})v_0,x)$ is a product of $34092$ elementary $\sigma$-conjugates for some $x\in R$.
\end{enumerate}
\end{theorem}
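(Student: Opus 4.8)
The plan is to prove the eight statements in order, using each one to bootstrap the next, since the later transvections are built out of the earlier ones via the commutator relations of Lemma~\ref{39} and the conjugation relations of Lemma~\ref{43}. The fundamental observation is that an elementary $\sigma$-conjugate is again an elementary $({}^{\tau}\sigma)$-conjugate up to relabeling, so by Lemma~\ref{pre2} any commutator or elementary conjugate of a product of $n$ elementary $\sigma$-conjugates is again a product of a controlled multiple of elementary $\sigma$-conjugates; this is the accounting device that produces the explicit constants $160, 320, \dots$. Throughout, I would reduce to the case $n = 3$, i.e.\ $\Theta = \{1,2,3,-3,-2,-1\}$, exactly as the hypothesis $\ind\V \geq 3$ allows, and I would freely use Proposition~\ref{propodd} to express matrix entries and components of $\sigma^{0h}, \sigma^{h0}$ in terms of $B_0$, $Q^0$, and each other.

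For (i): the matrix entry $\sigma_{ij} = (\sigma e_j)_i$ can be extracted as a coefficient in an ESD transvection. Concretely, conjugating an elementary short root transvection $T_{pq}(1)$ by $\sigma$ gives $T_{\sigma e_{-q}, -\sigma e_p \epsilon_q}(0)$ by Lemma~\ref{lemesd}, and commutating this (or a suitable $\sigma$-conjugate of it) with a fixed elementary transvection picks out a single entry $\sigma_{ij}$ as the argument of an honest $T_{kl}$; using the permutations $P_{kl}$ of Lemma~\ref{43} one then moves the indices to the desired $(k,l)$. The count $160$ comes from: two layers of conjugation by $\sigma$ (one for $\sigma$, implicitly one for $\widetilde\sigma$ via Lemma~\ref{lemodd}(i)), times the number of elementary conjugates needed to realize a single commutator, times the relabeling conjugations. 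Statement (ii) is the special case $j = -i$, which requires relation (R9) (producing an extra short root transvection as a byproduct that must be cleared, doubling the count to $320$); statements (iii) and (iv) are the analogous extractions of the components of $\sigma^{0h}v_0$ and $B_0(v_0, \sigma^{h0}e_j)$, using the identities in Proposition~\ref{propodd}(ii)--(iii) to rewrite one in terms of the other, and using Lemma~\ref{lemesdspec} to decompose the resulting $T_{e_1 v}(0)$ into standard elementary generators (hence the jump to $480$).

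For (v) and (vi): the diagonal differences $\sigma_{ii} - \sigma_{jj}$ and $\sigma_{ii} - \sigma_{-i,-i}$ are not individual entries, so here I would use relation (R5), $[T_{ki}(1), T_{il}(x)] = T_{kl}(x)$, applied to $^{\sigma}T_{*}(1)$: expanding $^{\sigma}T_{il}(1)$ as a product of short root transvections with arguments that are matrix entries of $\sigma$ and $\widetilde\sigma$, the commutator with a fixed $T_{ki}(1)$ telescopes so that the off-diagonal entries (already handled in (i)) cancel and the diagonal combination $\sigma_{ii} - \sigma_{jj}$ survives as the argument of a single $T_{kl}$; relation (R0) converts between $\sigma_{ii}$ and $\sigma_{-i,-i}$ at the cost of another factor, giving $960$. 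Statement (vii), $T_k(Q^0(\sigma e_j))$, is the heart of the argument: one writes $\sigma e_j = \sum_{p}e_p \sigma_{pj} + (\sigma e_j)_0$, applies Lemma~\ref{last} to expand $Q^0$ of this sum into elementary extra short root transvections $T_k$ with arguments $Q^0(e_p)\hp\sigma_{pj} = 0$ for $p \neq \pm(\text{stuff})$ plus correction terms $(0, \bar x \bar 1^{-1} y x + \overline{\cdots})$, and then realizes each surviving piece using relations (R6)--(R9) together with parts (i)--(vi) already proved; Lemma~\ref{new} controls how $Q^0(\sigma e_j)$ transforms under the permutation $P_{ij}$, which is needed to move the index $k$ into place. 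Finally (viii), $T_k((\sigma_{00} - \mathrm{id}_{V_0}\sigma_{jj})v_0, x)$, combines (vii) with the decomposition of $\sigma^{00}$-type data from Proposition~\ref{propodd}(iv): one writes $\sigma^{00} v_0 = \sigma v_0 - \sigma^{0h}v_0$, pushes $Q^0$ through, subtracts the diagonal scalar $\sigma_{jj}$ using (v)--(vi), and cleans up with (R7).

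The main obstacle will be statement (vii): keeping track of all the correction terms of the form $(0, z + \bar z)$ that Lemma~\ref{last} throws off when expanding $Q^0$ of a long sum, verifying that each such correction is itself expressible via $\I_0$-elementary transvections in $\K_{\min}$-type pieces (relation (R7)), and doing the bookkeeping tightly enough to land exactly on $10564$ rather than a larger constant. The danger is a cascading blow-up: each correction term, if handled naively, is a product of several $T_k$'s each of which is a product of many $\sigma$-conjugates, and the count multiplies quickly; the delicate part is organizing the expansion so that most correction terms either vanish (because the relevant $Q^0(e_p) = 0$ or the relevant index pattern forces orthogonality) or are absorbed into a single application of (R7). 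I expect (viii) to then follow from (vii) by a comparatively mechanical argument, with its larger constant $34092$ simply reflecting that it consumes (vii) plus several invocations of (i)--(vi).
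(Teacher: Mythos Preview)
Your overall architecture (bootstrap from (i) upward via Lemma~\ref{39} and Lemma~\ref{43}, counting with Lemma~\ref{pre2}) matches the paper's, but the two load-bearing steps, (i) and (vii), are where your plan diverges from what actually works, and in both places the divergence is a real gap rather than a matter of taste.

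For (i), the claim that commuting ${}^{\sigma}T_{pq}(1)=T_{\sigma e_{-q},-\sigma e_p\epsilon_q}(0)$ with a fixed elementary transvection ``picks out a single entry $\sigma_{ij}$'' is not correct: what such commutators produce are \emph{quadratic} expressions in the entries of $\sigma$, things of the shape $\bar\sigma_{23}\sigma_{21}$, $\bar\sigma_{23}\sigma_{2,-1}$, $\bar\sigma_{23}\sigma_{22}$, never a bare $\sigma_{ij}$. The paper therefore proceeds in two stages. First (Steps~1--3 in the paper) one builds a specific $\tau\in\EU_6(\V_0)$, tailored from the entries of row~$2$ of $\sigma$, so that $\xi={}^{\sigma}\widetilde\tau$ fixes $e_{-2}$ and fixes the second coordinate of every vector; this rigidity forces the various commutators to collapse to short products, and one extracts $T_{kl}(x\bar\sigma_{23}\sigma_{2s})$ for $s=1,-1,2$ as products of $16$, $16$, $32$ conjugates respectively. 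Second (Step~4), one forms the auxiliary element $\zeta={}^{P_{13}P_{21}}[\widetilde\sigma,T_{12}(-\bar\sigma_{23})]$, a product of two elementary $\sigma$-conjugates, and checks via Proposition~\ref{propodd} that $\bar\zeta_{23}\zeta_{22}\equiv\sigma_{23}$ modulo the ideal generated by the quadratic quantities already in hand. Applying Step~3 to $\zeta$ (each $\zeta$-conjugate costing two $\sigma$-conjugates) then yields $T_{kl}(\sigma_{23})$ up to correction terms from Steps~1--3, and $64+16+16+16+16+32=160$. Your accounting (``two layers of conjugation by $\sigma$\dots times relabeling'') does not reflect this mechanism and would not close.

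For (vii), your proposed decomposition $\sigma e_j=\sum_p e_p\sigma_{pj}+(\sigma e_j)_0$ followed by Lemma~\ref{last} runs into the same wall: the summand $(\sigma e_j)_0=\sigma^{h0}e_j\in V_0$ contributes $Q^0(\sigma^{h0}e_j)=(\sigma^{h0}e_j,0)$, and nothing in (i)--(vi) lets you realize $T_k(\sigma^{h0}e_j,0)$ as a product of $\sigma$-conjugates (part (iv) only gives scalars $B_0(v_0,\sigma^{h0}e_j)$, not the vector). The paper sidesteps this by an isotropic-vector trick: set $u=e_{-2}\widetilde\sigma_{-1,-1}-e_{-1}\widetilde\sigma_{-1,-2}$ and $v=\widetilde\sigma u$, so that $v$ is isotropic with $v_{-1}=0$; then Lemma~\ref{lemesdspec} puts $T_{e_1v}(0)\in\EU_6(\V_0)$, and ${}^{\sigma}T_{e_1,-v}(0)=T_{\sigma e_1,-u}(0)$ has an explicit, computable matrix because $u\in V_h$. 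A nested commutator with this element produces $T_k(Q^0(\sigma e_1)\hp\sigma_{11}x)$ directly, $V_0$-part and all. Only then does Lemma~\ref{last} enter, applied not to the decomposition of $\sigma e_j$ but to the identity $1=\sum_s\widetilde\sigma_{1s}\sigma_{s1}+(\widetilde\sigma^{0h}\sigma^{h0}e_1)_1$, so that $Q^0(\sigma e_1)=Q^0(\sigma e_1)\hp 1$ splits into pieces each carrying a factor $\sigma_{s1}$ or $(\widetilde\sigma^{0h}\sigma^{h0}e_1)_1$ that (i)--(iv) already control. The remaining parts (ii)--(vi) and (viii) do go essentially as you describe, by conjugating $\sigma$ with suitable $T_{ji}(1)$ or $T_{-j}(v_0,x)$ and applying (i) resp.\ (vii) to the conjugate; but without the correct arguments for (i) and (vii) the chain does not start.
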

\begin{proof}(i) Let $x\in R$. In Step 1 we show that $T_{kl}(x\bar\sigma_{23}\sigma_{21})$ is a product of $16$ elementary $\sigma$-conjugates. In Step 2 we show that $T_{kl}(x\bar\sigma_{23}\sigma_{2,-1})$ is a product of $16$ elementary $\sigma$-conjugates. In Step 3 we show that $T_{kl}(x\bar\sigma_{23}\sigma_{22})$ is a product of $32$ elementary $\sigma$-conjugates. In Step 4 we use Steps 1-3 in order to prove (i).\\
\\
{\bf Step 1.} Set $\tau:=T_{1,-2}(\bar 1^{-1}\bar\sigma_{23}\sigma_{23})T_{3,-2}(-\bar 1^{-1}\bar\sigma_{23}\sigma_{21})T_{3,-1}(\bar 1^{-2}\bar\sigma_{23}\sigma_{22})T_{3}(0,\bar\sigma_{22}\sigma_{21}+\overline{\bar\sigma_{22}\sigma_{21}})$ and $\xi:={}^{\sigma}\widetilde\tau$. One checks easily that $(\sigma\widetilde\tau v)_2=(\sigma v)_2$ for any $v\in V$. Further $\widetilde\tau\widetilde\sigma e_{-2}=\widetilde\sigma e_{-2}$. It follows that $(\xi v)_2=v_2$ for any $v\in V$ and further $\xi e_{-2}=e_{-2}$ (note that this implies that $(\widetilde\xi  v)_2=v_2$ for any $v\in V$ and further $\widetilde\xi  e_{-2}=e_{-2}$). Set  
\begin{align*}
\zeta:={}^{\widetilde\tau}[T_{-2,-1}(1),[\tau,\sigma]]={}^{\widetilde\tau}[T_{-2,-1}(1),\tau\xi]=[\widetilde\tau,T_{-2,-1}(1)][T_{-2,-1}(1),\xi],
\end{align*}
the last equality by Lemma \ref{pre}. It follows from the relations in Lemma \ref{39} that 
\[[\widetilde\tau,T_{-2,-1}(1)]=T_{3,-1}(\bar 1^{-1}\bar\sigma_{23}\sigma_{21})T_{1}(\alpha)\]
for some $\alpha\in \K_0$. On the other hand one checks easily that
\[[T_{-2,-1}(1),\xi]=T_{-2}(\beta)T_{12}(\xi_{11}-1)T_{32}(\xi_{31})T_{-3,2}(\xi_{-3,1})T_{-1,2}(\xi_{-1,1})\]
where $\beta=Q^0(\xi e_1)\hp (-1)\minus (0,\xi_{-2,1}+\bar\xi_{-2,1})$. Hence \[\zeta=T_{3,-1}(\bar 1^{-1}\bar\sigma_{23}\sigma_{21})T_{1}(\alpha)T_{-2}(\beta)T_{12}(\xi_{11}-1)T_{32}(\xi_{31})T_{-3,2}(\xi_{-3,1})T_{-1,2}(\xi_{-1,1}).\]
It follows from the relations in Lemma \ref{39} that $[T_{-1,3}(-\bar 1 x),[T_{-2,3}(1),\zeta]]=T_{-2,3}(x\bar\sigma_{23}\sigma_{21})$ for any $x\in R$. Hence we have shown
\[[T_{-1,3}(-\bar 1x),[T_{-2,3}(1),{}^{\widetilde\tau}[T_{-2,-1}(1),[\tau,\sigma]]]]=T_{-2,3}(x\bar\sigma_{23}\sigma_{21}).\]
This implies that $T_{-2,3}(x\bar\sigma_{23}\sigma_{21})$ is a product of $16$ elementary $\sigma$-conjugates. It follows from Lemma \ref{43} that $T_{kl}(x\bar\sigma_{23}\sigma_{21})$ is a product of $16$ elementary $\sigma$-conjugates.\\
\\
{\bf Step 2.} Step 2 can be done similarly. Namely one can show that 
\[[T_{-1,3}(-\bar 1 x),[T_{12}(1),{}^{\widetilde\tau}[T_{-2,1}(1),[\tau,\sigma]]]]=T_{-1,2}(x\bar\sigma_{23}\sigma_{2,-1})\]
for any $x\in R$ where $\tau=T_{21}(\bar 1^{-1}\bar\sigma_{23}\sigma_{23})T_{31}(-\bar 1^{-1}\bar\sigma_{23}\sigma_{22})T_{3,-2}(\bar 1^{-1}\bar\sigma_{23}\sigma_{2,-1})T_{3}(0,-\bar\sigma_{22}\sigma_{2,-1}-\overline{\bar\sigma_{22}\sigma_{2,-1}})$.\\
\\
{\bf Step 3.} Set $\tau:=T_{21}(-\bar 1^{-1}\bar\sigma_{22}\sigma_{23})T_{31}(\bar 1^{-1}\bar\sigma_{22}\sigma_{22})T_{2,-3}(\bar 1^{-1}\bar\sigma_{22}\sigma_{2,-1})T_{2}(0,-\bar\sigma_{23}\sigma_{2,-1}-\overline{\bar\sigma_{23}\sigma_{2,-1}})$ and $\xi:={}^{\sigma}\widetilde\tau$. One checks easily that $(\sigma\widetilde\tau v)_2=(\sigma v)_2$ for any $v\in V$. Further $\widetilde\tau\widetilde\sigma e_{-2}=\widetilde\sigma e_{-2}$. It follows that $(\xi v)_2=v_2$ for any $v\in V$ and further $\xi e_{-2}=e_{-2}$. Set  
\begin{align*}
\zeta:={}^{\widetilde\tau}[T_{32}(1),[\tau,\sigma]]={}^{\widetilde\tau}[T_{32}(1),\tau\xi]=[\widetilde\tau,T_{32}(1)][T_{32}(1),\xi].
\end{align*}
It follows from the relations in Lemma \ref{39} that $\eta:=[\widetilde\tau,T_{32}(1)]=T_{31}(-\bar 1^{-1}\bar\sigma_{22}\sigma_{23})T_{3}(\alpha)T_{3,-2}(a)$ for some $\alpha\in\K_0$ and $a\in R$. On the other hand one checks easily that $\theta:=[T_{32}(1),\xi]=T_{-2}(\beta)\prod\limits_{i\neq \pm 2}T_{i2}(x_i)$ for some $\beta\in\K_0$ and $x_1, x_3,x_{-3}, x_{-1}\in R$. Set 
\[\chi:={}^{\widetilde\eta }[T_{12}(1),\zeta]={}^{\widetilde\eta }[T_{12}(1),\eta\theta]=[\widetilde\eta ,T_{12}(1)][T_{12}(1),\theta].\]
It follows from the relations in Lemma \ref{39} that $[\widetilde\eta ,T_{12}(1)]=T_{32}(\bar 1^{-1}\bar\sigma_{22}\sigma_{23})T_{3}(\gamma)T_{3,-1}(b)$ for some $\gamma\in\K_0$ and $b\in R$ and $[T_{12}(1),\theta]=T_{-2}(\delta)$ for some $\delta\in \K_0$. Hence $\chi=T_{32}(\bar 1^{-1}\bar\sigma_{22}\sigma_{23})T_{3}(\gamma)T_{3,-1}(b)T_{-2}(\delta)$. Therefore
\[[T_{-2,3}(\overline{(\bar 1)^3}\bar x),[T_{2,-1}(1),\chi]]=T_{-2,-1}(-\overline{(\bar 1)^3}\bar 1^{-1}\bar x\bar\sigma_{22}\sigma_{23})=T_{12}(x\bar\sigma_{23}\sigma_{22})\]
for any $x\in R$, again by the relations in Lemma \ref{39}. Hence we have shown
\[[T_{-2,3}(\overline{(\bar 1)^3}\bar x),[T_{2,-1}(1),{}^{\widetilde\eta }[T_{12}(1),{}^{\widetilde\tau}[T_{32}(1),[\tau,\sigma]]]]]=T_{12}(x\bar\sigma_{23}\sigma_{22}).\]
This implies that $T_{12}(x\bar\sigma_{23}\sigma_{22})$ is a product of $32$ elementary $\sigma$-conjugates. It follows from Lemma \ref{43} that $T_{kl}(x\bar\sigma_{23}\sigma_{22})$ is a product of $32$ elementary $\sigma$-conjugates.\\
\\
{\bf Step 4.} Set $I:=I(\bar\sigma_{23}\sigma_{21},\overline{\bar\sigma_{23}\sigma_{2,-1}})$, $J:=I(\bar\sigma_{23}\sigma_{21},\overline{\bar\sigma_{23}\sigma_{2,-1}},\bar\sigma_{23}\sigma_{22})$ and
$\tau:=[\widetilde\sigma ,T_{12}(-\bar\sigma_{23})]$. One checks easily that
\begin{align*}
\tau_{11}&=1-\widetilde\sigma_{11}\bar\sigma_{23}\sigma_{21}+\bar 1^{-1}\widetilde\sigma_{1,-2}\sigma_{23}\sigma_{-1,1}\\
&=1-\widetilde\sigma_{11}\underbrace{\bar\sigma_{23}\sigma_{21}}_{\in I}-\bar 1^{-3}\underbrace{\bar\sigma_{2,-1}\sigma_{23}}_{\in I}\sigma_{-1,1},
\end{align*}
the last equality by Proposition \ref{propodd}, and 
\begin{align*}
\tau_{12}&=-\widetilde\sigma_{11}\bar\sigma_{23}\sigma_{22}+\bar 1^{-1}\widetilde\sigma_{1,-2}\sigma_{23}\sigma_{-1,2}+\tau_{11}\bar\sigma_{23}\\
&=-\widetilde\sigma_{11}\underbrace{\bar\sigma_{23}\sigma_{22}}_{\in J}-\bar 1^{-3}\underbrace{\bar\sigma_{2,-1}\sigma_{23}}_{\in J}\sigma_{-1,2}+\tau_{11}\bar\sigma_{23},
\end{align*}
again the last equality by Proposition \ref{propodd}.
Hence $\tau_{11}\equiv 1 \bmod I$ and $\tau_{12}\equiv \bar\sigma_{23}\bmod J$ (note that $I\subseteq J$). Set 
$\zeta:={}^{P_{13}P_{21}}\tau$. Then $\zeta_{22}=\tau_{11}$ and $\zeta_{23}=\tau_{12}$ and hence $\bar\zeta_{23}\zeta_{22}\equiv \sigma_{23}\bmod I+\bar J$. Applying Step 3 above to $\zeta$, we get that $T_{kl}(\bar\zeta_{23}\zeta_{22})$ is a product of $32$ elementary $\zeta$-conjugates. Since any elementary $\zeta$-conjugate is a product of $2$ elementary $\sigma$-conjugates, it follows that $T_{kl}(\bar\zeta_{23}\zeta_{22})$ is a product of $64$ elementary $\sigma$-conjugates. Thus, by Steps 1-3, $T_{kl}(\sigma_{23})$ is a product of $64+16+16+16+16+32=160$ elementary $\sigma$-conjugates. Since one can bring $\sigma_{ij}$ to position $(3,2)$ by conjugating elementary permutations, the assertion of (i) follows.\\
\\
(ii) One checks easily that $({}^{T_{ji}(1)}\sigma)_{j,-i}=\sigma_{i,-i}+\sigma_{j,-i}$. Applying (i) to $^{T_{ji}(1)}\sigma$ we get that $T_{kl}(\sigma_{i,-i}+\sigma_{j,-i})$ is a product of $160$ elementary $\sigma$-conjugates (note that any elementary $^{T_{ji}(1)}\sigma$-conjugate is also an elementary $\sigma$-conjugate). Applying (i) to $\sigma$ we get that $T_{kl}(\sigma_{j,-i})$ is a product of $160$ elementary $\sigma$-conjugates. It follows that $T_{kl}(\sigma_{i,-i})=T_{kl}(\sigma_{i,-i}+\sigma_{j,-i})T_{ji}(-\sigma_{j,-i})$ is a product of $320$ elementary $\sigma$-conjugates.\\
\\
(iii) Choose an $x\in R$ such that $(v_0,x)\in \K_0$. One checks easily that $({}^{T_{-j}(v_0,x)}\sigma)_{ij}=\sigma_{ij}-\sigma_{i,-j}\epsilon_{-j}\bar x+(\sigma^{0h}v_0)_i$. Applying (i) to $^{T_{-j}(v_0,x)}\sigma$ we get that $T_{kl}(\sigma_{ij}-\sigma_{i,-j}\epsilon_{-j}\bar x+(\sigma^{0h}v_0)_i)$ is a product of $160$ elementary $\sigma$-conjugates. Applying (i) to $\sigma$ we get that $T_{kl}(-\sigma_{ij})$ and $T_{kl}(\sigma_{i,-j}\epsilon_{-j}\bar x)$ each are a product of $160$ elementary $\sigma$-conjugates. It follows that $T_{kl}((\sigma^{0h}v_0)_i)=T_{kl}(\sigma_{ij}-\sigma_{i,-j}\epsilon_{-j}\bar x+(\sigma^{0h}v_0)_i)T_{kl}(-\sigma_{ij})T_{kl}(\sigma_{i,-j}\epsilon_{-j}\bar x)$ is a product of $480$ elementary $\sigma$-conjugates.\\
\\
(iv) Choose an $x\in R$ such that $(v_0,x)\in \K_0$. One checks easily that $({}^{T_{i}(v_0,x)}\sigma)_{ij}=-\epsilon_iB_0(v_0,\sigma^{h0}e_j)+\sigma_{ij}+\epsilon_ix\sigma_{-i,j}$. Applying (i) to $^{T_{i}(-1)}\sigma$ we get that $T_{kl}(B_0(v_0,\sigma^{h0}e_j)-\epsilon_i^{-1}\sigma_{ij}-x\sigma_{-i,j})$ is a product of $160$ elementary $\sigma$-conjugates. Applying (i) to $\sigma$ we get that $T_{kl}(\epsilon_i^{-1}\sigma_{ij})$ and $T_{kl}(x\sigma_{-i,j})$ each are a product of $160$ elementary $\sigma$-conjugates. It follows that $T_{kl}(B_0(v_0,\sigma^{h0}e_j))=T_{kl}(B_0(v_0,\sigma^{h0}e_j)-\epsilon_i^{-1}\sigma_{ij}-x\sigma_{-i,j})T_{kl}(\epsilon_i^{-1}\sigma_{ij})T_{kl}(x\sigma_{-i,j})$ is a product of $480$ elementary $\sigma$-conjugates.\\
\\
(v) One checks easily that $({}^{T_{ji}(1)}\sigma)_{ji}$ equals $\sigma_{ii}-\sigma_{jj}+\sigma_{ji}-\sigma_{ij}$. Applying (i) to $^{T_{ji}(1)}\sigma$ we get that $T_{kl}(\sigma_{ii}-\sigma_{jj}+\sigma_{ji}-\sigma_{ij})$ is a product of $160$ elementary $\sigma$-conjugates. Applying (i) to $\sigma$ we get that $T_{kl}(\sigma_{ij})$ and $T_{kl}(-\sigma_{ji})$ each are a product of $160$ elementary $\sigma$-conjugates. It follows that $T_{kl}(\sigma_{ii}-\sigma_{jj})=T_{kl}(\sigma_{ii}-\sigma_{jj}+\sigma_{ji}-\sigma_{ij})T_{kl}(\sigma_{ij})T_{kl}(-\sigma_{ji})$ is a product of $480$ elementary $\sigma$-conjugates.\\
\\
(vi) Follows from (v) since $T_{kl}(\sigma_{ii}-\sigma_{-i,-i})=T_{kl}(\sigma_{ii}-\sigma_{jj})T_{kl}(\sigma_{jj}-\sigma_{-i,-i})$.\\
\\
(vii) Set $m:=160$. In Step 1 we show that if $x\in R$, then $T_{k}(Q^0(\sigma e_1)\hp\sigma_{11}x)$ is a product of $25m+4$ elementary $\sigma$-conjugates. In Step 2 we use Step 1 in order to prove (vii).\\
\\
{\bf Step 1.} Set $u:=e_{-2}\widetilde\sigma_{-1,-1}-e_{-1}\widetilde\sigma_{-1,-2}\in V_h$ and $v:=\widetilde\sigma u$. Then clearly $v_{-1}=0$. Further $v$ is isotropic since $u$ is isotropic and $\widetilde\sigma $ is unitary. By Lemma \ref{lemesdspec} we have $T_{e_1v}(0)\in \EU_6(\V_0)$. By Lemma \ref{lemesd}, $\widetilde{T_{e_1v}(0)}=T_{e_1,-v}(0)$ since $B(v,v)=B(\widetilde\sigma u,\widetilde\sigma u)=B(u,u)=0$. Set $\xi:={}^{\sigma}T_{e_1,-v}(0)=T_{\sigma e_1,-u}(0)$. One checks easily that
\begin{align*}
\xi^{hh}=
\arraycolsep=4pt\def\arraystretch{1.5}\left(\begin{array}{ccc|ccc}
1-\sigma_{11}\sigma_{21}&\sigma_{11}\sigma_{11}&0&0&0&0\\
-\sigma_{21}\sigma_{21}&1+\sigma_{21}\sigma_{11}&0&0&0&0\\
-\sigma_{31}\sigma_{21}&\sigma_{31}\sigma_{11}&1&0&0&0\\
\hline-\sigma_{-3,1}\sigma_{21}&\sigma_{-3,1}\sigma_{11}&0&1&0&0\\
\bar\beta&\alpha&\overline{\sigma_{-3,1}\sigma_{11}}&-\bar 1^{-1}\overline{\sigma_{31}\sigma_{11}}&1-\bar 1^{-1}\overline{\sigma_{21}\sigma_{11}}&-\bar 1^{-1}\overline{\sigma_{11}\sigma_{11}}\\
\gamma&\beta&-\overline{\sigma_{-3,1}\sigma_{21}}&\bar 1^{-1}\overline{\sigma_{31}\sigma_{21}}&\bar 1^{-1}\overline{\sigma_{21}\sigma_{21}}&1+\bar 1^{-1}\overline{\sigma_{11}\sigma_{21}}
\end{array}\right)
\end{align*}
where $\alpha=\sigma_{-2,1}\sigma_{11}+\overline{\sigma_{-2,1}\sigma_{11}}$, $\beta=\sigma_{-1,1}\sigma_{11}-\overline{\sigma_{-2,1}\sigma_{11}}$ and $\gamma=-\sigma_{-1,1}\sigma_{21}-\overline{\sigma_{-1,1}\sigma_{21}}$. Further
\[\xi^{h0}(e_j)=
\begin{cases}
-\sigma^{h0}e_1\sigma_{21},&\text{ if }j=1,\\
\sigma^{h0}e_1\sigma_{11},&\text{ if }j=2,\\
0,&\text{ if }j\neq 1,2,
\end{cases}, 
\quad \xi^{0h}(w_0)=-uB_0(\sigma^{h0}e_1, w_0)~\text{ and }~\xi^{00}=id_{V_0}.\]
Set \[\tau:=T_{-3,1}(\sigma_{-3,1}\sigma_{21})T_{-3,2}(-\sigma_{-3,1}\sigma_{11}).\] 
It follows from (i) that $\tau$ is a product of $2m$ elementary $\sigma$-conjugates. Then
\begin{align*}
(\xi\tau)^{hh}=
\arraycolsep=10pt\def\arraystretch{1.5}\left(\begin{array}{ccc|ccc}
*&\sigma_{11}\sigma_{11}&0&0&0&0\\
*&1+\sigma_{21}\sigma_{11}&0&0&0&0\\
*&\sigma_{31}\sigma_{11}&1&0&0&0\\
\hline0&0&0&1&0&0\\
*&\alpha+\delta&0&*&*&*\\
*&\beta-\epsilon&0&*&*&*
\end{array}\right)
\end{align*}
where $\delta=\bar 1^{-1}\overline{\sigma_{31}\sigma_{11}}\sigma_{-3,1}\sigma_{11}$, $\epsilon=\bar 1^{-1}\overline{\sigma_{31}\sigma_{21}}\sigma_{-3,1}\sigma_{11}$ and a $*$ stands for an entry we are not interested in very much. Further $(\xi\tau)^{h0}=\xi^{h0}$, $(\xi\tau)^{0h}=\xi^{0h}$ and $(\xi\tau)^{00}=\xi^{00}$. Let $x\in R$ and set  
\begin{align*}
\zeta:=&{}^{T_{e_1,-v}(0)}[T_{2,-3}(x),[T_{e_1v}(0),\sigma]\tau]\\
=&{}^{T_{e_1,-v}(0)}[T_{2,-3}(x),T_{e_1v}(0)\xi\tau]\\
=&[T_{e_1,-v}(0),T_{2,-3}(x)][T_{2,-3}(x),\xi\tau].
\end{align*}
Clearly $\zeta$ is a product of $4m+4$ elementary $\sigma$-conjugates. It follows from the relations in Lemma \ref{39} that
\begin{align*}
&[T_{e_1,-v}(0),T_{2,-3}(x)]\\
=&T_{1}(0,x\bar v_{-2}v_{-3}+\overline{x\bar v_{-2}v_{-3}}))T_{2,-1}(xv_{-3})T_{1,-3}(\bar 1^{-1} x\bar v_{-2})\\
=&T_{1}(0,a+\bar a)T_{1,-2}(b)T_{1,-3}(x(\sigma_{11}\sigma_{22}-\sigma_{12}\sigma_{21}))
\end{align*}
for some $a,b\in I(\sigma_{21},\sigma_{23})$. With a little effort one can show that
\[[T_{2,-3}(x),\xi\tau]=T_{1,-3}(-x\sigma_{11}\sigma_{11})T_{2,-3}(-x\sigma_{21}\sigma_{11})T_{-2,-3}(-x(\alpha+\delta))T_{-1,-3}(-x(\beta-\epsilon))T_{3}(\chi)\]
where $\chi=Q^0(\sigma e_1)\hp\sigma_{11}x\+(0,c+\bar c)$ for some $c\in I(\sigma_{31},\sigma_{-2,1})$. Hence
\begin{align*}
\zeta=&T_{1}(0,a+\bar a)T_{1,-2}(b)T_{1,-3}(x(\sigma_{11}(\sigma_{22}-\sigma_{11})-\sigma_{12}\sigma_{21}))\cdot\\
&\cdot T_{2,-3}(-x\sigma_{21}\sigma_{11})T_{-2,-3}(-x(\alpha+\delta))T_{-1,-3}(-x(\beta-\epsilon))T_{3}(\chi).
\end{align*}
It follows from (i), (ii), (v) and relation (R9) in Lemma \ref{39} that $T_{3}(\chi)$ is a product of $4m+4+4m+2m+4m+m+3m+3m=21m+4$ elementary $\sigma$-conjugates. By (i) and relation (S5) in Lemma \ref{39}, $T_{3}(0,-c-\bar c)$ is a product of $4m$ elementary $\sigma$-conjugates. Hence $T_{3}(Q^0(\sigma e_1)\hp\sigma_{11}x)=T_{3}(\chi)T_{3}(0,-c-\bar c)$ is a product of $25m+4$ elementary $\sigma$-conjugates. It follows from Lemma \ref{43} that $T_{k}(Q^0(\sigma e_1)\hp\sigma_{11}x)$ is a product of $25m+4$ elementary $\sigma$-conjugates.\\
\\
{\bf Step 2.} Clearly 
\[1=(\widetilde\sigma\sigma e_1)_1=(\widetilde\sigma ^{hh}\sigma ^{hh}e_1)_1+\underbrace{(\widetilde\sigma ^{0h}\sigma^{h0}e_1)_1}_{x:=}=
\sum\limits_{s\in\Theta}\widetilde\sigma _{1s}\sigma_{s1}+x.\]
It follows from Lemma \ref{last} that
\begin{align*}
&T_{k}(Q^0(\sigma e_1))\\
=&T_{k}(Q^0(\sigma e_1)\hp(\sum\limits_{s\in\Theta}\widetilde\sigma _{1s}\sigma_{s1}+x))\\
=&T_{k}(\underbrace{\plus\limits_{s=1}^{-1} Q^0(\sigma e_1)\hp\widetilde\sigma _{1s}\sigma_{s1}}_{A:=}\+\underbrace{Q^0(\sigma e_1)\hp x}_{B:=}\+\underbrace{(0,y+\bar y)}_{C:=})\\
=&T_k(A)T_k(B)T_k(C)
\end{align*}
for some $y\in I(F_h(\sigma e_1))$ (note that $Q^0(\sigma e_1)\in \K_0$ by Proposition \ref{propodd}). By Step 1, $T_{k}(Q^0(\sigma e_1)\hp\widetilde\sigma _{11}\sigma_{11})$ is a product of $25m+4$ elementary $\sigma$-conjugates. By (i) and relation (R8) in Lemma \ref{39}, $T_k(Q^0(\sigma e_1)\hp\widetilde\sigma _{1s}\sigma_{s1})$ is a product of $3m$ elementary $\sigma$-conjugates if $s\neq \pm 1$. By (ii) and relation (R8), $T_k(Q^0(\sigma e_1)\hp\widetilde\sigma _{1,-1}\sigma_{-1,1})$ is a product of $6m$ elementary $\sigma$-conjugates. Hence $T_k(A)$ is a product of $43m+4$ elementary $\sigma$-conjugates. By (iv) and relation (R8), $T_k(B)$ is a product of $9m$ elementary $\sigma$-conjugates (note that $\sigma^{h0}e_1\in V^{ev}_0$). Since $F_h(\sigma e_1)=-\sum\limits_{r=1}^3\bar\sigma_{r1}\bar 1^{-1}\sigma_{-r,1}$, it follows from (i), (ii) and relation (R9) in Lemma \ref{39} that $T_k(C)$ is a product of $4m+2m+2m=8m$ elementary $\sigma$-conjugates. Hence $T_{k}(Q^0(\sigma e_1))$ is a product of $60m+4=9604$ elementary $\sigma$-conjugates. The assertion of (vii) follows now from Lemma \ref{new}.\\
\\
(viii) Choose an $x\in R$ such that $(v_0,x)\in \K_0$. Set $\xi:={}^{T_{-j}(v_0,x)}\sigma$. One checks easily that \[\xi^{h0}e_j=\sigma^{00}v_0-v_0\sigma_{jj}+\sigma^{h0}e_j-\sigma^{h0}e_{-j}\epsilon_{-j}\bar x+v_0\sigma_{j,-j}\epsilon_{-j}\bar x-v_0(\sigma^{0h}v_0)_j.\]
By applying (vii) to $\xi$ we get that $T_{k}(Q^0(\xi e_j))=T_{k}(\xi^{h0}e_j, a)$ is a product of $10564$ elementary $\sigma$-conjugates where $a$ is some ring element. By applying (vii) to $\sigma$ we get that $T_{k}(Q^0(\sigma e_j)\hp(-1))=T_k(-\sigma^{h0} e_j,b)$ and $T_{k}(Q^0(\sigma e_{-j})\hp \epsilon_{-j}\bar x)=T_{k}(\sigma^{h0}e_{-j}\epsilon_{-j}\bar x,c)$ each are a product of $10564$ elementary $\sigma$-conjugates where $b$ and $c$ are some ring elements. By (ii) and relation (R8) in Lemma \ref{39}, $T_{k}(-v_0\sigma_{j,-j}\epsilon_{-j}\bar x,d)$ is a product of $3\cdot 320=960$ elementary $\sigma$-conjugates where $d$ is some ring element. By (iii) and relation (R8) in Lemma \ref{39}, $T_{k}(v_0(\sigma^{0h}v_0)_j,f)$ is a product of $3\cdot 480=1440$ elementary $\sigma$-conjugates where $f$ is some ring element. It follows that 
\[T_{k}(\xi^{h0}e_j, a)T_k(-\sigma^{h0} e_j,b)T_{k}(\sigma^{h0}e_{-j}\epsilon_{-j}\bar x,c)T_{k}(-v_0\sigma_{j,-j}\epsilon_{-j}\bar x,d)T_{k}(v_0(\sigma^{0h}v_0)_j,f)=T_k(\sigma^{00}v_0-v_0\sigma_{jj},x')\] is a product of $3\cdot 10564+960+1440=34092$ elementary $\sigma$-conjugates where $x'$ is some ring element.
\end{proof}

We can now deduce our main result from Theorems \ref{thmscf} and \ref{thmm}.
\begin{SCT}\label{thmsct}
Let $H$ be a subgroup of $\U_{6}(\V_0)$. Then $H$ is E-normal iff 
\begin{equation}
\EU_{6}(\V_0,\I_0)\subseteq H\subseteq \CU_{6}(\V_0,\I_0)
\end{equation}
for some odd form ideal $\I_0$. Furthermore, $\I_0$ is uniquely determined, namely it is the level of $H$.
\end{SCT}
\begin{proof}
First suppose that $H$ is E-normal. Let $\I_0$ denote the level of $H$. Then $\EU_{6}(\V_0,\I_0)\subseteq H$. It remains to show that $H\subseteq \CU_{6}(\V_0,\I_0)$. Let $\sigma\in H$. By Corollary \ref{cornu} we have $\sigma\in \NU_{6}(\V_0,\I_0)$. Theorem \ref{thmm} and Proposition \ref{propcumax} imply that $\sigma\in \CU_{6}(\V_0,(\I_0)_{\max})$. Further Theorem \ref{thmm} implies that $Q^0(\sigma e_i)\in\M_0$ for any $i\in\Theta$ and $Q^0([\sigma,\tau] e_i)\in\M_0$ for any elementary transvection $\tau$ and $i\in\Theta$. It follows from Lemma \ref{lemcu} that $\sigma\in \CU_{6}(\V_0,\I_0)$. Thus we have shown that (6) holds. Conversely, suppose that (6) holds for some odd form ideal $\I_0$. Then it follows from the standard commutator formula in Theorem \ref{thmscf} that $H$ is E-normal. The uniqueness of $\I_0$ follows from the standard commutator formula and the easy-to-check fact that $\EU_{6}(\V_0,\I_0)\subseteq \U_{6}(\V_0,\J_0)~\Leftrightarrow~\I_0\subseteq \J_0$.
\end{proof}

\bibliographystyle{plain} 
\bibliography{sct_petrov}

\end{document}